\newcommand{\R}{{\mathbb R}}       % Field of real numbers
\newcommand{\Z}{{\mathbb Z}}       % Ring of integer numbers
\newcommand{\cM}{{\mathcal M}}
\newcommand{\cA}{{\mathcal A}}
\newcommand{\cS}{{\mathcal S}}
\newcommand{\diam}{\mathop{\rm diam}}
\newcommand{\dist}{{\rm dist}}
\newcommand{\supp}{\operatorname{supp}}
\newcommand{\wh}{\widehat}
\newcommand{\meas}{\mathrm{meas}}
\newcommand{\rF}{\mathrm{F}}
\def\XXint#1#2#3{{\setbox0=\hbox{$#1{#2#3}{\int}$ }
\vcenter{\hbox{$#2#3$ }}\kern-.58\wd0}}
\newtheorem{theorem}{Theorem}[section]
\newtheorem{lemma}[theorem]{Lemma}
\newtheorem{remark}[theorem]{Remark}
\newtheorem{corollary}[theorem]{Corollary}
\newtheorem{proposition}[theorem]{Proposition}
\newtheorem*{lemma*}{Lemma}
\newtheorem*{theorem*}{Theorem}
\theoremstyle{definition}
\theoremstyle{remark}
\newtheorem{rem}[theorem]{\bf Remark}
\numberwithin{equation}{section}
\newcommand{\brem}{\begin{rem}}
\newcommand{\erem}{\end{rem}}
\def\@tocline#1#2#3#4#5#6#7{\relax
  \ifnum #1>\c@tocdepth % then omit
  \else
    \par \addpenalty\@secpenalty\addvspace{#2}%
    \begingroup \hyphenpenalty\@M
    \@ifempty{#4}{%
      \@tempdima\csname r@tocindent\number#1\endcsname\relax
    }{%
      \@tempdima#4\relax
    }%
    \parindent\z@ \leftskip#3\relax \advance\leftskip\@tempdima\relax
    \rightskip\@pnumwidth plus4em \parfillskip-\@pnumwidth
    #5\leavevmode\hskip-\@tempdima
      \ifcase #1
       \or\or \hskip 1em \or \hskip 2em \else \hskip 3em \fi%
      #6\nobreak\relax
    \dotfill\hbox to\@pnumwidth{\@tocpagenum{#7}}\par
    \nobreak
    \endgroup
  \fi}
\begin{document}

\title[$L^p$ improving property and maximal multilinear averages]{$L^p$ improving properties and maximal estimates for certain multilinear averaging operators}

\author[C. Cho]{Chu-hee Cho}

\address{Chuhee Cho
\\
Research Institute of Mathematics
\\
Seoul National University
\\
08826 Gwanak-ro 1, Seoul, Republic of Korea} \email{akilus@snu.ac.kr}

\author[J. B. Lee]{Jin Bong Lee}

\address{Jin Bong Lee
\\
Research Institute of Mathematics
\\
Seoul National University
\\
08826 Gwanak-ro 1, Seoul, Republic of Korea} \email{jinblee@snu.ac.kr}

\author[K. Shuin]{Kalachand Shuin}

\address{Kalachand Shuin
\\
Department of Mathematical Sciences
\\
Seoul National University
\\
08826 Gwanak-ro 1, Seoul, Republic of Korea} \email{kcshuin21@snu.ac.kr}

\begin{abstract}
In this article we focus on $L^{p}$ estimates for two types of multilinear lacunary maximal averages over hypersurfaces with curvature conditions. 
Moreover, we give a different proof for the bilinear lacunary spherical maximal functions.
To obtain our results, we make use of the $L^1$-improving estimates of multilinear averaging operators.
We also obtain $L^p$-improving estimates for certain multilinear averages by means of the nonlinear Brascamp-Lieb inequality.
\end{abstract}

\subjclass[2020]{Primary 42B25, 47H60}
\keywords{Multilinear lacunary maximal operators, $L^p$ improving, Nonlinear Brascamp-Lieb inequality}

\maketitle

\tableofcontents

\section{Introduction}

Let $\mathcal{S}$ be a compact and smooth hypersurface contained in a unit ball $\mathbb{B}^{d}(0,1)$ with $\kappa$ non-vanishing principal curvatures,
and $\Theta_j$ be rotation matrices in $\mathbf{M}_{d,d}(\R)$ for $j=1,2, \dots, m$.
We assume that $\{\Theta_j\}_{j=1}^m$ is mutually linearly independent. 
Then for  $f_{1},f_{2},\dots,f_{m}\in \mathscr{S}(\mathbb{R}^{d})$, we define  
\begin{align}\
  \mathcal{A}_{\mathcal{S}}^{\Theta}(\rF)(x) &:= \int_{\mathcal{S}} \prod_{j=1}^m f_j(x+\Theta_j y)~\mathrm{d}\sigma_{\mathcal{S}}(y)\label{defn-mlao},
  \end{align}
where $F=(f_1,f_2,\cdots,f_m)$ and $d\sigma_{\mathcal{S}}$ is the normalized surface measure on $\mathcal{S}$. 
We also consider another $m$-linear averaging operator defined by 
\begin{align}
  \textsl{A}_{\Sigma}(\rF)(x) &:= \int_{\Sigma} \prod_{j=1}^m f_j (x + y_j)~\mathrm{d}\sigma_{\Sigma}(\textsl{y}),\,\, (y_1, \dots, y_m) = \textsl{y} \in \R^{md},\label{defn-mlao-cM}
\end{align}
where $\Sigma$ is a compact $(md-1)$ dimensional smooth hypersurface contained in a unit ball $\mathbb{B}^{md}(0,1)$ with $\kappa$ non-vanishing principal curvatures. 
Note that $\kappa$ arising in \eqref{defn-mlao} satisfies $1\leq \kappa\leq d-1$, while $\kappa$ in \eqref{defn-mlao-cM} is $1\leq \kappa\leq md-1$.
Moreover, we are interested in the following lacunary maximal operators associated with \eqref{defn-mlao} and \eqref{defn-mlao-cM}: 
\begin{align}
	\mathcal{M}^{\Theta}_{\mathcal{S}}(\mathrm{F})(x) &= \sup_{\ell\in\Z} \Big|\int_{\mathcal{S}}\prod_{j=1}^m f_j(x - 2^\ell \Theta_{j}y)~~\mathrm{d}\sigma_{\mathcal{S}}(y) \Big|,\label{maximal avg 1}\\
	\mathfrak{M}_{\Sigma}(\mathrm{F})(x) &= \sup_{\ell\in\Z} \Big|\int_{\Sigma}\prod_{j=1}^m f_j(x - 2^\ell y_j)~~\mathrm{d}\sigma_\Sigma(\textsl{y}) \Big|.\label{maximal avg 2}
\end{align}
The purpose of this article is to prove $L^{p}$-improving estimates of multilinear averaging operators defined by \eqref{defn-mlao} and \eqref{defn-mlao-cM}.
Further, using this $L^{p}$-improving estimates we show $L^{p_{1}}\times L^{p_{2}}\times\cdots\times L^{p_{m}}\rightarrow L^{p}$ boundedness for $1/p=\sum^{m}_{j=1}1/p_j$ of the multi-(sub)linear lacunary maximal functions $\cM_\cS^\Theta$ and $\mathfrak{M}_\Sigma$.

Averaging operators given in \eqref{defn-mlao} and \eqref{defn-mlao-cM} and related maximal operators arise in many studies in multilinear harmonic analysis. 
Since Coifman and Meyer \cite{CoifmanMeyer} opened the path of multilinear harmonic analysis in 1975,
there have been significant developments in the area of multilinear harmonic analysis over the last few decades.
Among those achievements, we introduce works of Lacey and Thiele \cite{LaceyTheile1, LaceyTheile2} in which they proved $L^{p}$-boundedness of the bilinear Hilbert transform given as
$$
	BHT_\alpha(f,g)(x) := p.v. \int_{-\infty}^\infty f(x-t)g(x-\alpha t)\frac{\mathrm{d}t}{t},\quad \alpha\not=0,1.
$$
Their seminal work settled the long standing conjecture of Calder\'{o}n. 
Later, Lacey \cite{Lacey2} studied $L^{p}$- boundedness of bilinear maximal operator  
$$
	M_\alpha(f,g)(x) := \sup_{t>0} \frac1{2t}\int_{-t}^t |f(x-y) g(x-\alpha y)|~\mathrm{d}y,\quad \alpha\not=0,1,
$$
which is related to the bilinear Hilbert transform.
One may regard averaging operators $\cA_\cS^\Theta$ as a generalization of $M_\alpha$ without the supremum because the condition $\alpha\not=0,1$ corresponds to the linearly independent condition of $\{\Theta_j\}$.

On the other hand, $\textsl{A}_\Sigma$ given in \eqref{defn-mlao-cM} is a direct analogue of a spherical averages $A^{t}_{\mathbb{S}^{d-1}}f(x)$ for $t=1$, which is defined by
$$
	A^{t}_{\mathbb{S}^{d-1}}f(x) := \int_{\mathbb{S}^{d-1}}f(x-ty)~\mathrm{d}\sigma(y).
$$
Thus we write $\textsl{A}_{\mathbb{S}^{md-1}}(\rF)(x) =  A^{1}_{\mathbb{S}^{md-1}}(f_1 \otimes\cdots\otimes f_m) (x,\dots, x)$.
For studies on $\textsl{A}_{\mathbb{S}^{md-1}}(\rF)$, we recommend \cite{Oberlin, Bak, SaurabhShuin, Dosidis} and references therein.
In the literature, $A^{t}_{\mathbb{S}^{d-1}}$ have been extensively studied in terms of maximal operators.
For the (sub)linear spherical maximal operator $M^{*}_{\mathbb{S}^{d-1}}$ defined by 
$$
	M^{*}_{\mathbb{S}^{d-1}}f(x)= \sup_{t>0}|A^{t}_{\mathbb{S}^{d-1}}f(x)| := \sup_{t>0}\Big|\int_{\mathbb{S}^{d-1}}f(x-ty)~\mathrm{d}\sigma(y)\Big|
$$
with $d\sigma$ is the normalized surface measure on the sphere $\mathbb{S}^{d-1}$, Stein \cite{Stein1976} proved that for $d\geq3$, the spherical maximal operator $M^{*}_{\mathbb{S}^{d-1}}$ is bounded in $L^{p}$, if and only if $p>\frac{d}{d-1}$ in 1976. 
Later, %in 1986 
Bourgain \cite{Bourgain1986} obtained $L^{p}$ boundedness of $M^{*}_{\mathbb{S}^{1}}$ for $p>2$. 
Those restricted boundedness of $M^{*}_{\mathbb{S}^{d-1}}$ can be improved if one considers the lacunary spherical maximal operator, which is given by 
$M_{\mathbb{S}^{d-1}}f(x):=\sup_{j\in\mathbb{Z}}|A_{\mathbb{S}^{d-1}}^{2^{j}}f(x)|$.
Calder\'{o}n  \cite{Calderon} proved $L^{p}$ estimates of the operator $M_{\mathbb{S}^{d-1}}$ for $1<p 
\leq\infty$ and $d\geq2$. 
After then, Seeger and Wright \cite{Seegerwright} showed  $L^{p}$ estimates of general lacunary maximal operators $M_{\mathcal{S}}$ for $1<p 
\leq\infty$, when  the Fourier transform of the surface measure $\sigma$ of $\mathcal{S}$ satisfies $|\hat{\sigma}(\xi)|\lesssim |\xi|^{-\epsilon}$, for any $\epsilon>0$.
There are also $L^{p}- L^{q}$ estimates for  $p\leq q$ (we call this $L^{p}$-improving estimates) of the spherical average $A^{1}_{\mathbb{S}^{d-1}}$ \cite{Littman,Stichartz}.  
%In [sogge schlag, s Lee], one can check that the $L^p$-improving phenomena occurs for a local maximal operator given by $\sup_{1<t<2} |A^{t}_{\mathbb{S}^{d-1}}f(x)|$.

Lacey \cite{Lacey} used the $L^{p}$-improving estimates of spherical averages to prove sparse domination of the corresponding lacunary and full spherical maximal functions. 
It is well known that sparse domination of an operator implies vector valued boundedness and weighted boundedness of that operator with respect to Muckenhoupt $A_{p}$ weights \cite{Nieraeth, Lerner1}. 
This idea has been extensively used to obtain sparse domination of several linear and sub-linear operators in the field of Harmonic analysis (See \cite{Beltran1}). 
The idea of Lacey \cite{Lacey}, together with $L^{p}$-improving estimates of certain bilinear averaging operators, can be used to study sparse domination of maximal operators associated with the bilinear operators.
We recommend  \cite{JillPipher, Palsson,Luzsaurabh} and references therein, which contains results of bilinear spherical maximal operator, bilinear maximal triangle averaging  operators and bilinear product-type spherical maximal operators, respectively.

Recently, Christ and Zhou \cite{Christ_Zhou2022} studied $L^{p_{1}}\times L^{p_{2}}\rightarrow L^{p}$ with $1/p_1+1/p_2=1/p$ boundedness of bi-(sub)linear lacunary maximal functions defined on a class of singular curves, which might be understood in the sense of both \eqref{maximal avg 1} and \eqref{maximal avg 2}. 
$$
	\mathcal{M}(f_1, f_2)(x) := \sup_{\ell\in\mathbb{Z}}|B_{2^\ell}(f_1, f_2)(x)| = \sup_{\ell\in\mathbb{Z}}\Big|\int_{\R^1} \prod_{j=1}^2 f_j(x- 2^\ell \gamma_j(t)) \eta(t)~\mathrm{d}t \Big|,
$$
where $\gamma = (\gamma_1, \gamma_2) : (-1, 1) \to \R^2$, and $\eta \in C_0^\infty((-1, 1))$.
In consequence, they have proved $L^{p_{1}}\times L^{p_{2}}\rightarrow L^{p}$ estimates for $1<p_1,p_2\leq\infty$, $1/p_1+1/p_2=1/p$  of the bi-(sub)linear  lacunary spherical maximal operator $\mathfrak{M}_{\mathbb{S}^{2d-1}}$, for dimension $d=1$ which is defined by 
$$
	\mathfrak{M}_{\mathbb{S}^{1}}(f_{1},f_{2})(x)
	:=\sup_{\ell\in\mathbb{Z}}\Big| \textsl{A}_{\mathbb{S}^{1}}^{2^{\ell}}(f_{1},f_{2})(x)\Big|
	=\sup_{\ell\in\mathbb{Z}}\Big|\int_{\mathbb{S}^{1}} \prod_{j=1,2} f_{j}(x-2^{\ell}y_j)~\mathrm{d}\sigma(\textsl{y})\Big|,
$$
where $d\sigma(\textsl{y})$ is the normalized surface measure on the circle $\mathbb{S}^{1}$. 
For $d\ge2$, the complete $(L^{p_{1}}\times L^{p_{2}}\rightarrow L^{p})$-estimate of the operator $\mathfrak{M}_{\mathbb{S}^{2d-1}}$ was not known. 
However, there are some partial results of the operator $\mathfrak{M}_{\mathbb{S}^{2d-1}}$ \cite{Palsson,JillPipher}, and very recently Borges and Foster \cite{Bo_Fo2023} have obtained almost sharp results including some endpoint estimates.
In this paper, we give a different proof of the same $(L^{p_{1}}\times L^{p_{2}}\rightarrow L^{p})$-estimate for $\mathfrak{M}_{\mathbb{S}^{2d-1}}$.

There is another important bi-(sub)linear maximal function
	$$\mathfrak{M}^{*}_{\mathbb{S}^{2d-1}}(f_{1},f_{2})(x):=\sup_{t>0}| \textsl{A}_{\mathbb{S}^{2d-1}}^{t}(f_{1},f_{2})(x)|,$$
which is known as bilinear spherical maximal function.
Study of this operator was generated in \cite{Grafakos1}. 
Later, in \cite{JeongLee} Jeong and Lee proved almost complete $L^{p_{1}}\times L^{p_{2}}\rightarrow L^{p}$ estimates for $1/p_1+1/p_2=1/p$, $p_{1},p_{2}>1 $ and $p>\frac{d}{2d-1}$ when $d\geq 2$. 
This result is extended to $d=1$ by Chirst and Zhou \cite{Christ_Zhou2022}.
It would be interesting to study $L^{p_1}\times L^{p_2}\rightarrow L^p$ boundedness of $\mathfrak{M}^{*}_{\Sigma}$, where $\Sigma$ is a compact smooth hypersurface with $\kappa$ non-vanishing principal curvatures $(\kappa\leq 2d-1)$. For some specific hypersurfaces, the optimal (except few border line cases) $L^{p_1}\times L^{p_2}\rightarrow L^p$ boundedness is known \cite{LeeShuin_2023}. 

For general hypersurface with non-vanishing Gaussian curvature, only $L^2(\R^d)\times L^2(\R^d) \to L^1(\R^d)$ estimate is known for $d\geq4$ \cite{GHH_2021}.
It would be interesting to study $L^{p_1}\times L^{p_2} \to L^p$ estimates of such full maximal averages for $p\leq 1$ in all dimensions and their multilinear analogues.
However, multilinear estimates for $m$-linear full maximal operators with $m\geq3$ have not been pursued, while $L^2\times\cdots\times L^2 \to L^{2/m}$ bounds for lacunary maximal operators are studied by Grafakos, He, Honz\'ik, and Park \cite{GHHP_2022}.
In this paper, we focus on $L^{p_1}\times\cdots\times L^{p_m} \to L^p$ bounds for the lacunary maximal functions for $1/p = 1/p_1+\cdots+1/p_m$ and $p<2/m$.
It would be our future goal to study $m$-linear estimates for the full maximal functions for $m\geq3$.

We first state $L^{1}$-improving and quasi-Banach estimates of the $m$-linear averaging operators $\mathcal{A}^{\Theta}_{\mathcal{S}}$ and $\textsl{A}_{\Sigma}$.
Note that the following two propositions are derived by simple Fourier analysis and multilinear interpolation, and we will give a proof of the propositions for self-containedness.
\begin{proposition}\label{prop-qbanach esti}
Let $\mathcal{A}_{\mathcal{S}}^\Theta(\rF)$ be given in \eqref{defn-mlao} and $\mathcal{S}$ be a compact smooth hypersurface contained in $\mathbb{B}^{d}(0,1)$ with $\kappa\leq d-1$ nonvanishing principal curvatures.  
Let $\Theta = \{\Theta_j\}_{j=1}^m$ be a family of mutually linearly independent rotaion matrices.
Let also $\mathcal{V}^{ij}_{\kappa}=\{z = (z_1, \dots, z_m) \in [0,1]^m : z_i = z_j = \frac{\kappa+1}{\kappa+2}, z_l = 0, l\not=i,j \}$	 and $\mathrm{conv}(\mathcal{V}_{\kappa})$ be its convex hull.
Then for $(\frac{1}{p_1} , \dots, \frac{1}{p_m})\in \mathrm{conv}(\mathcal{V}_{\kappa})$ we have the following inequalities:
%\begin{enumerate}
%\item 
%\begin{equation*}
%  \| \mathcal{A}_{\mathcal{S}}^\Theta(\rF) \|_{L^1(\R^{d})} &\lesssim  \prod_{j=1}^m \|f_j\|_{L^{p_j}(\R^{d})},
%\end{equation*}
%\item if $p$ satisfies $\frac{1}{p} = \sum_{j=1}^m\frac{1}{p_j} = \frac{2(k+1)}{k+2}$ then
%\begin{equation*}
%  \| \mathcal{A}_{\mathcal{S}}^\Theta(\rF) \|_{L^p(\R^{d})} &\lesssim  \prod_{j=1}^m \|f_j\|_{L^{p_j}(\R^{d})}.
%\end{equation*}
%\end{enumerate}
% 
 \begin{align*}
  \| \mathcal{A}_{\mathcal{S}}^\Theta(\rF) \|_{L^p(\R^{d})} &\lesssim  \prod_{j=1}^m \|f_j\|_{L^{p_j}(\R^{d})},
  \end{align*}
whenever $1\leq \frac{1}{p} \leq  \frac{2(\kappa+1)}{\kappa+2}=\sum_{j=1}^m\frac{1}{p_j} $.
\end{proposition}

\begin{proposition}\label{prop_qbesti_cM}
  Let $d\geq2$ and $\textsl{A}_{\Sigma}(\rF)$ be an average given by \eqref{defn-mlao-cM} over a compact smooth hypersurface $\Sigma$ with $\kappa$ nonvanishing principal curvatures with $(m-1)d < \kappa \leq md-1$.
  Then for $1\leq p_j \leq 2$, $j=1,2, \dots, m$ and $\frac{m+1}{2} \leq \sum_{j=1}^m \frac{1}{p_j} < \frac{2d+\kappa}{2d}$, the following $L^1$-improving estimates hold:
  \begin{align}\label{ineq_L1_impr}
  \| \textsl{A}_{\Sigma}(\rF) \|_{L^1(\R^{d})} \lesssim  \prod_{j=1}^m \|f_j\|_{L^{p_j}(\R^{d})}.
  \end{align}
  Moreover, we have for $\frac{m+1}{2} \leq \frac{1}{p} = \sum_{j=1}^m \frac{1}{p_j} < \frac{2d+\kappa}{2d}$
    \begin{align}\label{ineq_qba_cM}
    \| \textsl{A}_{\Sigma}(\rF) \|_{L^p(\R^{d})} \lesssim  \prod_{j=1}^m \|f_j\|_{L^{p_j}(\R^{d})}.
    \end{align}
  Let $1\leq p, p_1, \dots, p_m<\infty$ and $1/p=1/p_1+\cdots+1/p_m$.
  Then for $f_1, \dots, f_m$ with  $\supp(\widehat{f_j}) \subset \mathbb{A}_{n_j} := \{\xi_j \in \R^d : 2^{n_j-1} \leq |\xi_j| \leq 2^{n_j+1}\}$, $n_j\in \mathbb{Z}$, $j=1, \dots, m$, 
  we have
  \begin{align}\label{ineq_reg_cM}
		\left\| \textsl{A}_{\Sigma}(\mathrm{F})\right\|_{L^p(\R^d)} \lesssim 2^{-\delta|\mathbf{n}|} \prod_{j=1}^m \|f_j\|_{L^{p_j}(\R^d)},
	\end{align}
	where $\delta = \delta(p, \kappa , m , d)>0$ and $|n| = \sqrt{\sum_{j=1}^m n_j^2}$.
\end{proposition}

When $p>1$, one can obtain different $L^p$-improving estimates for $\cA_\cS^\Theta$ under specific choice of $\{\Theta_j\}$ and $\mathcal{S}$.
In this case, we do not need any curvature condition on $\cS$ and only the dimension of surfaces matters.
Let $\mathcal{S}^k$ be a $k$-dimensional $C^2$ surface in $\R^d$.
We choose mutually linearly independent $\{\Theta_j\}$. 
Moreover, we assume that for any choice of $\{j_i\}_{i=1}^\ell$ with $2\leq \ell \leq k+1\leq m$, the family $\{\Theta_j\}$ satisfies
\begin{align}
	\dim\Big( \text{span}_{1\leq i\leq \ell} \big( \{\Theta_{j_i}(y',0) \in \R^d : y'\in\R^k\} \big) \Big)&\geq \min\{k-1+\ell, d\},\label{ineq-240115 1454}\\
	\dim\Big(\bigcap_{i=1}^\ell \{\Theta_{j_i}(y',0) \in \R^d : y'\in\R^k\}  \Big) &\leq k+1-\ell.\label{ineq-230106 1631}
\end{align}
The assumption \eqref{ineq-230106 1631} yields that dimension of intersection of any subset $\{\Theta_{j_i}\}_{i=1}^{k+1}$ of $\{\Theta_j\}_{j=1}^m$ equals to zero.
The following theorem is one of our main results:
\begin{theorem}\label{thm-lp improving}
	Let $m\geq d\geq 2$ and $\mathcal{S}^k$ be a $k$-dimensional $C^2$ surface in $\mathbb{B}^d(0,1)$.
	Suppose that $\{\Theta_j\}$ satisfies \eqref{ineq-240115 1454} and \eqref{ineq-230106 1631},
	and $k$ is given such that
	\begin{align}
		\frac{m-d+k}{m} &\geq \frac{d-k-1}{d}k,\label{high codim}\\
		\frac{m-1}{m} &\geq \frac{(d-k)k}{d}. \label{hypersurface} 
	\end{align}
	Then $\mathcal{A}_{\mathcal{S}^k}^\Theta$ is of strong type $(m, \dots, m, \frac{d}{d-k})$.
  That is, we have
	\begin{align}
		\| \mathcal{A}_{\mathcal{S}^k}^\Theta (\rF) \|_{L^{\frac{d}{d-k}}(\R^d)} \lesssim \prod_{j=1}^m \|f_j \|_{L^{m}(\R^d)}.
	\end{align}
\end{theorem}
In our proof of Theorem~\ref{thm-lp improving}, we mainly use the nonlinear Brascamp-Lieb inequality proved in \cite{BBBCF2020}.
We give details on the inequality and the proof of Theorem~\ref{thm-lp improving} in Section~\ref{sec-nlbl}.

In Theorem~\ref{thm-lp improving}, One can use $m\geq d$ to check that \eqref{high codim} and \eqref{hypersurface} are equivalent when $d= 2k+1$.
Precisely, \eqref{high codim} implies \eqref{hypersurface} when $d\geq 2k+1$, and \eqref{hypersurface} implies \eqref{high codim} when $d\leq 2k+1$.
Moreover, if we assume $k=d-1$, then we only need \eqref{ineq-230106 1631} to guarantee the following result:
\begin{corollary}\label{cor-hypersurface}
  Let $m\geq d\geq 2$, $\mathcal{S}^{d-1}$ be a $C^2$ hypersurface, and $\{\Theta_j\}$ be chosen to be mutually linearly independent and satisfy \eqref{ineq-230106 1631}. Then , $\mathcal{A}_{\mathcal{S}^{d-1}}^\Theta$ is of strong-type $(m, \dots, m, d)$.
\end{corollary}
One can find similar results in \cite[Theorem 1.2]{Iosevich}, which yields restricted strong-type $(m, \dots, m, m)$ and $\Big(m\frac{d+1}{d}, \dots, m\frac{d+1}{d}, d+1\Big)$ estimates for $\cA_{\cS^{d-1}}^\Theta$ when $\mathcal{S}^{d-1}$ is a sphere.
Note that the authors \cite{Iosevich} considers $m\leq d$ cases with linearly independent $\{\Theta_j\}$, so it cannot be directly compared to Corollary~\ref{cor-hypersurface} in which $m\geq d$ and \eqref{ineq-230106 1631} are considered.
When $m=d$, however, Corollary~\ref{cor-hypersurface} with $\mathcal{S}^{d-1} = \mathbb{S}^{d-1}$gives strong-type $(m, \dots, m, m)$ estimates.

To study further how \cite[Theorem 1.2]{Iosevich} and Corollary~\ref{cor-hypersurface} are related, we introduce a quantity $\mathfrak{D}$ which is  given for each $(p_1, \dots, p_m, p)$-estimate by
$$
	\mathfrak{D}(p_1, \dots, p_m;p) := \Big(\frac{1}{p_1} + \cdots +\frac{1}{p_m}\Big) - \frac{1}{p}.
$$
One can measure extent of $L^p$-improving by means of the difference $\mathfrak{D}$.
Then we have
$$
	\mathfrak{D}(m, \dots, m; m) = \frac{m-1}{m},\quad \mathfrak{D}\Big(m\frac{d+1}{d}, \dots, m\frac{d+1}{d}; d+1\Big) = \frac{d-1}{d+1},
$$ 
where $m\leq d$.
On the other hand, Corollary~\ref{cor-hypersurface} yields 
$$
	\mathfrak{D}(m, \dots, m; d) = \frac{d-1}{d},\quad m\geq d.
$$
Thus Corollary~\ref{cor-hypersurface} yields wider range of $L^p$-improving than $(m(d+1)/d, \dots, m(d+1)/d, d+1)$-estimate of \cite[Theorem 1.2]{Iosevich} under certain choice of $\{\Theta_j\}$.
We also note that the difference $\frac{d-1}{d+1}$ is the best possible for linear spherical averages, since $\cA_{\mathbb{S}^{d-1}}$ satisfies $L^{\frac{d+1}{d}}(\R^d) \to L^{d}(\R^d)$ boundedness.
Even for $L^1$-improving estimates in Proposition~\ref{prop-qbanach esti}, we obtain $\mathfrak{D}(p_1, \dots, p_m; 1) = \frac{d-1}{d+1}$.
Hence one can say that the number $\frac{d-1}{d}$ only occurs for multilinear averaging operators with certain transversality of $\{\Theta_j\}$.
Moreover, we only assume a surface $\cS$ is of class $C^2$ without any curvature condition,
and it would be very interesting to study boundedness of maximal operators associated with $\mathcal{A}_{\mathcal{S}^k}^\Theta$.

\hfill

By making use of the quasi-Banach space estimates, Propositions~\ref{prop-qbanach esti} and \ref{prop_qbesti_cM} together with Sobolev regularity estimates, we obtain multilinear estimates for lacunary maximal operators $\cM_\cS^\Theta$ and $\mathfrak{M}_\Sigma$.
\begin{theorem}\label{thm-lac}
	Let $1\leq p_i^\circ \leq\infty$, $\sum^m_{i=1}\frac{1}{p_i^\circ}=\frac{1}{p^\circ}$ with $p^\circ\geq1$ for $d\geq2$. 
	Suppose that $\mathcal{A}_\mathcal{S}^\Theta$ satisfies the following Sobolev regularity estimates:
  \begin{align}\label{ineq_reg_cM'}
		\left\| \mathcal{A}_{\mathcal{S}}^\Theta(\mathrm{F})\right\|_{L^{p^\circ}(\R^d)} \lesssim 2^{-\varepsilon|\mathbf{n}|} \prod_{j=1}^m \|f_j\|_{L^{p_j^\circ}(\R^d)},
	\end{align}
	where $f_1, \dots, f_m$ with  $\supp(\widehat{f_j}) \subset \mathbb{A}_{n_j} := \{\xi_j \in \R^d : 2^{n_j-1} \leq |\xi_j| \leq 2^{n_j+1}\}$, $j=1, \dots, m$, and $\varepsilon = \varepsilon(p, \kappa , m , d)>0$.
  Then the lacunary maximal function $\mathcal{M}^{\Theta}_{\mathcal{S}}$ maps $L^{p_1}(\R^d)\times\cdots\times L^{p_m}(\R^d)\rightarrow L^{p}(\R^d)$ for $(\frac{1}{p_1}, \dots, \frac{1}{p_m})\in  \mathrm{conv}(\mathcal{V}_{\kappa}^\circ) \cup \{(0,\dots, 0)\}$ and $1/p = 1/p_1+\cdots+1/p_m$, 
  where $\mathrm{conv}(\mathcal{V}_{\kappa}^\circ)$ denotes an interior of the convex hull of $\mathrm{conv}(\mathcal{V}_{\kappa})$ and the origin.
  In particular, if one considers a lacunary maximal operator associated with $\mathbb{S}^{d-1}$, then the range of $p$ becomes $p>\frac{d+1}{2d}$.
\end{theorem}

Observe that the multilinear averaging operator \eqref{defn-mlao} is an analogous multilinear averaging operator to the bilinear operator $\textsl{B}_{\theta}$ considered by Greenleaf $et.$ $al.$ \cite{Greenleaf1}. 
$$
	B_\theta(f,g)(x) = \int_{\mathbb{S}^1} f(x-y) g(x-\theta y) ~\mathrm{d}\sigma(y),
$$
where $\theta$ denotes a counter-clockwise rotation. 
Therefore, Theorem~\ref{thm-lac} (when $m=2$) yields boundedness of the lacunary maximal function corresponding to the averaging operator $\textsl{B}_{\theta}$ under the assumption on the Sobolev regularity estimates \eqref{ineq_reg_cM'}.
Thus one only need to show \eqref{ineq_reg_cM'}, but it is not accomplished in this paper.

On the other hand, one can actually obtain Sobolev regularity estimates for $\textsl{A}_\Sigma$, which is \eqref{ineq_reg_cM} of Proposition~\ref{prop_qbesti_cM}.
Thus, another main result of this paper is the following lacunary maximal estimates for $\textsl{A}_\Sigma$:
\begin{theorem}\label{thm-lac-cM}
	Let $\frac{m+1}{2}\leq \frac{1}{p} = \sum_{j=1}^m \frac{1}{p_j}<\frac{2d+\kappa}{2d}$ for $1\leq p_j\leq2$ and $\kappa>(m-1)d$. 
	Then the lacunary maximal operator $ \mathfrak{M}_{\Sigma}$ maps $L^{p_1}(\R^d)\times\cdots\times L^{p_m}(\R^d)\rightarrow L^{p}(\R^d)$.
\end{theorem}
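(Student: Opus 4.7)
The plan is to follow the standard frequency-decomposition approach for lacunary maximal operators, with Proposition~\ref{prop_qbesti_cM} providing the key multilinear input. Fix a dyadic Littlewood-Paley partition of unity $\{\phi_j\}_{j\geq 0}\subset C_c^\infty(\R^{md})$ with $\supp(\phi_0)\subset \{|\textsl{w}|\leq 2\}$ and $\supp(\phi_j)\subset \{|\textsl{w}|\sim 2^j\}$ for $j\geq 1$. Define $\sigma^j$ via $\widehat{\sigma^j}=\widehat{\sigma_\Sigma}\phi_j$ and write $\textsl{A}_\Sigma^{2^\ell}(\rF)=\sum_{j\geq 0}T_j^{2^\ell}(\rF)$, where $T_j^{2^\ell}$ is the multilinear operator at scale $2^\ell$ with multiplier $\widehat{\sigma^j}(2^\ell\xi)$. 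Let $\mathfrak{M}_j(\rF):=\sup_{\ell\in\Z}|T_j^{2^\ell}(\rF)|$. Since the hypothesis forces $p\leq 2/(m+1)\leq 1$, the quasi-triangle inequality gives
\[
\|\mathfrak{M}_\Sigma(\rF)\|_{L^p}^p\leq \sum_{j\geq 0}\|\mathfrak{M}_j(\rF)\|_{L^p}^p,
\]
so it suffices to prove $\|\mathfrak{M}_j(\rF)\|_{L^p}\lesssim 2^{-\epsilon j}\prod_i\|f_i\|_{L^{p_i}}$ for some $\epsilon>0$.

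The piece $j=0$ is standard: $\widehat{\sigma^0}$ is bounded and compactly supported, so $\sigma^0$ is Schwartz, and one obtains the pointwise envelope bound $\mathfrak{M}_0(\rF)(x)\lesssim \prod_{i=1}^m Mf_i(x)$, where $M$ denotes the Hardy-Littlewood maximal operator; boundedness in the required range then follows from $p_i\geq 1$.

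For $j\geq 1$, applying $\sup_\ell |a_\ell|\leq (\sum_\ell|a_\ell|^p)^{1/p}$ reduces the matter to estimating $\sum_\ell \|T_j^{2^\ell}(\rF)\|_{L^p}^p$. The multiplier of $T_j^{2^\ell}$ is supported on the $\R^{md}$-annulus $\{|\xi|\sim 2^{j-\ell}\}$, so after a Littlewood-Paley decomposition of each $f_i$, only dyadic frequency configurations $(n_1,\dots,n_m)$ with $\max_i n_i\approx j-\ell$ yield nonzero contributions; this matches essentially one admissible $\ell$ to each configuration and thereby restores $\ell$-summability. For each surviving piece, Proposition~\ref{prop_qbesti_cM} specialized to $\sigma^j$---whose Fourier transform satisfies $\|\widehat{\sigma^j}\|_\infty\lesssim 2^{-jk/2}$ by stationary phase---produces a factor $2^{-\epsilon j}$ with $\epsilon>0$, the strict inequality $1/p<(2d+k)/(2d)$ supplying the room to interpolate away from the endpoint.

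The main obstacle will be the multilinear orthogonality step: summing the Littlewood-Paley pieces of the $f_i$'s without inflating the $L^{p_i}$-norms is natural when standard square-function estimates apply (that is, for $1<p_i\leq 2$), but the endpoint $p_i=1$ lies outside this range. I therefore expect to handle $p_i=1$ separately, either via a truncation-and-density argument or by directly invoking the $L^1$-improving estimate on a suitable auxiliary sub-maximal operator, thereby circumventing the square-function step.
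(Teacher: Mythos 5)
Your decomposition is dual to the paper's but genuinely different: you Littlewood--Paley-decompose the surface measure $\sigma_\Sigma$ into pieces $\sigma^j$ and sup over dilations of each piece, whereas the paper decomposes each input $f_j = P_{<\ell}f_j + \sum_{n\geq 0}P_{\ell+n}f_j$ and controls the resulting quantities $\textsl{A}_\ell^\alpha$, $\textsl{M}_{\mathbf{n}}$, $\textsl{S}_{\mathbf{n}}$ directly. The $j=0$ piece handled by $\prod_i Mf_i$, the $\ell^p$-quasi-triangle reduction, and the observation that $\widehat{\sigma^j}$ forces $\max_i n_i \approx j-\ell$ are all sound, and the correspondence $j\leftrightarrow |\mathbf{n}|$ makes your strategy morally parallel to the paper's. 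So the architecture is fine.

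The genuine gap is exactly the one you flag at the end, and neither of your proposed remedies closes it. To sum the Littlewood--Paley pieces of the $f_i$'s back into an $L^{p_i}$-norm you are implicitly invoking the square-function equivalence $\|f\|_{L^{p_i}}\sim\|(\sum_n|P_nf|^2)^{1/2}\|_{L^{p_i}}$, which fails at $p_i=1$ (it holds only for $H^1$, not $L^1$). This is not a peripheral endpoint: the hypothesis $\frac{m+1}{2}\leq\sum_j 1/p_j$ with all $p_j\leq 2$ forces the range to hug $p_j=1$, so the case you are worried about is the main content of the theorem. A truncation-and-density argument does not help, because density gives no a priori bound uniform in the approximating sequence; and ``invoking the $L^1$-improving estimate on an auxiliary sub-maximal operator'' is not specified precisely enough to see how it avoids the orthogonality issue. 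The tool the paper uses in place of the square function is a multilinear Calder\'on--Zygmund decomposition at height $\lambda^{p/p_j}$: this produces a \emph{weak-type} $L^{p_1}\times\cdots\times L^{p_m}\to L^{p,\infty}$ bound for $\textsl{M}_{\mathbf{n}}$ with only polynomial growth $(1+|\mathbf{n}|^m)$ (Lemma~\ref{lem-qba esti_scrM}, whose proof rests on Lemmas~\ref{lem-decay for b}--\ref{lem-sum}), and this is precisely what is available at $p_j=1$ where square-function methods are not. The geometric gain $2^{-\delta|\mathbf{n}|}$ you want to extract from the curvature decay of $\widehat{\sigma^j}$ is obtained separately, via the Besov/Sobolev smoothing bound (Lemma~\ref{lem-smoothing_scrS}, built on \eqref{ineq_reg_cM}), and interpolation between the two produces the summable decay. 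Without the Calder\'on--Zygmund step, or an explicit substitute, your argument does not yield the $p_j=1$ endpoint, so as written the proof has a hole. You would also need to spell out the double summation over $\ell$ and $\mathbf{n}$ (for each $j$, the admissible $\mathbf{n}$ with $\max_i n_i\approx j-\ell$ form a set of size $\sim(j-\ell)^{m-1}$, and the bookkeeping needed to beat this polynomial factor is exactly what Lemmas~\ref{lem-decay for b} and \ref{lem-sum} carry out), but this is a technical matter rather than a conceptual gap.
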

$(L^{p_1}\times\cdots\times L^{p_m} \to L^p)$-estimates of Theorem~\ref{thm-lac-cM} is easily extended to $1\leq p_j \leq\infty$ via multilinear interpolation, since $ \mathfrak{M}_{\Sigma}$ is bounded from $L^\infty \times \cdots \times L^\infty$ to $L^\infty$.

\begin{remark}\label{rem_mainthm}
    What we will prove in Sections~\ref{sec_thm_lac} and \ref{sec-qba n smthng}  is that multi-linear estimates of lacunary maximal operators will be derived from $L^1$-improving estimates and Sobolev regularity estimates of corresponding averaging operators.
    Precisely, if one obtains $L^{p_1^\circ} \times \cdots \times L^{p_m^\circ}\to L^1$ estimates of averaging operators with $\sum_{j=1}^m 1/p_j^\circ >1$, then we obtain $L^{p_1^\circ} \times \cdots \times L^{p_m^\circ} \to L^{p^\circ, \infty}$ estimates of the lacunary maximal operators for $\sum_{j=1}^m 1/p_j^\circ =1/p^\circ$ together with certain polynomial growth, which is Lemma~\ref{lem-qba esti}.
    The polynomial growth of Lemma~\ref{lem-qba esti} will be handled by interpolation with an exponential decay estimates of Lemma~\ref{lem-smoothing}, which is originated by the Sobolev regularity estimates of averaging operators.
    In result, we obtain $L^{p_1}\times \cdots \times L^{p_m} \to L^p$ estimates for $\sum_{j=1}^m 1/p_j =1/p$ with $p_1, \dots, p_m\geq1$ and $p>p^\circ$. 
\end{remark}

As a simple application of Remark~\ref{rem_mainthm}, we obtain the following result:
\begin{remark}\label{bilinearspherical}
	Theorem \ref{thm-lac-cM} also yields the following boundedness of the bilinear lacunary spherical maximal function $ \mathfrak{M}_{\mathbb{S}^{2d-1}}$.
	 Let $d\geq1$, $1<p_{1},p_2\leq\infty$ and $1/p_{1}+1/p_{2}=1/p$. Then 
	 \begin{align}\label{ineq_bi_lac_sph}
	 	\Vert  \mathfrak{M}_{\mathbb{S}^{2d-1}}(f_1,f_2)\Vert_{L^{p}}\lesssim \Vert f_1\Vert_{L^{p_1}}\Vert f_2\Vert_{L^{p_2}}.
	\end{align}
	 Note that we make use of $L^1\times L^1\to L^{1/2}$ estimates of $\textsl{A}_{\mathbb{S}^{2d-1}}^1$ given by \cite{Iosevich} and machinery of Section~\ref{sec_thm_lac} to obtain \eqref{ineq_bi_lac_sph} for $p>1/2$.
	 This estimate is already given in \cite{Bo_Fo2023} and we give a different proof at the end of this paper. 
\end{remark}

\begin{remark}
  It is known in \cite{GHHP_2022} that $\mathfrak{M}_{\Sigma}$ satisfies $(L^2\times \cdots\times L^2 \to L^{2/m})$-estimates for certain $\kappa$.
  One can check that even for the worst indices, our Theorem \ref{thm-lac-cM} is better than the $(L^2\times\cdots\times L^2 \to L^{2/m})$-estimates in the sense that Theorem~\ref{thm-lac-cM} holds for $L^p$ spaces with lower indices since $\frac{2}{m} > \frac{2}{m+1} > \frac{2d}{2d+\kappa}$.
  When $\kappa\leq(m-1)d$, we do not know anything yet.
  \end{remark}

\section*{Notations and definitions}

$\bullet$ For a cube $Q$ or a ball $B$ in $\R^d$, we define $CQ$ and $CB$ whose sidelength and radius are $C$ times those of $Q$ and $B$ with the same centers, respectively.
For a measureable set $E$, we say $\meas(E)$ as a measure of $E$.

$\bullet$
Choose a Schwartz class function $\phi$ such that $\supp(\widehat{\phi})\subset B(0,2)$ and $\widehat{\phi}(\xi)=1$ for $\xi\in B(0,1)$. 
Also consider $\widehat{\psi}(\xi)=\widehat{\phi}(\xi)-\widehat{\phi}(2\xi)$ so that $\supp(\wh{\psi}) \subset \{2^{-1}<|\xi|<2\}$.
By symbols $\wh{\phi}_\ell(\xi) = \wh{\phi}(2^{-\ell}\xi)$ and $\wh{\psi}_\ell(\xi) = \wh{\psi}(2^{-\ell}\xi)$ we define frequency projection operators.
 \begin{align}
  \widehat{P_{<\ell}f}(\xi)&=\widehat{f}(\xi)\widehat{\phi}_\ell(\xi),\label{proj<ell}\\
  \widehat{P_{\ell}f}(\xi)&=\widehat{f}(\xi)\widehat{\psi}_\ell(\xi).\label{proj_ell}.
 \end{align}

%\section{Preliminaries}
%
%\subsection{Definitions}
%
%For $1<p<\infty$ and real $s$, we introduce the Bessel potential space $H^{s,p}(\R^d)$ whose norm is given by
%$$
%	\|f\|_{H^{s,p}(\R^d)} = \left\|\mathcal{F}^{-1}\left[(1+|\xi|^2)^{\frac{s}{2}}\mathcal{F}[f] \right]\right\|_{L^p(\R^d)},
%$$
%where $\mathcal{F}$, $\mathcal{F}^{-1}$ denote the Fourier transform and its inverse.
%We also use $\widehat{f}$, $\check{f}$ to mean a Fourier transform and its inverse of $f$.
%
%Choose a Schwartz class function $\phi$ such that $\supp(\widehat{\phi})\subset B(0,2)$ and $\widehat{\phi}(\xi)=1$ for $\xi\in B(0,1)$. 
%Also consider $\widehat{\psi}(\xi)=\widehat{\phi}(\xi)-\widehat{\phi}(2\xi)$ so that $\supp(\wh{\psi}) \subset \{2^{-1}<|\xi|<2\}$.
%By symbols $\wh{\phi}_\ell(\xi) = \wh{\phi}(2^{-\ell}\xi)$ and $\wh{\psi}_\ell(\xi) = \wh{\psi}(2^{-\ell}\xi)$ we define frequency projection operators.
% \begin{align}
%  \widehat{P_{<\ell}f}(\xi)&=\widehat{f}(\xi)\widehat{\phi}_\ell(\xi),\label{proj<ell}\\
%  \widehat{P_{\ell}f}(\xi)&=\widehat{f}(\xi)\widehat{\psi}_\ell(\xi).\label{proj_ell}
% \end{align} 
% Then we define a norm of Besov spaces by
% $$
% 	\|f\|_{B_{p,q}^s(\R^d)} = \|P_{<1}f\|_{L^p(\R^d)} + \Big( \sum_{\ell\geq1} 2^{jsq}\|P_\ell f\|_{L^p(\R^d)}^q\Big)^{\frac{1}{q}}.
% $$

\section{Proofs of Propositions~\ref{prop-qbanach esti} and \ref{prop_qbesti_cM}}\label{section 2}

\subsection{Proof of Proposition~\ref{prop-qbanach esti}}\label{pf_qbanach esti}
The proof of Proposition~\ref{prop-qbanach esti} follows from the following lemma and 
 a standard technique from \cite{GrKa_2001, Iosevich}.
\begin{lemma}\label{lem-L1 improving1}
  $\mathcal{A}_{\mathcal{S}}^\Theta$ is bounded from $L^{p_1}(\R^d) \times \cdots\times L^{p_m}(\R^d)$ to $L^1(\R^d)$ for $(\frac{1}{p_1} , \dots, \frac{1}{p_m}) \in \mathrm{conv}(\mathcal{V}_{\kappa})$.
  In particular, if $\kappa = d-1$, then one example of $\mathcal{A}_{\mathcal{S}}^\Theta$ is the spherical averaging operator $\mathcal{A}_{\mathbb{S}^{d-1}}^\Theta$.
\end{lemma}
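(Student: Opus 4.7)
The plan is to prove the $L^1$-improving inequality at each vertex of $\mathcal{V}_k$ and then use multilinear complex interpolation to recover the full convex hull. A vertex singles out two indices $i,j$ with $1/p_i = 1/p_j = (k+1)/(k+2)$ and all other $p_\ell = \infty$; pulling out the $L^\infty$-norms of the factors $f_\ell$ for $\ell \neq i,j$ reduces the problem to the bilinear estimate
\[
\Bigl\|\int_{\cS} f(x+\Theta_i y)\,g(x+\Theta_j y)\,d\sigma(y)\Bigr\|_{L^1(\R^d)} \lesssim \|f\|_{L^{(k+2)/(k+1)}}\|g\|_{L^{(k+2)/(k+1)}}.
\]

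For this bilinear bound I would rotate so that $\Theta_i = I$ and set $\Phi = \Theta_j\Theta_i^{-1}$, a rotation distinct from the identity by the pairwise linear-independence hypothesis. Fubini together with the substitution $u = x+y$ then gives, for $f,g\geq 0$,
\[
\int_{\R^d}\int_\cS f(x+y)\,g(x+\Phi y)\,d\sigma(y)\,dx = \int_\cS \Bigl(\int f(u)\,g(u + (\Phi-I)y)\,du\Bigr)d\sigma(y) = \langle g,\,f \ast d\nu\rangle,
\]
where $\nu$ is the pushforward of $d\sigma$ under the linear map $y \mapsto (\Phi-I)y$. By H\"older the bilinear claim then reduces to the $L^p$-improving estimate $\|f \ast d\nu\|_{L^{k+2}(\R^d)} \lesssim \|f\|_{L^{(k+2)/(k+1)}(\R^d)}$, which is the classical Littman/Strichartz-type bound coming from the Fourier decay $|\widehat{d\sigma}(\xi)| \lesssim (1+|\xi|)^{-k/2}$ (obtained by interpolating the trivial $L^1 \to L^1$ bound against a finite measure with the Sobolev-smoothing inequality $\|f \ast d\nu\|_{H^{k/2,2}(\R^d)} \lesssim \|f\|_{L^2(\R^d)}$, in the spirit of \cite{GrKa_2001, Iosevich}).

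The main obstacle is that $\Phi-I$ need not be invertible when $d \geq 3$ (for example, when $\Phi$ is a rotation fixing an axis), so that $\nu$ is supported on the proper subspace $V := \mathrm{Im}(\Phi-I)$. I would handle this by decomposing $\R^d = V \oplus V^\perp$, freezing the $V^\perp$-coordinate of $f$, and applying the lower-dimensional $L^p$-improving estimate slice by slice; since the restriction of $\Phi-I$ to the tangent space of $\cS$ is generically non-degenerate, the image $(\Phi-I)\cS$ still carries $k$ non-vanishing principal curvatures inside $V$ and the required convolution inequality continues to hold. With the vertex estimates secured, standard multilinear complex interpolation then delivers the bound on all of $\mathrm{conv}(\mathcal{V}_k)$, completing the proof.
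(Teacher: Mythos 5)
Your proof follows essentially the same path as the paper's: reduce to the bilinear vertex estimate by pulling out $L^\infty$ norms of the remaining factors (the paper does this first and then invokes multilinear interpolation; you invoke interpolation first and then the vertex structure, which is cosmetically different but equivalent), use the substitution in $x$ to realize the inner integral as a convolution $f * \mathrm{d}\nu$ with the pushforward $\nu$ of $\sigma$ under the linear map $y \mapsto (\Theta_j - \Theta_i)y$, and apply the Littman--Strichartz $L^{(k+2)/(k+1)} \to L^{k+2}$ estimate, which is exactly what the paper does by applying H\"older's inequality together with the $T_{\mathfrak a}\colon L^p \to L^{p'}$ bound.

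You have, however, put your finger on a real subtlety that the paper's proof passes over in silence: for $d\geq 3$ the difference $\Theta_j - \Theta_i$ of two distinct, pairwise linearly independent rotations may well be singular (two rotations sharing a fixed axis, or $\Theta_i=I$ and $\Theta_j$ a rotation by $\pi$, are examples). In that case $\widehat{\nu}(\xi) = \widehat{\sigma}\bigl(-(\Theta_j-\Theta_i)^T\xi\bigr)$ has no decay along $\ker\bigl((\Theta_j - \Theta_i)^T\bigr)$, so the Strichartz estimate does not apply. The paper tacitly needs $\Theta_j - \Theta_{j'}$ to be invertible for $j\neq j'$; this is automatic for $d=2$ and proper rotations, but not for $d\geq 3$ under the stated hypothesis. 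Your proposed repair --- slicing $\R^d = V \oplus V^\perp$ with $V = \mathrm{Im}(\Theta_j - \Theta_i)$ and asserting that $(\Theta_j - \Theta_i)\cS$ carries $k$ nonvanishing principal curvatures inside $V$ --- is unjustified and generally false: when $\Theta_j - \Theta_i$ collapses a direction, the image of the $(d-1)$-dimensional hypersurface $\cS$ inside the lower-dimensional space $V$ is typically a solid region rather than a hypersurface (the image of $\mathbb{S}^{d-1}$ under a rank-$(d-1)$ map is a ball), the pushforward $\nu$ becomes an absolutely continuous measure whose density may blow up at the boundary, and its Fourier decay is no longer governed by the curvature hypothesis on $\cS$. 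The cleanest way to close the gap is to strengthen the hypothesis to invertibility of the differences $\Theta_j - \Theta_{j'}$, which is what the paper's argument implicitly requires. As a secondary remark, the parenthetical route you sketch to the Littman--Strichartz bound (interpolating the trivial $L^1\to L^1$ estimate against the $L^2 \to H^{k/2,2}$ smoothing) yields only a strictly weaker $L^p$-improving estimate than the sharp $L^{(k+2)/(k+1)} \to L^{k+2}$ endpoint; the sharp endpoint requires an analytic-family interpolation as in \cite{Littman, Stichartz}. Since you cite it as a black box this is not a gap, but the justification given is not the one that actually produces the endpoint.
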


  Let $p = \frac{k+2}{2(k+1)}$ and $(\frac{1}{p_1} , \dots, \frac{1}{p_m}) \in \mathrm{conv}(\mathcal{V}_{\kappa})$.
  We begin with
  \begin{align*}
    \| \mathcal{A}_{\mathcal{S}}^\Theta(\rF) \|_{L^p(\R^{d})}^p 
    = \int_{\R^{d}} \Big| \int_{\mathcal{S}} \prod_{j=1}^m f_j (x + \Theta_j y)  ~\mathrm{d}\sigma(y) \Big|^p ~\mathrm{d}x.
  \end{align*}
  Decompose $\R^d$ into countable union of unit cubes ${Q}_{\bf{n}} = {\bf{n}} + [0,1)^d$, ${\bf{n}} \in \mathbb{Z}^d$.
  Together with compactness of $\mathcal{S}$, we have
  \begin{align}\label{ineq-221121 1946}
    \| \mathcal{A}_{\mathcal{S}}^\Theta(\rF)\|_{L^p(\R^d)}^p 
    = \sum_{{\bf{n}}\in\mathbb{Z}^{d}} \int_{{Q}_{\bf{n}}} \Big| \int_{\mathcal{S}} \prod_{j=1}^m f_j (x + \Theta_j y)  ~\mathrm{d}\sigma(y) \Big|^p ~\mathrm{d}x.
  \end{align}
  Now we apply H\"older's inequality to obtain
  \begin{equation}\label{ineq-221121 1947}
  \begin{aligned}
    &\int_{{Q}_{\bf{n}}} \Big| \int_{\mathcal{S}} \prod_{j=1}^m f_j (x + \Theta_j y) ~\mathrm{d}\sigma(y) \Big|^p ~\mathrm{d}x\\
    &\lesssim \Bigl( \int_{Q_{\bf{n}}}  \int_{\mathcal{S}} \Big| \prod_{j=1}^m f_j (x + \Theta_j y) \Big| ~\mathrm{d}\sigma(y)  ~\mathrm{d}x\Bigr)^{p}.
  \end{aligned}
  \end{equation}
  Since $x\in {Q}_{\bf{n}}$ and $y \in \supp(\mathcal{S})$, we have the following equality
  \begin{align}\label{ineq-221121 1932}
    f_j(x + \Theta_j y) = ( f_j \mathbbm{1}_{\widetilde{Q}_{\bf{n}}} )(x + \Theta_j y),
  \end{align}
  where $\widetilde{Q}$ denotes a cube whose sidelength is 3 times  that of ${Q}$ with the same center.
  With help of \eqref{ineq-221121 1932} and Lemma~\ref{lem-L1 improving1}, we have
  \begin{equation}\label{ineq-221121 1948}
  \begin{aligned}
    &\Bigl( \int_{{Q}_{\bf{n}}}  \int_{\mathcal{S}} \Big| \prod_{j=1}^m f_j (x + \Theta_j y) \Big|  ~\mathrm{d}\sigma(y)  ~\mathrm{d}x\Bigr)^{p}\\
    &= \Bigl( \int_{{Q}_{\bf{n}}}  \int_{\mathcal{S}} \Big| \prod_{j=1}^m (f_j \mathbbm{1}_{\widetilde{Q}_{\bf{n}}}) (x + \Theta_j y) \Big| ~\mathrm{d}\sigma(y)  ~\mathrm{d}x\Bigr)^{p}\\
    &\lesssim  \Big(\prod_{j=1}^m \|  (f_j \mathbbm{1}_{\widetilde{Q}_{\bf{n}}}) \|_{L^{p_j}(\R^{d})} \Big)^{p}
    =\prod_{j=1}^m \|  f_j \mathbbm{1}_{\widetilde{Q}_{\bf{n}}} \|_{L^{p_j}(\R^{d})}^p
  \end{aligned}
  \end{equation}
  whenever $(\frac{1}{p_1}, \dots, \frac{1}{p_m})$ is in $\mathrm{conv}(\mathcal{V}_{\kappa})$.

  By \eqref{ineq-221121 1946}, \eqref{ineq-221121 1947}, and \eqref{ineq-221121 1948}, we have
  \begin{align}\label{ineq-221121 1949}
    \|\mathcal{A}_{\mathcal{S}}^\Theta(\rF) \|_{L^p(\R^{d})}^p 
    \lesssim \sum_{{\bf{n}}\in\mathbb{Z}^{d}} \prod_{j=1}^m \|  f_j \mathbbm{1}_{\widetilde{Q}_{\bf{n}}} \|_{L^{p_j}(\R^{d})}^p.
  \end{align}
  We make use of H\"older's inequality on \eqref{ineq-221121 1949} to obtain
  \begin{align}
    \| \mathcal{A}_{\mathcal{S}}^\Theta(\rF)\|_{L^p(\R^{d})}^p 
    \lesssim \prod_{j=1}^m \Bigl( \sum_{{\bf{n}} \in \mathbb{Z}^{d}} \| f_j \mathbbm{1}_{\widetilde{Q}_{\bf{n}}} \|_{L^{p_j}(\R^{d})}^{p_j} \Bigr)^{\frac{p}{p_j}}.
  \end{align}
  Note that $\{\widetilde{Q}_{\bf{n}}\}_{{\bf{n}}\in\mathbb{Z}^{d}}$ is a finitely overlapping cover of $\R^{d}$.
  Therefore we have
  \begin{align}
    \| \mathcal{A}_{\mathcal{S}}^\Theta(\rF) \|_{L^p(\R^{d})} \lesssim  \prod_{j=1}^m \|f_j\|_{L^{p_j}(\R^{d})},
  \end{align}
where $\frac{1}{p} = \sum_{j=1}^m\frac{1}{p_j}$, and $(\frac{1}{p_1} , \dots, \frac{1}{p_m}) \in \mathrm{conv}(\mathcal{V}_{\kappa})$.

Thus, by showing Lemma~\ref{lem-L1 improving1}, we complete the proof of Proposition~\ref{prop-qbanach esti}.
\begin{proof}[Proof of Lemma~\ref{lem-L1 improving1}]
  Let $\mathfrak{a}$ be a symbol satisfying $|\mathfrak{a}(\xi)| \lesssim (1+|\xi|)^{-\rho}$ for some $\rho>0$.
  Then for $m=1$, it is well-known \cite{Stichartz, Littman} that 
  $T_\mathfrak{a}(f) = (\mathfrak{a} \widehat{f}\,)\check{\,}$ is bounded from $L^p(\R^d)$ to $L^{p'}(\R^d)$ 
  for $\frac{1}{p}+\frac{1}{p'}=1$, $p\in[1,2]$, and $\frac{1}{p} - \frac{1}{2}\leq \frac{1}{2}(\frac{\rho}{\rho+1})$.
  Let $\mathcal{S}$ be a hypersurface with $\kappa$ nonvanishing principal curvatures.
  For bilinear case $m=2$, by change of variables we have
  \begin{align*}
    \| \mathcal{A}_{\mathcal{S}}^\Theta(f,g) \|_1 
    &\leq \int_{\R^d} \int_{\mathcal{S}}\Big|f(x+\Theta_1y) g(x+\Theta_2 y)\Big| ~\mathrm{d}\sigma(y)~\mathrm{d}x\\
    &= \int_{\R^d} |f(x)| \int_{\mathcal{S}}\Big|g(x+(\Theta_2-\Theta_1) y)\Big| ~\mathrm{d}\sigma(y)~\mathrm{d}x\\
    &\leq \|f\|_p \|g\|_p,
  \end{align*}
  where the last inequality follows from H\"older's inequality and $\frac{1}{p} = \frac{\kappa+1}{\kappa+2}$.
  Thus for $m$-linear case, it follows that
  \begin{align*}
    \| \mathcal{A}_{\mathcal{S}}^\Theta(\rF) \|_1
    &\leq  \int_{\R^d} \int_{\mathcal{S}}\Big|\prod_{j=1}^m f_j(x+\Theta_jy) \Big| ~\mathrm{d}\sigma(y)~\mathrm{d}x\\
    &\leq \int_{\R^d} \int_{\mathcal{S}}\Big|f_{1}(x+\Theta_{1}y) f_{2}(x+\Theta_{2}y) \Big| ~\mathrm{d}\sigma(y)~\mathrm{d}x \times \prod_{3\leq j\leq m} \|f_{j}\|_\infty\\
    &\leq \| f_{1}\|_{p} \|f_{2}\|_{p}  \prod_{3\leq j\leq m} \|f_{j}\|_\infty,
  \end{align*}
  where $\frac{1}{p}= \frac{\kappa+1}{\kappa+2}$.
  Similarly, interchanging  the role of the functions and invoking multilinear interpolation we get the desired estimate.
\end{proof}

\subsection{Proof of Proposition~\ref{prop_qbesti_cM}}

\subsubsection{$L^1$- improving estimates \eqref{ineq_L1_impr}}
By translation $x \to x+y_m$, we reduce the $L^1$-norm of $\textsl{A}_{\Sigma}$ into $L^\infty$-norm of the following $(m-1)$-linear operator:
\begin{align}\label{m-1_reduction}
	\int_{\Sigma} \prod_{j=1}^{m-1} |f_j(x+y_m - y_j)|~\mathrm{d}\sigma_\Sigma(y).
\end{align}
By using the Fourier transform, we rewrite \eqref{m-1_reduction} as
\begin{equation}\label{ineq_230228_1810}
\begin{aligned}
	&\int_{\R^{(m-1)d}} e^{2\pi i x\cdot(\xi_1 +\cdots+\xi_{m-1})} \widehat{\mathrm{d}\sigma_\Sigma}(\xi', -\xi_1 - \cdots - \xi_{m-1}) \prod_{j=1}^{m-1} \widehat{|f_j|}(\xi_j) ~\mathrm{d}\xi',
\end{aligned}
\end{equation}
where $\xi' = (\xi_1, \dots, \xi_{m-1}) \in\R^{(m-1)d}$.
 
Since the hypersurface $\Sigma$ has $\kappa$ nonvanishing principal curvatures, therefore using the result of Littman \cite{Littmanprincipalcurvature} we get $|\widehat{\mathrm{d}\sigma_\Sigma}(\xi)| \lesssim (1 + |\xi|)^{-\kappa/2}$ for $\xi\in\R^{md}$. This implies that the symbol of \eqref{ineq_230228_1810} satisfies
$$
	\left|  \widehat{\mathrm{d}\sigma_\Sigma}(\xi', -\xi_1 - \cdots - \xi_{m-1})  \right| \lesssim (1+|\xi'|)^{-\kappa/2}.
$$
By applying H\"older's inequality on \eqref{ineq_230228_1810}, it is bounded above by
 \begin{equation*}
	\Big(\int_{\R^{(m-1)d}}  \prod_{j=1}^{m-1} |\widehat{|f_j|}(\xi_j)|^{p'} ~\mathrm{d}\xi'\Big)^{1/p'}\times\Big( \int_{\R^{(m-1)d}}   (1+|\xi'|)^{-\frac{\kappa p}2} ~\mathrm{d}\xi' \Big)^{1/p}
\end{equation*}
and the last term is finite if $ p > \frac{2d(m-1)}{\kappa}$.
Thus, for $ \frac{2d(m-1)}{\kappa}< p < 2$ we have 
\begin{equation}\label{ineq_230228_1820}
\begin{aligned}
	\Big| \int_\Sigma \prod_{j=1}^{m-1} f(x+y_m - y_j)~\mathrm{d}\sigma_{\Sigma}(y) \Big|
	\lesssim \prod_{j=1}^{m-1} \|\widehat{|f_j|}\|_{L^{p'}(\R^d)}.
\end{aligned}
\end{equation}
Together with $L^1$-norm of $f_m$,  for $\frac{2d(m-1)}{\kappa} < p\leq 2$ we have
\begin{align}\label{ineq_230228_1823}
	\Big\|\textsl{A}_{\Sigma}(\mathrm{F})\Big\|_{L^1(\R^d)} \lesssim \prod_{j=1}^{m-1} \|f_j\|_{L^{p}(\R^d)} \times \|f_m\|_{L^1(\R^d)}.
\end{align}
Symmetry of estimates \eqref{ineq_230228_1823} and multilinear interpolation yield that
\begin{align}\label{Mavg_improving}
	\Big\|\textsl{A}_{\Sigma}(\mathrm{F})\Big\|_{L^1(\R^d)} \lesssim \prod_{j=1}^{m} \|f_j\|_{L^{p_j}(\R^d)},
\end{align}
where $\frac{m+1}{2} \leq \sum_{j=1}^m \frac{1}{p_j} < \frac{2d+\kappa}{2d}$ and $1\leq p_j \leq 2$.

\subsubsection{Quasi-Banach space estimates \eqref{ineq_qba_cM}}
Since we obtain $L^1$-improving estimates for $\textsl{A}_{\Sigma}$, one can apply the argument of Subsection~\ref{pf_qbanach esti} to show that $\textsl{A}_{\Sigma}$ satisfies a H\"older-type multilinear estimates on $L^p(\R^d)$ for $\frac{1}{p} = \sum_{j=1}^m \frac{1}{p_j}$ with $p_j$ in \eqref{Mavg_improving}.
That is, we have
\begin{align}\label{Mavg_qbanach}
	\Big\|\textsl{A}_{\Sigma}(\mathrm{F})\Big\|_{L^p(\R^d)} \lesssim \prod_{j=1}^{m} \|f_j\|_{L^{p_j}(\R^d)},
\end{align}
where $\frac{m+1}{2} \leq \sum_{j=1}^m \frac{1}{p_j} < \frac{2d+\kappa}{2d}$ and $1\leq p_j \leq 2$.
This proves the quasi-Banach space estimates.

\subsubsection{smoothing estimates \eqref{ineq_reg_cM}}
For the Sobolev regularity estimates, note that $\textsl{A}_{\Sigma}$ is written in terms of Fourier multipliers.
	\begin{align}\label{ineq_230228_1250}
		 \textsl{A}_{\Sigma}(\mathrm{F})(x) 
		= \int_{\R^{md}} e^{2\pi i x\cdot(\xi_1+\cdots+\xi_m)} \widehat{\mathrm{d}\sigma_\Sigma}(\vec{\xi}\,) \prod_{j=1}^m \widehat{f}_j(\xi_j)~\mathrm{d}\vec{\xi}.
	\end{align}
	Moreover, we consider $f_1, \dots, f_m$ whose Fourier transforms are supported in $\{\xi \in \R^d : 2^{n_j-1}\leq |\xi|\leq 2^{n_j+1}\}$ with positive integers $n_j$, $j=1,\dots, m$, respectively.
	Since $\widehat{\mathrm{d}\sigma_\Sigma}$ satisfies the following limited decay condition:
	\begin{align}\label{limdecay}
		| \partial^\alpha \widehat{\mathrm{d}\sigma_\Sigma}(\vec{\xi}\,) | \lesssim (1+|\vec{\xi}\,|)^{-\kappa/2}\quad\text{for any multi-indices $\alpha$},
	\end{align}
	we are going to make use of one of main results of \cite{GHHP_2023} initial estiamtes.
	\begin{theorem}[Theorem 1.1, \cite{GHHP_2023}]\label{thm_initial}
		Let $m$ be a positive number with $m\geq2$ and $1<q<\frac{2m}{m-1}$.
		Set $M_q$ to be a positive integer satisfying 
		$$
			M_q>\frac{m(m-1)d}{2m-(m-1)q}.
		$$
		Suppose that $\mathfrak{m} \in L^q(\R^{md}) \cap C^{M_q}(\R^{md})$ with
		$$
			\| \partial^\alpha \mathfrak{m} \|_{L^\infty(\R^{md})}\leq D_0,\quad\text{for $|\alpha|\leq M_q$}.
		$$
		Then we have
		$$
			\|T_\mathfrak{m}(f_1, \dots, f_m)\|_{L^{2/m}(\R^d)}\lesssim D_0^{1- \frac{(m-1)q}{2m}} \| \mathfrak{m} \|_{L^q(\R^{md})}^{\frac{(m-1)q}{2m}} \prod_{j=1}^m\|f_j\|_{L^2(\R^d)}.
		$$
	\end{theorem}
	Note that $T_\mathfrak{m}(f_1, \dots, f_m)$ is a multilinear operator whose Fourier multiplier is $\mathfrak{m}$.
	Then by putting \eqref{limdecay} into Theorem~\ref{thm_initial}, we have
	\begin{align}
		\mathfrak{m}(\xi) &= \widehat{\mathrm{d}\sigma}(\xi)\prod_{j=1}^m\widehat{\psi}_{n_j}(\xi_j),\\
		D_0 &\simeq 1,\\
		\|\mathfrak{m}(\xi)\|_{L^q(\R^{md})} &\lesssim 2^{-|\mathbf{n}|\kappa/2} 2^{|\mathbf{n}|md/q}.
	\end{align}
	Since $q\in(1, \frac{2m}{m-1})$, for $f_1, \dots, f_m$ whose Fourier transforms are supported in $\mathbb{A}_{n_j} = \{\xi_j \in \R^d : 2^{n_j-1} \leq |\xi_j| \leq 2^{n_j+1}\}$ we have
	\begin{align}\label{multi_L2}
		\| A_\Sigma(\rF) \|_{L^{2/m}(\R^d)} \lesssim 2^{-|\mathbf{n}|(\frac{\kappa}{2} - \frac{(m-1)d}{2})} \prod_{j=1}^m \|f_j\|_{L^2(\R^d)}.
	\end{align}
	Note that we have trivial estimates for $1\leq p, p_1, \dots, p_m\leq \infty$ with $1/p = 1/p_1 +\cdots+1/p_m$, 
	\begin{align}\label{multi_trivial}
		\| A_\Sigma(\rF) \|_{L^{p}(\R^d)} \lesssim  \prod_{j=1}^m \|f_j\|_{L^{p_j}(\R^d)}.
	\end{align}
	By interpolating \eqref{multi_L2} and  \eqref{multi_trivial}, for any $1\leq p, p_1, \dots, p_m< \infty$ with $1/p = 1/p_1 +\cdots+1/p_m$
	there is an $\delta = \delta(p, \kappa, m, d)>0$ such that
	\begin{align*}
		\| A_\Sigma(\rF) \|_{L^{p}(\R^d)} \lesssim  2^{-\delta|\mathbf{n}|}\prod_{j=1}^m \|f_j\|_{L^{p_j}(\R^d)},
	\end{align*}
	when $\widehat{f_j}$ is supported in $\mathbb{A}_{n_j}$ for $j=1, \dots, m$. 
	This proves \eqref{ineq_reg_cM}.

\section{A Nonlinear Brascamp-Lieb inequality approach to $L^p$-improving estimates for $\cA_\cS^\Theta$}\label{sec-nlbl}

\subsection{Nonlinear Brascamp-Lieb inequality}

Let $f_j$ be nonnegative integrable funtions, $L_j : \R^{d} \to \R^{d_j}$ be linear surjections, and $c_j \in [0,1]$ for $j=1,\cdots, m$.
We also identify a finite dimensional Hilbert sapce $H$ and a Euclidean space $\R^n$, for instance we let $H = \R^d$ and $H_j = \R^{d_j}$.
Then we can consider the linear Brascamp-Lieb inequality 
\begin{align}\label{LBL}
  \int_{\R^d} \prod_{j=1}^m \Bigl(f_j (L_j x)\Bigr)^{c_j} \, ~\mathrm{d}x 
  \leq \mathrm{BL}(\mathbf{L}, \mathbf{c}) \prod_{j=1}^m \Bigl(\int_{\R^{d_j}} f_j(x_j) ~\mathrm{d}x_j \Bigr)^{c_j},
\end{align}
where $\mathbf{L} = (H, \{H_j\}_{1\leq j\leq m}, \{L_j\}_{1\leq j\leq m})$, $\mathbf{c} = (c_1, \dots, c_m)$, and $\mathrm{BL}(\mathbf{L}, \mathbf{c})$ is the smallest such constant.
Here, we call $(\mathbf{L}, \mathbf{c})$ a Brascamp-Lieb datum, and $\mathrm{BL}(\mathbf{L}, \mathbf{c})$ a Brascamp-Lieb constant. 
There have been studies on nonlinear generalizations of the Brascamp-Lieb inequality.
Bennett, Carbery, and Wright \cite{BCW2005} showed that \eqref{LBL} holds for $d_j = d-1$ and $c_j = \frac{1}{m-1}$ when $L_j$'s are smooth submersions supported in a sufficiently small neighborhood.
They also proved that $L_j$'s could be $C^3$ mappings under certain transversality conditions on the submersions. 
Later, Bennett and Bez \cite{BB2010} extended the results of \cite{BCW2005} to general $d_j$ and $C^{1,\beta}$ mappings.
Recently, Bennett, Bez, Buschenhenke, Cowling, and Flock \cite{BBBCF2020} proved the following nonlinear Brascamp-Lieb inequality.
\begin{theorem}\cite[Theorem 1.1]{BBBCF2020}\label{thm-NLBL}
  Let $(\mathbf{L}, \mathbf{c})$ be a Brascamp-Lieb datum.
  Suppose that $B_j : \R^d \to \R^{d_j}$ are $C^2$ submersions in a neighborhood of a point $x_0$ and $~\mathrm{d}B_j(x_0) = L_j$, $j=1,2,\cdots ,m$.
  Then for each $\varepsilon>0$ there exists a neighborhood $U$ of $x_0$ such that
  \begin{align}
    \int_U \prod_{j=1}^m \Bigl(f_j (B_j (x))\Bigr)^{c_j}\, ~\mathrm{d}x 
    \leq (1+\varepsilon) \mathrm{BL}(\mathbf{L}, \mathbf{c}) \prod_{j=1}^m \Bigl(\int_{\R^{d_j}} f_j(x_j) ~\mathrm{d}x_j\Bigr)^{c_j}.
  \end{align}
\end{theorem}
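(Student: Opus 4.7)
The plan is to prove this local nonlinear extension by an induction-on-scales argument that bootstraps the linear Brascamp--Lieb inequality off the linearization $L_j = \mathrm{d}B_j(x_0)$. The starting observation is that, by $C^2$ regularity, on a ball of radius $r$ around $x_0$ one has $B_j(x) = B_j(x_0) + L_j(x - x_0) + O(r^2)$ uniformly in $j$, so at infinitesimally small scales the $B_j$ are $C^1$-close to affine maps, for which the linear Brascamp--Lieb inequality gives exactly the constant $\mathrm{BL}(\mathbf{L}, \mathbf{c})$.

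First I would set up the relevant bootstrap quantity. Let $C(r)$ denote the best constant such that the desired inequality, with $L_j$ replaced by $\mathrm{d}B_j(x_\ast)$, holds on every ball of radius $r$ centered at any point $x_\ast$ in a fixed neighborhood of $x_0$. A translation and the affine rescaling $y = (x-x_\ast)/r$ transport the problem on such a ball to the unit ball with rescaled maps $\tilde B_j(y) = r^{-1}(B_j(x_\ast + ry) - B_j(x_\ast))$, which satisfy $\tilde B_j(y) = \mathrm{d}B_j(x_\ast)y + O(r)$ in $C^1$. This displays the inequality at scale $r$ as a $C^1$-small perturbation of the flat linear Brascamp--Lieb inequality on the unit ball.

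Next I would carry out the induction on scales. Cover a ball of radius $r$ by finitely overlapping balls of a much smaller radius $\rho$, apply the inductive hypothesis on each sub-ball with its own linearization $\mathrm{d}B_j(x_i)$, and sum the pieces. Reassembling back to the original integral requires a stability estimate for the Brascamp--Lieb constant as a function of the data, of the form $\mathrm{BL}(\mathrm{d}B(x_i), \mathbf{c}) \leq (1 + O(r))\,\mathrm{BL}(\mathbf{L}, \mathbf{c})$, together with a careful handling of overlap factors. The outcome is a recursion of the shape $C(r) \leq (1 + O(r)) C(\rho)$, whose iteration produces $C(r) \to \mathrm{BL}(\mathbf{L}, \mathbf{c})$ as $r \to 0$ and yields the claimed $(1+\varepsilon)$ bound.

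The main obstacle, and the core of the argument, is the passage from $f_j(B_j(x))$ back to $f_j(L_j(x - x_\ast) + B_j(x_\ast))$ inside the integral: since $f_j$ is only assumed integrable, no pointwise comparison is available. The standard remedy is to mollify each $f_j$ at a scale slightly larger than the $O(r^2)$ quadratic error, producing $f_j^{\mathrm{sm}}$ with $f_j(B_j(x)) \leq f_j^{\mathrm{sm}}(L_j(x-x_\ast) + B_j(x_\ast))$ on the ball and with $\int f_j^{\mathrm{sm}} \leq (1+\varepsilon)\int f_j$. Making this quantitative, and arranging that the accumulated $(1+O(r))$ factors multiply to $(1+\varepsilon)$ rather than blow up as the scale shrinks, is the delicate technical point; once it is in place the bootstrap closes and produces the stated local nonlinear Brascamp--Lieb inequality.
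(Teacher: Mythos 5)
This statement is cited from \cite[Theorem 1.1]{BBBCF2020}; the paper quotes it and does not prove it, so there is no in-paper proof to compare against. Your sketch does capture the high-level philosophy of the original Bennett--Bez--Buschenhenke--Cowling--Flock argument (induction on scales, linearization, affine rescaling), but as written it has two substantive gaps that the original proof spends most of its effort closing.

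First, the naive recursion $C(r) \leq (1+O(r))\,C(\rho)$ cannot be iterated directly for a general Brascamp--Lieb datum. If the datum admits a proper critical subspace, the rescaling step does not preserve the structure needed to run the induction: different sub-balls linearize to nearby but genuinely different data, and the near-extremizing Gaussians for those data need not be comparable. The original argument first \emph{reduces to simple data} (data with no proper critical subspaces) using the Bennett--Carbery--Christ--Tao factorization theory --- precisely the structure appearing in Lemma~\ref{lem-split} of this paper --- and only then runs the multiscale induction on the simple pieces. Your proposal omits this reduction entirely, and it is not an optional refinement: without it the ``reassembly'' step fails because the constants on the sub-balls do not telescope. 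Second, you invoke the stability estimate $\mathrm{BL}(\mathrm{d}B(x_i),\mathbf{c}) \leq (1+O(r))\,\mathrm{BL}(\mathbf{L},\mathbf{c})$ as if it were a routine perturbation fact, but continuity of the Brascamp--Lieb constant in the underlying linear data is itself a deep theorem of Bennett--Bez--Cowling--Flock; it must be cited as an input or proved, and its quantitative form (local Lipschitz/H\"older modulus) is exactly what controls whether the infinite product of $(1+O(r_k))$ factors over the dyadic scales $r_k$ converges. Your final paragraph correctly identifies the mollification-and-accumulation issue as ``the delicate technical point,'' but the sketch does not actually supply the mechanism that makes the accumulated factors stay bounded; that mechanism is the combination of the simple-data reduction and the quantitative stability estimate, neither of which is present.
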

Although Theorem~\ref{thm-NLBL} is stated with $C^2$ submersions, the proof of \cite{BBBCF2020} guarantees that the theorem still holds if one takes $C^{1+\theta}$ submersions for any $\theta>0$.
It is known \cite{BCCT2008} that $\mathrm{BL}(\mathbf{L}, \mathbf{c})$ is finite if and only if the following conditions hold.
\begin{align}
  \dim(V) &\leq \sum_{j=1}^m c_j \dim(L_j V)\quad\text{for all subspaces $V$ of $\R^d$},\label{condi-transv}\\
  d &= \sum_{j=1}^m c_j d_j.\label{condi-scaling}
\end{align}
These conditions are called the transversality condition and the scaling condition, respectively.
We also present necessary conditions for finiteness of $\mathrm{BL}(\mathbf{L}, \mathbf{c})$:
\begin{align*}
  \bigcap_{j=1}^m \ker(L_j) = \{0\}, \quad \sum_{j=1}^m c_j \geq1.
\end{align*}
But, it is not simple to check \eqref{condi-transv} for a given Brascamp-Lieb datum.
The following lemma may be useful in such verification.
First, we say a proper subspace $V_c$ of $\R^d$ is a critical subspace if it satisfies 
$$
  \dim(V_c) = \sum_{j=1}^m c_j \dim(L_j V_c).
$$
For a given subspace $V_c$, we split the Brascamp-Lieb datum into two parts, $(\mathbf{L}_{V_c}, \mathbf{c})$ and $(\mathbf{L}_{V_c^\perp}, \mathbf{c})$ as follows:
\begin{align*}
  \mathbf{L}_{V_c} &= (V_c, \{L_j V_c\}_{1\leq j\leq m}, \{L_{j, {V_c}}\}_{1\leq j\leq m}),\\
  \mathbf{L}_{V_c^\perp}&=(H/V_c, \{H_j/(L_j V_c)\}_{1\leq j\leq m}, \{L_{j, {H/V_c}}\}_{1\leq j\leq m}),
\end{align*}
where $H/V_c = V_c^\perp$ and
\begin{align*}
	L_{j, V_c} &: V_c \to L_j V_c,\\
	L_{j, H/V_c} &: H/V_c \to H_j/(L_j V_c).
\end{align*}
In this paper, we choose $H = \R^d\times\R^k$ and $H_j = \R^d$.

\begin{lemma}[Lemma 4.6, \cite{BCCT2008}]\label{lem-split}
  Let $V_c$ be a critical subspace.
  Then $\mathrm{BL}(\mathbf{L}, \mathbf{c})$ is finite 
  if and only if $(\mathbf{L}_{V_c}, \mathbf{c})$ and $(\mathbf{L}_{V_c^\perp}, \mathbf{c})$ 
  satisfy \eqref{condi-transv} and \eqref{condi-scaling} for any subspace $V$ of $V_c$ and $V_c^\perp$, respectively.
\end{lemma}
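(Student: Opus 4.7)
The plan is to invoke the standard Bennett--Carbery--Christ--Tao characterization that $\mathrm{BL}(\mathbf{L},\mathbf{c})<\infty$ is equivalent to the transversality and scaling conditions \eqref{condi-transv}--\eqref{condi-scaling} holding on every subspace of $\R^d$, and then show that these two conditions for $(\mathbf{L},\mathbf{c})$ are jointly equivalent to the analogous conditions holding separately on the restricted datum $(\mathbf{L}_{V_c},\mathbf{c})$ and the quotient datum $(\mathbf{L}_{V_c^\perp},\mathbf{c})$. The scaling part decomposes immediately: since $d=\dim V_c+\dim V_c^\perp$ and $d_j = \dim(L_j V_c)+\dim(H_j/L_j V_c)$, the global scaling relation $d=\sum_j c_j d_j$ is the sum of the criticality identity $\dim V_c = \sum_j c_j \dim(L_j V_c)$ and the sub-scaling condition on $V_c^\perp$, so these are equivalent.

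For transversality, I would handle the forward direction by first noting that any $W\subset V_c$ is a subspace of $\R^d$ on which $L_{j,V_c}W = L_j W$, so the global transversality inequality restricts directly to give the sub-transversality on $V_c$. For a subspace $U\subset V_c^\perp$, I would lift to $V=V_c+\widetilde U\subset \R^d$, apply global transversality at $V$, split $\dim L_j V = \dim L_j V_c + \dim(L_j V / L_j V_c)$, and subtract the criticality identity to isolate the required quotient transversality $\dim U\le \sum_j c_j \dim L_{j,V_c^\perp}U$.

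The backward direction is the heart of the argument. Given any subspace $V\subset\R^d$, I would set $V_1 = V\cap V_c$ and $V_2 = \pi(V)\subset V_c^\perp$, where $\pi$ is the quotient map modulo $V_c$, so that $\dim V = \dim V_1 + \dim V_2$. Applying the two sub-transversality hypotheses to $V_1$ and $V_2$ gives $\dim V_1 \le \sum_j c_j \dim L_j V_1$ and $\dim V_2 \le \sum_j c_j \dim \pi_j(L_j V)$, where $\pi_j\colon H_j\to H_j/L_j V_c$ is the induced projection. Adding these and using $\dim L_j V_1 + \dim \pi_j(L_j V)\le \dim L_j V$, which follows from the inclusion $L_j V_1 \subset L_j V\cap L_j V_c = \ker(\pi_j|_{L_j V})$ combined with rank--nullity applied to $\pi_j|_{L_j V}$, yields the global transversality inequality $\dim V\le \sum_j c_j \dim L_j V$.

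The main delicate point will be that the inclusion $L_j V_1 \subset L_j V\cap L_j V_c$ is in general strict, so the key dimension bound is not an equality; however only an inequality is needed to close the count, so this causes no real trouble. Beyond this bookkeeping of subspaces, quotients, and induced maps, the argument is purely linear-algebraic, and I do not anticipate any further obstacle.
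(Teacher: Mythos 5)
Your argument is sound, but note that the paper itself offers no proof of this statement: it is quoted verbatim as Lemma 4.6 of \cite{BCCT2008}, so the only comparison available is with the original Bennett--Carbery--Christ--Tao argument, and yours is genuinely different. You take as a black box the finiteness characterization (that $\mathrm{BL}(\mathbf{L},\mathbf{c})<\infty$ is equivalent to \eqref{condi-transv} together with \eqref{condi-scaling}), which the paper also quotes, and reduce the lemma to pure linear algebra: the scaling identity splits through the criticality of $V_c$, the forward direction follows by restricting to subspaces $W\subset V_c$ and by lifting subspaces of the quotient to $V_c+\widetilde U$ and subtracting the criticality identity, and the converse follows from $\dim V=\dim(V\cap V_c)+\dim\pi(V)$ together with the rank--nullity bound $\dim L_j(V\cap V_c)+\dim\pi_j(L_jV)\le\dim L_jV$; your remark that $L_j(V\cap V_c)\subset L_jV\cap L_jV_c$ may be strict but that only the inequality is needed is exactly the right observation, and all of these steps check out. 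By contrast, BCCT prove the lemma directly at the level of the constant, establishing the factorization $\mathrm{BL}(\mathbf{L},\mathbf{c})=\mathrm{BL}(\mathbf{L}_{V_c},\mathbf{c})\,\mathrm{BL}(\mathbf{L}_{V_c^\perp},\mathbf{c})$ for critical $V_c$ by testing with tensor-type functions and a Fubini argument; this is stronger (it quantifies the constant) and is logically prior, since in \cite{BCCT2008} the splitting lemma is an ingredient in the inductive proof of the very finiteness characterization you invoke. So your derivation is a correct and efficient way to verify the statement given the tools the paper already assumes, but it should be flagged as a consequence of the BCCT finiteness theorem rather than an independent proof of their Lemma 4.6, and it does not recover the product formula for the constant.
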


Now we will prove Theorem ~\ref{thm-lp improving}. We first decompose $\mathcal{S}^k$ into a finite cover $\{\mathcal{S}_\tau^k\}$ for which $\mathcal{A}_{\mathcal{S}^k}(\rF)(x)$ can be written as a finite summation of the following operators.
\begin{align}
  \mathcal{A}_{\mathcal{S}_\tau^k}^\Theta(\rF)(x) = \int_{\R^k} \prod_{j=1}^m f_j (x + \Theta_j (y', \Phi_\tau(y')) \chi_\tau(y')~\mathrm{d}y',
\end{align}
where $\Phi_\tau : \R^k \to \R^{d-k}$ is a $C^2$-submersion and $\chi_\tau$ is a smooth cut-off function.

To simplify our proof, we consider a more general $m$-linear operator $T_K^{\vec{B}}$.
Suppose $B_j : \R^{d}\times \R^{k} \to \R^{d}$ be $C^2$ submersions
and $L_j = ~\mathrm{d}B_j(0, 0)$ with $j=1, \dots, m$.
Then $T_{K}^{\vec{B}}(\rF)$ is given by 
\begin{align}
  T_K^{\vec{B}}(\rF)(x) = \int_{\R^{k}} \prod_{j=1}^m f_j(B_j(x,y)) \, K(y) ~\mathrm{d}y, \quad y\in \R^{k}, x\in \R^{d},
\end{align}
where $K$ is a nonnegative bounded function supported in a ball $B(0,\varepsilon) \subset \R^{k}$.
Note that $\mathcal{A}_{\mathcal{S}_\tau^k}^\Theta(\rF)$ is an example of $T_K^{\vec{B}}$ for $K = \chi_\tau$ and $B_j(x,y') = x+\Theta_j(y', \Phi_\tau(y'))$.
Moreover, we take $c_j =1/p_j$ for $j=1, \dots, m$ and $c_{m+1} = 1/p'$ where $1/p = 1/p_1+\cdots+1/p_m-k/d$. 
Then we prove the following proposition.
\begin{proposition}\label{prop-improve esti}
  Let $\frac{1}{p} = \sum_{j=1}^m \frac{1}{p_j} - \frac{k}{d}$, $1 \le p \le \frac{d}{d-k}$, $p_j \in [1,\infty)$, $j=1, \dots, m$ satisfy $\sum_{j=1}^m\frac{1}{p_j} \geq1 $.
  Suppose $(\mathbf{L}, \mathbf{p})$ is a Brascamp-Lieb datum for 
  \begin{align*}
  	\mathbf{L} &= (\R^{d}\times\R^{k}, \{\R^{d}\}_{j=1}^{m+1},\{L_j\}_{j=1}^{m+1}),\\
	L_j &= \mathrm{d}B_j(0,0), j=1,\dots, m,\quad L_{m+1} = ~\mathrm{d}\pi_{\R^{d}},\\
  	\mathbf{p} &= \Big(\frac{1}{p_1}, \dots, \frac{1}{p_m}, \frac{1}{p'}\Big).
  \end{align*}
  Then we have 
  $$
  \| T_K^{\vec{B}}(\rF) \|_{L^p(\R^{d})} \lesssim (1+\varepsilon) \mathrm{BL}(\mathbf{L}, \mathbf{p}) \prod_{j=1}^m \|f_j\|_{L^{p_j}(\R^{d})}.
  $$
\end{proposition}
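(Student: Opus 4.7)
The plan is to dualize the stated inequality, recast the resulting $(m+1)$-linear form in a shape suited to the nonlinear Brascamp-Lieb inequality of Theorem~\ref{thm-NLBL} via the standard homogenization (power) trick, and then globalize the local output of Theorem~\ref{thm-NLBL} by the unit-cube decomposition already used in Subsection~\ref{pf_qbanach esti}.

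By duality, the desired estimate is equivalent to
$$
\Lambda(f_1,\dots,f_m,g) := \int_{\R^d}\int_{\R^k} \prod_{j=1}^m f_j(B_j(x,y))\,g(x)\,K(y)\,dy\,dx \lesssim \mathrm{BL}(\mathbf{L},\mathbf{p}) \prod_{j=1}^m \|f_j\|_{L^{p_j}}\,\|g\|_{L^{p'}}
$$
for arbitrary $g\in L^{p'}(\R^d)$. I introduce $B_{m+1}:\R^{d}\times\R^{k}\to\R^{d}$ defined by $B_{m+1}(x,y):=x$, so that $L_{m+1}=dB_{m+1}(0,0)=d\pi_{\R^d}$ is precisely the $(m+1)$-st linear map in the datum $\mathbf{L}$, and I let $g$ play the role of an $(m+1)$-st factor $f_{m+1}$. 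With $c_j:=1/p_j$ for $j\le m$ and $c_{m+1}:=1/p'$, the scaling identity $d+k = d\sum_{j=1}^{m+1} c_j$ required of a Brascamp-Lieb datum becomes exactly the proposition's exponent condition $1/p = \sum_{j=1}^m 1/p_j - k/d$.

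Next, I apply the power trick: put $F_j := |f_j|^{p_j}$ for $j\le m$ and $F_{m+1} := |g|^{p'}$, so that
$$
\prod_{j=1}^{m+1}|f_j(B_j(x,y))| = \prod_{j=1}^{m+1} F_j(B_j(x,y))^{c_j},\qquad \|F_j\|_{L^1}^{c_j} = \|f_j\|_{L^{p_j}}\text{ for }j\le m,\text{ and }\|F_{m+1}\|_{L^1}^{c_{m+1}} = \|g\|_{L^{p'}}.
$$
Since the $B_j$ are $C^2$ submersions with $dB_j(0,0)=L_j$, Theorem~\ref{thm-NLBL} applied at the basepoint $(0,0)\in\R^{d}\times\R^{k}$ produces a neighborhood $U$ on which
$$
\int_U \prod_{j=1}^{m+1} F_j(B_j(x,y))^{c_j}\,dx\,dy \le (1+\varepsilon)\,\mathrm{BL}(\mathbf{L},\mathbf{p}) \prod_{j=1}^{m+1} \|F_j\|_{L^1}^{c_j},
$$
which, after reinterpreting the right-hand $L^1$ norms, is exactly the target bound for the contribution to $\Lambda$ coming from $U$.

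The main obstacle is bridging this purely local output of Theorem~\ref{thm-NLBL} to a global $L^p$-bound on $\R^d$. I plan to resolve it by the cube-localization argument of Subsection~\ref{pf_qbanach esti}: shrink $\varepsilon$ so that $\supp K$ fits inside the $\R^{k}$-slice of $U$, decompose $\R^d = \bigsqcup_{{\bf n}\in\Z^d} Q_{\bf n}$ into unit cubes, and observe that for $x\in Q_{\bf n}$ and $y\in\supp K$ the images $B_j(x,y)$ all lie in a fixed enlargement $\widetilde Q_{\bf n}$. Replacing each $f_j$ by $f_j\mathbbm{1}_{\widetilde Q_{\bf n}}$ and $g$ by $g\mathbbm{1}_{\widetilde Q_{\bf n}}$, applying the local NLBL at the translated basepoint $({\bf n},0)$ to each cube (uniformly, thanks to the translation-invariant structure of the $B_j$ arising in our applications, which keeps the linearization and hence the Brascamp-Lieb constant independent of $\bf n$), and then summing in $\bf n$ via H\"older's inequality on $\ell^{q_j}$-sequences with $q_j\ge p_j$ chosen so that $\sum_{j=1}^{m+1}1/q_j = 1$ (available since $\sum_{j=1}^m 1/p_j + 1/p' = 1+k/d \ge 1$), together with the finite-overlap property of $\{\widetilde Q_{\bf n}\}$, yields the global bound on $\Lambda$ and completes the proof.
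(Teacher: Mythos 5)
Your argument is essentially the same as the paper's: dualize with $g\in L^{p'}$, reinterpret the $(m+1)$-linear form via the power trick $F_j=|f_j|^{p_j}$, $F_{m+1}=|g|^{p'}$ and the extra submersion $B_{m+1}=\pi_{\R^d}$ so that Theorem~\ref{thm-NLBL} applies, then globalize via the unit-cube decomposition and a H\"older/finite-overlap step (the paper first drops the $g$-factor using $\|g\|_{p'}\le1$ and chooses $\sum 1/m_j=1$ with $m_j\ge p_j$, while you keep $g$ in the H\"older; either works under the stated hypotheses). Your explicit remark that the uniformity of the NLBL neighborhood across cubes is guaranteed by the translation-invariance of the $B_j$'s in the intended application is a welcome precision that the paper's proof leaves implicit.
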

Proposition~\ref{prop-improve esti} states that the Brascamp-Lieb inequality implies an $L^p$ improving estimates.

\begin{proof}
Since $p\geq1$, by making use of duality we get 
\begin{align*}
	\| T_K^{\vec{B}}(\rF)\|_{L^p(\mathbb{R}^{d})} =  \sup_{\|g\|_{p'} \leq 1} \int_{\mathbb{R}^{d}} T_K^{\vec{B}}(\rF)(x) g(x) ~\mathrm{d}x.
\end{align*}
 Now we choose $g \in L^{p'}(\mathbb{R}^{d})$ such that $\|g\|_{L^{p'}(\mathbb{R}^{d})} \leq 1$ 
 As in Section~\ref{section 2}, we decompose $\R^d$ into countable union of cubes ${Q}_{\bf{n}}(\varepsilon)$, where $Q(\varepsilon)$ is a cube centered at the origin with side length $\varepsilon$ and ${Q}_{\bf{n}}(\varepsilon)$ denotes $\varepsilon{\bf{n}}$ translation of $Q(\varepsilon)$ for ${\bf{n}}\in\mathbb{Z}^d$.
 Then it follows that
\begin{align*}
	\int_{\mathbb{R}^{d}} T_K^{\vec{B}}(\rF)(x) g(x) ~\mathrm{d}x
	&=  \sum_{{\bf{n}}\in\mathbb{Z}^{d}} \int_{{Q}_{\bf{n}}(\varepsilon)} \int_{\R^{k}}  \Big(\prod_{j=1}^m f_j (B_j(x,y)) \Big)   g(x) K(y)  ~\mathrm{d}y~\mathrm{d}x \\
	&=  \sum_{{\bf{n}}\in\mathbb{Z}^{d}} \int_{[-\varepsilon/2,\varepsilon/2)^d} \int_{\R^{k}}  \Big(\prod_{j=1}^m f_j (B_j(x+\varepsilon{\bf{n}},y)) \Big)   g(x+\varepsilon{\bf{n}}) K(y)  ~\mathrm{d}y~\mathrm{d}x \\
	&= \sum_{{\bf{n}}\in\mathbb{Z}^{d}} \int_{[-\varepsilon/2,\varepsilon/2)^d} \int_{\R^{k}}  \Big(\prod_{j=1}^m \tau_{\varepsilon\bf{n}}[f_j] (B_j(x,y)) \Big)   \tau_{\varepsilon\bf{n}}[g](x) K(y)  ~\mathrm{d}y~\mathrm{d}x,
\end{align*}
where $\tau_{\varepsilon\bf{n}}[f](x) = f(x+\varepsilon{\bf{n}})$.
Then we apply Theorem \ref{thm-NLBL} to $\tau_{\varepsilon\bf{n}}[f_j]^{p_j}, \tau_{\varepsilon\bf{n}}[g]^{p'}$ together with additional mapping $L_{m+1} = ~\mathrm{d}\pi_{\R^d}$, which yields
\begin{align}\label{240102_1958}
	\int_{\mathbb{R}^{d}} T_K^{\vec{B}}(\rF)(x) g(x) ~\mathrm{d}x
	\leq(1+\varepsilon)  \mathrm{BL}(\mathbf{L}, \mathbf{p})  \sum_{{\bf{n}}\in\mathbb{Z}^{d}} \Big(\prod_{j=1}^m \|\tau_{\varepsilon\bf{n}}[f_j]\|_{L^{p_j}(\widetilde{Q}(\varepsilon))} \Big) \|\tau_{\varepsilon\bf{n}}[g]\|_{L^{p'}(\widetilde{Q}(\varepsilon))}
\end{align}
whenever
\begin{align}\label{condi-transv-pf}
	\dim(V) \leq \sum_{j=1}^m \frac{\dim(dB_j(0_x,0_y)(V))}{p_j} + \frac{\dim(~\mathrm{d}\pi_{\mathbb{R}^{d}}(V))}{p'}
\end{align}
for every subspace $V$ of $\R^{d}\times \R^{k}$ together with
\begin{align}\label{condi-scaling-pf}
	\frac{k}{d} + \frac{1}{p} = \sum_{j=1}^m \frac{1}{p_j}.
\end{align}
Note that $\varepsilon$ in \eqref{240102_1958} is uniform in $\bf{n}$ due to $B_j(x+\varepsilon{\bf{n}},y) = B_j(x,y)+\varepsilon{\bf{n}}$ and also that $\tilde{Q}$ denotes a cube whose side length is 3 times of that of $Q$ with the same center.

We choose $g$ such that $\|g\|_{p'}\leq1$, so we ignore $\|g\|_{L^{p'}(\widetilde{Q}_{\bf{n}})}$,
and that $\|\tau_{\varepsilon\bf{n}}[f_j]\|_{L^{p_j}(\widetilde{Q}(\varepsilon))} = \|f_j\|_{L^{p_j}(\widetilde{Q}_{\bf{n}}(\varepsilon))}$.
Thus by H\"older's inequality we have
\begin{align*}
	\sum_{{\bf{n}}\in\mathbb{Z}^{d}} \prod_{j=1}^m \|f_j\|_{L^{p_j}(\widetilde{Q}_{\bf{n}}(\varepsilon))} 
  \lesssim  \prod_{j=1}^m  \Big\| \|f_j\|_{L^{p_j}(\widetilde{Q}_{\bf{n}}(\varepsilon))} \Big\|_{\ell^{r_j}(\mathbb{Z}^d)}
\end{align*}
for $\sum_{j=1}^m \frac{1}{r_j} =1$ with $1\leq r_j \leq \infty$.
From $\sum_{j=1}^m \frac{1}{p_j} \geq1$, one can choose $r_j$'s such that $\frac{1}{r_j}\leq \frac{1}{p_j}$ for each $j=1, \dots, m$, 
then we use $\ell^{p_j} \hookrightarrow \ell^{r_j}$ embedding to obtain
\begin{align}\label{240102_2026}
\begin{split}
	\int_{\mathbb{R}^{d}} T_K^{\vec{B}}(\rF)(x) g(x) ~\mathrm{d}x
	&\leq (1+\varepsilon)  \mathrm{BL}(\mathbf{L}, \mathbf{p}) \prod_{j=1}^m \Big\| \|f_j\|_{L^{p_j}(\widetilde{Q}_{\bf{n}}(\varepsilon))} \Big\|_{\ell^{r_j}(\mathbb{Z}^d)} \\
	&\leq (1+\varepsilon)  \mathrm{BL}(\mathbf{L}, \mathbf{p})  \prod_{j=1}^m \Big\| \|f_j\|_{L^{p_j}(\widetilde{Q}_{\bf{n}}(\varepsilon))} \Big\|_{\ell^{p_j}(\mathbb{Z}^d)}.
\end{split}
\end{align}
Since $\widetilde{Q}_{\bf{n}}(\varepsilon)$ are finitely overlapped, taking supremum over $\|g\|_{p'}\leq1$ in \eqref{240102_2026} gives
\begin{align}
	\|T_K^{\vec{B}}(\rF)\|_{L^p(\mathbb{R}^{d})}\lesssim  (1+\varepsilon) \mathrm{BL}(\mathbf{L}, \mathbf{p})  \prod_{j=1}^m \| f_j\|_{L^{p_j}(\mathbb{R}^{d})}
\end{align}
for desired $p, p_1, \dots, p_m$.
\end{proof}

Now we present the proof of Theorem~\ref{thm-lp improving}.
\subsection{Proof of Theorem~\ref{thm-lp improving}}

%Now we start to prove Theorem~\ref{thm-lp improving}.
There is a $C^2$ mapping $\Phi : \R^{k} \to \R^{d_c}$ for $d_c = d - k$ such that 
$\cS^k$ is locally a graph $\{(y', \Phi(y'))\in \R^{d}\}$.
Then in Proposition~\ref{prop-improve esti} we let $B_j(x,y') = x + \Theta_j (y', \Phi(y'))$ for $\Phi = (\phi^1, \dots, \phi^{d_c})$, $j=1, \dots, m$.
For $j = m+1$, let $B_{m+1} = \pi_{\R^d}$, where $\pi_{\R^d} : \R^d\times \R^k \to \R^{d}$ is a projection onto $x$-variable in $\R^{d}$.
For $j=1, \dots, m$, we define $L_j := ~\mathrm{d}B_j(0,0)$, which is given by
\begin{align}
  \begin{bmatrix}
    I_{d} &\Theta_j \nabla|_{y'=0}
    \begin{bmatrix}
      y'_1\\
      \vdots\\
      y'_{k}\\
      \phi^1(y')\\
      \vdots\\
      \phi^{d-k}(y')
    \end{bmatrix}
  \end{bmatrix}
  =
  \begin{bmatrix}
    I_{d} & \Theta_j
  \begin{bmatrix}
    1  &   0  &   \dots &   0  \\
    0  &   1  &   \dots &   0  \\
    \vdots&\vdots&\ddots&\vdots\\
    0  &   0  &   \dots &   1  \\
    \phi_{y'_1}^1(0) & \phi_{y'_2}^1(0)&\dots & \phi_{y'_{k}}^1(0)\\
    \vdots &\vdots &\ddots&\vdots\\
    \phi_{y'_1}^{d_c}(0) & \phi_{y'_2}^{d_c}(0) & \dots &\phi_{y'_{k}}^{d_c}(0)
  \end{bmatrix}
\end{bmatrix},
\end{align} 
where $I_{d}$ denotes the $d \times d$ identity matrix.
If there is no confusion, we simply write
\begin{align}
  L_j = 
  \begin{bmatrix}
    I_{d} & \Theta_j 
    \begin{bmatrix}
      I_{k}\\
      ~\mathrm{d} \Phi (0)
    \end{bmatrix}
  \end{bmatrix}
  ,\quad ~\mathrm{d}\Phi = (~\mathrm{d}\phi^1, \dots, ~\mathrm{d}\phi^{d_c}).
\end{align}
Without loss of generality we assume that $~\mathrm{d}\Phi(0)$ is a $d_c \times k$ zero matrix.
That is, we have for $j=1, \dots, m$
\begin{align}
  L_j = 
  \begin{bmatrix}
    I_{d} & \Theta_j^1
  \end{bmatrix}
  ,\quad \Theta_j = 
  \begin{bmatrix}
    \Theta_j^1 & \Theta_j^2
  \end{bmatrix}
  ,
\end{align}
where $\Theta_j^1$ and $\Theta_j^2$ are $d\times k$ and $d\times d_c$ matrices, respectively.
Since $\Theta_j^1$ has $k$ linearly independent columns, its rank is $k$.
In the case of $j=m+1$, we have
\begin{align}
~\mathrm{d}\pi_{\R^d} = 
\begin{bmatrix}
  I_{d} & Z_{d}
\end{bmatrix},
\end{align}
where $Z_{d}$ means all $d \times d$ elements are zero.
We show that $\mathbf{L} = (L_1, \dots, L_m, ~\mathrm{d}\pi_{\R^d})$ and $\mathbf{p} = (\frac{1}{m}, \dots, \frac{1}{m}, \frac{k}{d})$ 
are Brascamp-Lieb data by making use of Lemma~\ref{lem-split}.

Let $K_\pi = \ker(\pi_{\R^d})= \{(0, y') \in \R^d \times \R^k : y' \in \R^k \}$, which is $k$-dimensional.
Then it is clear that $K_\pi^\perp = \{(x, 0) \in \R^d \times \R^k : x \in \R^d \}$.
For $K_\pi$ we have
\begin{align*}
  \sum_{j=1}^m \frac{\dim(L_j K_\pi) }{p_j} + \frac{\dim(\pi_{\R^{d}}K_\pi)}{p'}
  = \sum_{j=1}^m \frac{k}{p_j} 
  = k 
  =\dim(K_\pi).
\end{align*}
Since $\dim(L_j K)$ equals to $\dim(K)$ for any subspace $K$ of $K_\pi$ with $j=1, \dots, m$, 
we also have 
\begin{align*}
  \sum_{j=1}^m \frac{\dim(L_j K) }{p_j} + \frac{\dim(\pi_{\R^{d}}K)}{p'}
  = \sum_{j=1}^m \frac{\dim(K)}{p_j} 
  =\dim(K).
\end{align*}
Thus $K_\pi$ is a critical subspace and $(\mathbf{L}_{K_\pi}, \mathbf{p})$ is a Brascamp-Lieb datum. 

On the other hand, for $K_\pi^\perp = \{(x, 0) \in \R^d \times \R^k : x\in \R^d\}$ we consider $(\mathbf{L}_{K_\pi^\perp}, \mathbf{p})$
\begin{align*}
  \mathbf{L}_{K_\pi^\perp} &= (K_\pi^\perp, \{\R^d / (L_j K_\pi)\}_{1\leq j\leq m+1}, \{L_{j, {K_\pi^\perp}}\}_{1\leq j \leq m+1}),\\
  \mathbf{p} & = \Big(\frac{1}{p_1}, \dots, \frac{1}{p_m}, \frac{1}{p'}\Big).
\end{align*}
Note that $\pi_{\R^d, K_\pi^\perp} = \pi_{\R^d}$.
Then we have
\begin{align*}
  &\sum_{j=1}^m \frac{\dim(L_{j,K_\pi^\perp} K_\pi^\perp) }{p_j} + \frac{\dim(\pi_{\R^{d}}K_\pi^\perp)}{p'}\\
  &= \sum_{j=1}^m \frac{d-k}{p_j} + \frac{d}{p'}\\
  &= d-k + \frac{d k }{d} =d
  =\dim(K_\pi^\perp).
\end{align*}
It remains to verify \eqref{condi-transv} for any proper subspace of $K_\pi^\perp$. In order to show this, we consider a subspace $K$ of $K_\pi^\perp$ whose dimension $d_K$ satisfies $d> d_K \geq k$ or $k> d_K\geq1$.
Note that it is important to check dimension of $L_j K / L_j K_\pi$, but it suffices to consider $K$ instead of $L_j K$ because every element of $K$ is given by $(x,0)\in \R^d\times \R^k$ and $L_j (x,0) = x$.

\subsubsection{$d>d_K > k$}
Let $K$ be a $d_K$ dimensional subspace of $K_\pi^\perp$ and observe that
\begin{align*}
	L_\mu (0,y') &= \Theta_\mu^1 y', \quad\forall (0, y') \in K_{\pi}.
\end{align*}
Then we define $K_j := L_j K_\pi = \{(\Theta_j^1 y',0)  \in \R^d \times \R^k : y' \in \R^k \}$ is a $k$ dimensional subspace of $K_\pi^\perp$ due to full rank of $\Theta_j^1$.
It is possible that for some $\mu=1, \dots, m$, $L_\mu K \cap K_\mu = K\cap K_\mu$ is $k$ dimensional.
Thus in general, we have $\dim(L_{\mu,K_\pi^\perp} K) = \dim(K/K_\mu) \geq d_K - k$.
Note that our choice of $\{\Theta_j\}$ satisfying \eqref{ineq-240115 1454} allows us to have
\begin{align*}
	\dim\Big(\text{span}\big\{ K_\mu,  K_{j_1}, \dots, K_{j_\ell}\big\}\Big) \geq k +\ell,\quad j_i \not= \mu.
\end{align*}
That is, there are at most $\ell = d_K -k$ $j$'s such that $K_j$ is a subspace of $K$.
Therefore we have $\dim(K/K_j) \geq d_K - k$ for those at most $\ell +1= d_K -k+1$ $j$'s.
Otherwise, for the rest $m-\ell-1$ $j$'s we have $\dim(K/K_j)\geq d_K -k +1$.
Hence it follows that
\begin{align}\label{240118_1606}
\begin{split}
  &\sum_{j=1}^m \frac{\dim(L_{j,K_\pi^\perp} K) }{p_j} + \frac{\dim(\pi_{\R^{d}}K)}{p'}\\
  \geq &\frac{(d_K -k)(\ell+1)}{m} + \frac{(d_K - k+1)(m-\ell-1)}{m}+ \frac{k}{d}d_K\\
  = &(d_K -k)+ \frac{m-\ell-1}{m}+ \frac{k}{d}d_K\\
  = &d_K -k+ \frac{m-d_K +k-1}{m}+ \frac{k}{d}d_K.
 \end{split}
\end{align}
Here we choose $p_j = m$ for all $j=1, \dots, m$ in order to minimize the loss, $i.e.$ to maximize the lower bound of \eqref{240118_1606}.
Thus we fix $\mathbf{p}$ as
$$
 \Big(\frac{1}{p_1}, \dots, \frac{1}{p_m}, \frac{1}{p'}\Big) = \Big(\frac{1}{m}, \dots, \frac{1}{m}, \frac{k}{d}\Big).
$$
Note that the last line of \eqref{240118_1606} is greater than or equal to $d_K$ whenever 
\begin{align}\label{ineq-230106 1345}
 -k+ \frac{m-d_K +k-1}{m}+ \frac{k}{d}d_K \geq 0.
\end{align}
From $d>d_K>k$, it follows that the left-hand side of \eqref{ineq-230106 1345} is larger than
\begin{align*}
	-k + \frac{m-d+1+k-1}{m} + \frac{k(k+1)}{d}.
\end{align*}
Thus \eqref{ineq-230106 1345} holds whenever 
\begin{align*}
	\frac{m-d+k}{m} \geq \frac{d-k-1}{d}k.
\end{align*}

\subsubsection{$k \geq d_K\geq1$}

\begin{itemize}

\item $d_K = k$.

Let $K$ be not equal to any $K_j$ for $j=1, \dots, m$ so that $\dim(K \cap K_j) \leq k-1$.
Thus we have $\dim(L_{j, K_\pi^\perp}K) = \dim(K/K_j)\geq 1$ for $j=1, \dots, m$, so it follows that
\begin{align*}
  \sum_{j=1}^m \frac{\dim(L_{j,K_\pi^\perp} K) }{p_j} + \frac{\dim(\pi_{\R^{d}}K)}{p'}
  \geq &\sum_{j=1}^m\frac{1}{m} + \frac{k^2}{d}\\
  = &1+\frac{k^2}{d}\\
  = &k +1-  \frac{d-k}{d}k.
\end{align*}
The last line is greater than or equal to $\dim(K)=k$ if
\begin{align}\label{k-0_1}
	1 \geq  \frac{d-k}{d}k.
\end{align}

On the other hand, let $K=K_\mu$ for some $\mu=1, \dots, m$. 
By \eqref{ineq-230106 1631}, we have $\dim(K\cap K_j)\leq k-1$ for $j\not=\mu$.
Thus it follows that
\begin{align*}
  \sum_{j=1}^m \frac{\dim(L_{j,K_\pi^\perp} K) }{p_j} + \frac{\dim(\pi_{\R^{d}}K)}{p'}
  &\geq \sum_{j\not=\mu} \frac{1}{m} + \frac{k^2}{d}\\
  &=\frac{m-1}{m} + \frac{k^2}{d}\\
  &= k +  \frac{m-1}{m} -  \frac{d-k}{d}k.
\end{align*}
The last line is greater than or equal to $\dim(K)=k$ if
\begin{align}\label{k-0_2}
	\frac{m-1}{m} \geq  \frac{d-k}{d}k.
\end{align}
Note that \eqref{k-0_1} is implied by \eqref{k-0_2}.

\item $d_K=k-1$.

For $d_K = k-1$, we consider a subspace $K$ such that $K$ is not contained in $K_j$ for all $j=1,\dots, m$.
Then for some $\mu\in\{1,\dots, m\}$ the worst case in verifying \eqref{condi-transv} is that $K \cap K_\mu$ is $k-2$ dimensional, 
since $\dim(L_{j,K_\pi^\perp} K)$ gets lower as $\dim(K \cap K_\mu)$ gets larger.
Thus we have $\dim(K/K_\mu)=1$, and this may happen for any $j=1, \dots, m$.
It follows that
\begin{align*}
  \sum_{j=1}^m \frac{\dim(L_{j,K_\pi^\perp} K) }{p_j} + \frac{\dim(\pi_{\R^{d}}K)}{p'}
  \geq &\sum_{j=1}^m\frac{1}{m} + \frac{k}{d}(k-1)\\
  = &1+\frac{k}{d}(k-1)\\
  = &(k-1) +1-  \frac{d-k}{d}(k-1).
\end{align*}
The last line is greater than or equal to $\dim(K)=k-1$ if
\begin{align}\label{k-1_1}
	1 \geq  \frac{d-k}{d}(k-1).
\end{align}

Now, let $K$ is a $k-1$ dimensional subspace of $K_\mu$ for some $\mu\in\{1, \dots, m\}$.
Then the worst case is that $K$ is given by intersection of $K_\mu$ and $K_\nu$ for some $\nu\not=\mu$.
Thus we have $\dim(K/K_\mu) = \dim(K/K_\nu)=0$.
However, if we choose any other $j\not= \mu, \nu$, then we have $\dim(K/K_j)\geq1$ because $\dim(K_\mu \cap K_\nu \cap K_j)\leq k-2$ due to \eqref{ineq-230106 1631}.
Without loss of generality, say $\mu=1$ and $\nu=2$ so that by \eqref{ineq-230106 1631} one can check
\begin{align*}
  \sum_{j=1}^m \frac{\dim(L_{j,K_\pi^\perp} K) }{p_j} + \frac{\dim(\pi_{\R^{d}}K)}{p'}
  &\geq \sum_{j=3}^{m} \frac{1}{m} + \frac{k}{d}(k-1)\\
  &=\frac{m-2}{m} + \frac{k}{d}(k-1)\\
  &= (k-1) +  \frac{m-2}{m} -  \frac{d-k}{d}(k-1).
\end{align*}
The last line is greater than or equal to $k-1$ whenever
\begin{align}\label{k-1_2}
	\frac{m-2}{m} \geq \frac{d-k}{d}(k-1).
\end{align}
Note that \eqref{k-1_2} implies \eqref{k-1_1}.

\item $d_K = k-n$.

Similar to $k, k-1$ dimensional cases of $K$, for an arbitrary $k-n$ dimensional subspace $K$, one can check that the worst case happens when $K$ is contained in $K_{j_i}$ for $j_1, \dots, j_n$.
Thus we have
\begin{align*}
  \sum_{j=1}^m \frac{\dim(L_{j,K_\pi^\perp} K) }{p_j} + \frac{\dim(\pi_{\R^{d}}K)}{p'}
  &\geq \sum_{j\not=\mu, j_1, \dots, j_n} \frac{1}{m} + \frac{k}{d}(k-n)\\
  &= \frac{(m-n-1)}{m} +  \frac{k}{d}(k-n)\\
  &= (k-n) + \frac{(m-n-1)}{m} -  \frac{d-k}{d}(k-n).
\end{align*}
Then the last line is greater than or equal to $k-n$ whenever
\begin{align}\label{ineq-230106 1551}
  \frac{(m-n-1)}{m} \geq  \frac{d-k}{d}(k-n),
\end{align}
which leads to $m\geq d$ when $k=d-1$ or $k=n+1$.
\end{itemize}

Together with \eqref{ineq-230106 1345}, one can conclude that $\mathrm{BL}(\mathbf{L}, \mathbf{p})$ is finite for given data $(\mathbf{L}, \mathbf{p})$ whenever $\frac{m-n-1}{m}\geq \frac{d-k}{d}(k-n)$ for all $0\leq n \leq k-1$.
Note that we can rewrite the condition as
\begin{align}\label{ineq-230106 2006}
  \frac{m-1}{m} \geq \frac{d-k}{d}k - \Big(\frac{d-k}{d} - \frac{1}{m}\Big)n,\quad 0\leq n \leq k-1.
\end{align}
Note that \eqref{ineq-230106 2006} is reduced to 
\[
	\frac{m-1}{m} \geq \frac{d-k}{d}k,
\]
for all $0\leq n\leq k-1$ when $m\geq d$. 
Thus, $(\mathbf{L}, \mathbf{p})$ is a Brascamp-Lieb datum. Hence, by Proposition \ref{prop-improve esti} we prove Theorem~\ref{thm-lp improving}.

\section{Proof of Theroem~\ref{thm-lac}}\label{sec_thm_lac}

Recall that the lacunary maximal function $\mathcal{M}^{\Theta}_{\mathcal{S}}$ is defined by 
\begin{align}
  \mathcal{M}^{\Theta}_{\mathcal{S}}(\rF)(x)
  =\sup_{\ell\in\mathbb{Z}} \Big|\int_{\mathcal{S}} \prod_{j=1}^m f_{j}(x-2^{-\ell}\Theta_j y) ~~\mathrm{d}\sigma(y)\Big|,
\end{align}
where $\mathcal{S}$ has $\kappa$-nonvanishing principal curvatures and $\Theta = \{\Theta_j\}$ is a family of mutually linearly independent rotation matrices.

Observe that for any fixed $\ell\in\mathbb{Z}$, we can write the identity operator $I$ as follows 
 \begin{align}
  I=P_{<\ell}+\sum^{\infty}_{n=0}P_{\ell+n} = P_{<\ell} + P_{\ell\leq }.\label{proj_identity}
 \end{align}
 Then we have
 \begin{equation}\label{proj_mlinear_id}
 \begin{aligned}
  \prod_{j=1}^m f_j 
  &= \prod_{j=1}^m \Big(P_{<\ell}f_j + P_{\ell\leq}f_j \Big)\\
  &= \Bigl(\prod_{j=1}^m P_{<\ell}f_j \Bigr) 
  +\Bigl( \prod_{j=1}^m P_{\ell\leq}f_j\Bigr)\\
  &\quad + \sum_{\alpha= 1}^{m-1} \frac{1}{\alpha! (m-\alpha)!} \sum_{\tau \in S_m}\Bigl(\prod_{i=1}^\alpha P_{\ell\leq}f_{\tau(i)} \Bigr) \Bigl( \prod_{i=\alpha+1}^m P_{<\ell}f_{\tau(i)}\Bigr),
   \end{aligned}
\end{equation}
where the second summation runs over the symmetric group $S_m$ over $\{1, \dots, m\}$.
For ${\bf{n}}=(n_1,\cdots,n_m) \in \mathbb{N}_0^m = (\mathbb{N} \cup \{0\})^m$, we define 
\begin{align}
	\mathcal{A}_{\ell}^{\alpha,\tau}(\rF)(x)
    &:=\int_{\mathcal{S}} \Bigl(\prod_{i=1}^\alpha P_{<\ell}f_{\tau(i)}(x-2^{-\ell}\Theta_{\tau(i)} y) \Bigr) \Bigl( \prod_{i=\alpha+1}^m f_{\tau(i)} (x - 2^{-\ell}\Theta_{\tau(i)} y)\Bigr)~~\mathrm{d}\sigma(y),\label{A_ell_F}\\
   \tilde{\mathcal{A}}_{\ell}^{\alpha,\tau}(\rF)(x)
    &:=\int_{\mathcal{S}} \Bigl(\prod_{i=1}^\alpha f_{\tau(i)}(x-2^{-\ell}\Theta_{\tau(i)} y) \Bigr) \Bigl( \prod_{i=\alpha+1}^m P_{<\ell} f_{\tau(i)} (x - 2^{-\ell}\Theta_{\tau(i)} y)\Bigr)~~\mathrm{d}\sigma(y)\\
	\mathcal{M}_{\bf{n}}(\rF)
    &:=\sup_{\ell\in\mathbb{Z}} \Big|\int_{\mathcal{S}} \prod_{j=1}^m P_{\ell+n_j} f_j(x - 2^{-\ell}\Theta_j y)~~\mathrm{d}
    \sigma(y) \Big|,\label{M_n_F}\\
	\mathcal{S}_{\bf{n}}(\rF)
    &:=\sum_{\ell\in\mathbb{Z}}\Big|\int_{\mathcal{S}} \prod_{j=1}^m P_{\ell+n_j} f_j(x - 2^{-\ell}\Theta_j y)~~\mathrm{d}
    \sigma(y) \Big|.\label{S_n_F}
\end{align}
Note that $\mathcal{M}_{\bf{n}}(\rF)$ corresponds to $\alpha=0$ case in \eqref{proj_mlinear_id}.
Therefore, the lacunary maximal function $\mathcal{M}^{\Theta}_{\mathcal{S}}$ can be controlled by a constant mutiple of 
\begin{align}
  \sum_{\alpha=1}^m \sum_{\tau\in S_m}\sup_{\ell\in\Z} \Big(|\mathcal{A}_\ell^{\alpha,\tau}(\rF)| +|\tilde{\mathcal{A}}_\ell^{\alpha,\tau}(\rF)| \Big)+ \sum_{\mathbf{n}\in \mathbb{N}_0^m} \mathcal{M}_{\mathbf{n}}(\rF).
\end{align}
By similarity of $\mathcal{A}_\ell^{\alpha,\tau}(\rF)$ and $\tilde{\mathcal{A}}_\ell^{\alpha,\tau}(\rF)$ together with the symmetry on $\tau\in S_m$, instead of the first summation it suffices to consider estimates for $\mathcal{A}_\ell^\alpha(\rF)$, which is given by
\begin{align}
	\mathcal{A}_{\ell}^{\alpha}(\rF)(x)
    &:=\int_{\mathcal{S}} \Bigl(\prod_{j=1}^\alpha P_{<\ell}f_{j}(x-2^{-\ell}\Theta_j y) \Bigr) \Bigl( \prod_{j=\alpha+1}^m f_{j} (x - 2^{-\ell}\Theta_j y)\Bigr)~~\mathrm{d}\sigma(y)
\end{align}
Then the proof will be completed by combination of the following lemmas and induction on $m$-linearity:
\begin{lemma}\label{lem-m2}
  For $m=2$ and $\alpha=1$ we have
  $$
    \mathcal{A}_\ell^\alpha(\rF)(x) \leq M_{HL}(f_1)(x) \times M^{\Theta_2}_\mathcal{S}(f_2)(x),
  $$
  where $\rF = (f_1,f_2)$ and 
  \begin{align*}
  M^{\Theta_2}_\mathcal{S}(f_2)(x) = &\sup_{\ell\in\Z}|\int_{\mathcal{S}} f_2(x-2^{-\ell}\Theta_2y)~~\mathrm{d}\sigma(y)|\\
  =&\sup_{\ell\in\Z}|\int_{\mathcal{S}} f_2\Big(\Theta_2(\Theta^{-1}_2x-2^{-\ell}y)\Big)~~\mathrm{d}\sigma(y)|.
  \end{align*}
\end{lemma}
\begin{proof}
For $m=2$ we have 
\begin{align*}
	\mathcal{A}_\ell^\alpha(\rF)(x)  = \Big| \int_{\mathcal{S}} P_{<\ell}f_1(x-2^{-\ell}\Theta_1y) \times f_2(x-2^{-\ell}\Theta_2y) ~\mathrm{d}\sigma(y)\Big|
\end{align*}
  It suffices to show $\sup_{y\in\mathcal{S}}|P_{<\ell}f(x-2^{-\ell}y)| \lesssim M_{HL}(f)(x)$, where $M_{HL}$ denotes the Hardy-Littlewood maximal function. Since $\phi_\ell(x) = 2^{\ell d}\phi(2^\ell x)$, we have
  \begin{align*}
    P_{<\ell}f(x-2^{-\ell}y)
    &= \int_{\R^d} f(z) 2^{\ell d}\phi(2^\ell(x - 2^{-\ell}y -z))~~\mathrm{d}z\\
    &= \int_{\R^d} f(x+ 2^{-\ell}z) \phi(y-z)~~\mathrm{d}z.
  \end{align*}
  Since $y$ is contained in a compact surface $\mathcal{S}$, we have for any $N>0$
  $$
    |P_{<\ell}f(x-2^{-\ell}y)| \lesssim \int_{\R^d} |f(x + 2^{-\ell}z)| \frac{C_N}{(1+|z|)^N}~~\mathrm{d}z \leq M_{HL}(f)(x).
  $$
\end{proof}
Since $M_{HL}, M^{\Theta_2}_\mathcal{S}$ are bounded on $L^p$ for $p\in(1, \infty]$, we need to handle the summation of $M_\mathbf{n}$ over $\mathbf{n} \in \mathbb{N}_0^m$.
Note that for $\alpha=2$ we have $\mathcal{A}_\ell^\alpha(\rF)(x) \lesssim M_{HL}(f_1)(x) \times M_{HL}(f_2)(x)$.

\begin{lemma}\label{lem-qba esti}
  Let $\mathbf{n} \in \mathbb{N}^m$ and $\frac{1}{p} = \frac{2(\kappa+1)}{\kappa+2}$.
  Then for $(\frac1{p_1},\cdots,\frac1{p_m}) \in \text{conv}(\mathcal{V}_{\kappa})$,  we have
  $$
    \| \mathcal{M}_{\mathbf{n}}(\rF) \|_{L^{p, \infty}} \leq C(1+|\mathbf{n}|^{m}) \prod_{j=1}^m \|f_j\|_{L^{p_j}}.
  $$
  In particular, we have $\frac{1}{p}=\frac{2d}{d+1}$ when we consider averages over $\mathcal{S} = \mathbb{S}^{d-1}$.
\end{lemma}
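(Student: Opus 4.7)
\smallskip

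My plan is to combine a dyadic linearization of the supremum with the quasi-Banach estimate of Proposition~\ref{prop-qbanach esti} and Littlewood-Paley theory, while accepting a polynomial loss in $\mathbf n$ along the way.

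Write $T_\ell(\rF)(x):=\int_{\mathcal S}\prod_j P_{\ell+n_j}f_j(x-2^{-\ell}\Theta_jy)\,\mathrm d\sigma(y)$, so that $\mathcal{M}_{\mathbf n}\rF(x)=\sup_{\ell\in\Z}|T_\ell\rF(x)|$.  The substitution $x\mapsto 2^{-\ell}x$ converts $T_\ell$ into the undilated multilinear average $\mathcal A_{\mathcal S}^\Theta$ applied to the rescaled inputs $(P_{\ell+n_j}f_j)(2^{-\ell}\cdot)$, so Proposition~\ref{prop-qbanach esti} combined with how the $L^p$ and $L^{p_j}$ norms transform under this rescaling yields the scale-invariant estimate
$$
\|T_\ell\rF\|_{L^p(\R^d)}\lesssim\prod_{j=1}^m\|P_{\ell+n_j}f_j\|_{L^{p_j}(\R^d)},
$$
with an implicit constant independent of $\ell$ and $\mathbf n$.

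Since $p<1$, Chebyshev's inequality together with the countable subadditivity of Lebesgue measure gives the weak-type sub-additivity
$$
\|\mathcal{M}_{\mathbf n}\rF\|_{L^{p,\infty}}^p=\sup_{\lambda>0}\lambda^p\,\bigl|\{\mathcal{M}_{\mathbf n}\rF>\lambda\}\bigr|\le\sum_{\ell\in\Z}\|T_\ell\rF\|_{L^p}^p\le\sum_\ell\prod_j\|P_{\ell+n_j}f_j\|_{L^{p_j}}^p.
$$
I would then apply H\"older's inequality in the $\ell$-sum with exponents $p_j/p$ (whose reciprocals sum to $1$ thanks to the relation $\sum 1/p_j=1/p$), treating each $p_j=\infty$ factor by the bound $\sup_\ell\|P_{\ell+n_j}f_j\|_\infty\le\|f_j\|_\infty$.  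This factors the remaining $\ell$-sum into
$$
\prod_{j:\,p_j<\infty}\Big(\sum_{\ell\in\Z}\|P_\ell f_j\|_{L^{p_j}}^{p_j}\Big)^{p/p_j}\cdot\prod_{j:\,p_j=\infty}\|f_j\|_\infty^p.
$$

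The principal obstacle is the estimation of each inner sum $\sum_\ell\|P_\ell f_j\|_{L^{p_j}}^{p_j}$.  For $p_j\ge 2$, a combination of Minkowski's inequality with the Littlewood-Paley square-function bound gives $\sum_\ell\|P_\ell f\|_{L^{p_j}}^{p_j}\lesssim\|f\|_{L^{p_j}}^{p_j}$ with no loss.  In the sub-$L^2$ range $p_j\in((k+2)/(k+1),2)$—which occurs at the extreme points of $\mathrm{conv}(\mathcal V_k)$, where two of the $p_j$ equal $(k+2)/(k+1)<2$—this naive summability genuinely fails (the Triebel-Lizorkin space $F^0_{p_j,p_j}$ sits strictly inside $L^{p_j}$), and one must instead interpolate between the $\ell^2$-summability coming from Littlewood-Paley orthogonality and either a pointwise Hardy-Littlewood bound or a Bernstein/Sobolev-type estimate on the frequency-localized pieces, paying a factor of $(1+n_j)$ per bad index.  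Multiplied across the at most $m$ such factors this produces $\prod_j(1+n_j)\lesssim 1+|\mathbf n|^m$, as claimed.  The delicate point is to arrange the interpolation so that the loss remains \emph{linear} in each $n_j$; any super-linear loss would later spoil the summation over $\mathbf n$ when combined with the exponential smoothing estimate used in the proof of Theorem~\ref{thm-lac}.
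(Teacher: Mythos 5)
Your proposed approach has a genuine and, I believe, unrecoverable gap, and it is instructive to see why.

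After the rescaling and the subadditivity step you arrive at
$$
\|\mathcal M_{\mathbf n}\rF\|_{L^{p,\infty}}^p\ \le\ \sum_{\ell\in\Z}\prod_{j=1}^m\|P_{\ell+n_j}f_j\|_{L^{p_j}}^p,
$$
and then you apply H\"older in $\ell$ with exponents $p_j/p$ to factor this into $\prod_j\bigl(\sum_\ell\|P_{\ell+n_j}f_j\|_{p_j}^{p_j}\bigr)^{p/p_j}$.  The first problem is that after H\"older each factor is $\sum_\ell\|P_{\ell+n_j}f_j\|_{p_j}^{p_j}=\sum_{\ell'}\|P_{\ell'}f_j\|_{p_j}^{p_j}$ by re-indexing $\ell'=\ell+n_j$, so these quantities are \emph{independent of} $\mathbf n$.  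There is therefore no mechanism in your scheme by which a factor polynomial in $|\mathbf n|$ can appear; the "pay $(1+n_j)$ per bad index" interpolation you allude to has nothing on which to act.  The second problem is that, independent of $\mathbf n$, the quantity $\sum_\ell\|P_\ell f\|_{p_j}^{p_j}$ genuinely fails to be controlled by $\|f\|_{p_j}^{p_j}$ when $p_j<2$ (this is the wrong-way embedding $L^{p_j}\not\hookrightarrow F^0_{p_j,p_j}$), and at the extreme points of $\mathrm{conv}(\mathcal V_k)$ two of the $p_j$ equal $\frac{k+2}{k+1}<2$.  Notice also that your chain of inequalities would, if it closed, prove a \emph{strong}-type $L^p$ bound rather than the weak-type one asserted — an overshoot that is itself a warning sign.

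The paper's actual proof takes a completely different route precisely to avoid this wall.  It fixes a level $\lambda$, performs a Calder\'on--Zygmund decomposition of each $f_j$ at height $C\lambda^{p/p_j}$ into $g_j+b_j$, discards the union of the dilated bad cubes (of measure $\lesssim\lambda^{-p}$), and then estimates the level set term by term.  The contribution of the good parts is handled trivially by the $L^\infty$ normalization; the contribution involving bad parts exploits the mean-zero property and the spatial separation from the bad cubes through the auxiliary decay estimates (Lemma~\ref{lem-decay for b}) and the scale-matching sum (Lemma~\ref{lem-sum}).  The $|\mathbf n|^m$ loss emerges there, from the $\ell$-summation interacting with the dyadic scales $i_1,\dots,i_m$ of the bad cubes — a frequency/space mismatch that has no analogue in a pure Littlewood--Paley decomposition.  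In short, the weak-type target and the CZ decomposition are not cosmetic; they are the content of the lemma.
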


\begin{lemma}\label{lem-smoothing}
  Let $\mathbf{n} \in \mathbb{N}^m$ and $1= \sum_{j=1}^m \frac{1}{r_j}$ for some $r_1, \dots, r_m \in (1, \infty)$. Then we have
  $$
    \|\mathcal{S}_{\mathbf{n}}(\rF)\|_{L^1} \lesssim 2^{-\delta|\mathbf{n}|} \prod_{j=1}^m \|f_j\|_{L^{r_j}}.
  $$
\end{lemma}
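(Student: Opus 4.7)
The plan is to extract an exponential factor $2^{-\delta|\mathbf{n}|}$ from the Fourier decay of $d\sigma$ and then to assemble the $\ell$-sum through the frequency localization of the pieces. Write
\[
A_\ell^{\mathbf{n}}(\rF)(x):=\int_{\mathcal{S}}\prod_{j=1}^m P_{\ell+n_j}f_j(x-2^{-\ell}\Theta_j y)\,d\sigma(y),
\]
so that $\cS_\mathbf{n}(\rF)(x)=\sum_{\ell\in\Z}|A_\ell^\mathbf{n}(\rF)(x)|$. A Fourier inversion gives
\[
A_\ell^{\mathbf{n}}(\rF)(x)=\int e^{2\pi i x\cdot(\xi_1+\cdots+\xi_m)}\,\widehat{d\sigma}\!\Bigl(2^{-\ell}\sum_{j=1}^m \Theta_j^T\xi_j\Bigr)\prod_{j=1}^m \widehat{P_{\ell+n_j}f_j}(\xi_j)\,d\xi_1\cdots d\xi_m,
\]
with $|\xi_j|\sim 2^{\ell+n_j}$. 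Choose $j_0$ so that $n_{j_0}=\max_j n_j$. The pairwise linear independence of $\{\Theta_j\}$ guarantees that the dominant term $\Theta_{j_0}^T\xi_{j_0}$ of magnitude $\sim 2^{\ell+n_{j_0}}$ cannot be cancelled on a set of positive measure in the product of annuli, so one obtains the uniform pointwise estimate
\[
\Bigl|\widehat{d\sigma}\!\Bigl(2^{-\ell}\sum_{j}\Theta_j^T\xi_j\Bigr)\Bigr|\lesssim 2^{-\frac{k}{2}n_{j_0}}\lesssim 2^{-\delta_0|\mathbf{n}|}
\]
for some $\delta_0=\delta_0(k,m)>0$, uniformly in $\ell$.

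Inserting this symbol bound into a Parseval computation, parallel to the derivation of \eqref{ineq_230228_1309}, yields an $L^2$ smoothing estimate
\[
\|A_\ell^{\mathbf{n}}(\rF)\|_{L^2(\R^d)}\lesssim 2^{-\delta_0|\mathbf{n}|}\prod_{j=1}^m \|P_{\ell+n_j}f_j\|_{L^{p_j}(\R^d)},\qquad \tfrac12=\sum_{j=1}^m \tfrac{1}{p_j},
\]
that mirrors Proposition~\ref{prop_qbesti_cM}. I then interpolate this with the trivial H\"older bound $\|A_\ell^{\mathbf{n}}(\rF)\|_{L^1}\lesssim \prod_j\|P_{\ell+n_j}f_j\|_{L^{r_j}}$ (valid for any $\sum 1/r_j=1$) to descend to the $L^1$ scale while retaining an exponential rate $2^{-\delta|\mathbf{n}|}$ with a smaller but strictly positive $\delta$.

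To pass from the individual estimate to $\cS_\mathbf{n}(\rF)$, use the sign trick $\sum_\ell|A_\ell^\mathbf{n}|=\sum_\ell \epsilon_\ell A_\ell^\mathbf{n}$ with $\epsilon_\ell(x)\in\{-1,0,1\}$, reducing the problem to $\sum_\ell \|A_\ell^\mathbf{n}(\rF)\|_{L^1}\lesssim 2^{-\delta|\mathbf{n}|}\sum_\ell\prod_j\|P_{\ell+n_j}f_j\|_{L^{r_j}}$. The remaining $\ell$-sum is handled by H\"older in $\ell$ (using $\sum 1/r_j=1$) to obtain $\prod_j\bigl(\sum_\ell \|P_{\ell+n_j}f_j\|_{L^{r_j}}^{r_j}\bigr)^{1/r_j}$, which is then absorbed into $\prod_j\|f_j\|_{L^{r_j}}$ via standard Littlewood--Paley square function estimates (or, equivalently, by noting that the frequency supports of the pieces are finitely overlapping in $\ell$ after the shift $\ell\mapsto \ell+n_j$). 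The main obstacle is the tied-maximum regime where several $n_j$ equal $n_{j_0}$: there the lower bound $|\sum_j\Theta_j^T\xi_j|\gtrsim 2^{\ell+n_{j_0}}$ is not automatic, and one must exploit the pairwise linear independence of the rotations together with a pigeonhole in the relative orientations of the tied $\xi_j$'s to show that the near-cancellation set is negligible. A secondary subtlety is that the interpolation from $L^2$ smoothing to $L^1$ costs a fraction of the rate, which is why the final $\delta$ is smaller than the sharp exponent $k/2$ suggested by the Fourier decay.
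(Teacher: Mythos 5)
Your architecture (exponential decay in $|\mathbf{n}|$ from a smoothing estimate, interpolate down to $L^1$, sum in $\ell$ via H\"older and Littlewood--Paley) matches the paper's; the final $\ell$-sum step is essentially identical. However, two of the intermediate steps have genuine gaps.

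\textbf{Gap 1: the pointwise symbol bound is false.} You claim $|\widehat{d\sigma}(2^{-\ell}\sum_j\Theta_j^T\xi_j)|\lesssim 2^{-\delta_0|\mathbf{n}|}$ uniformly on the product of annuli $|\xi_j|\sim 2^{\ell+n_j}$. This fails whenever two or more of the $n_j$ are comparable. Take $m=2$ and $n_1=n_2$: since $\Theta_1,\Theta_2$ are rotations, for every $\xi_1$ with $|\xi_1|\sim 2^{\ell+n_1}$ the choice $\xi_2 = -(\Theta_2^T)^{-1}\Theta_1^T\xi_1$ lies in the second annulus and makes $\Theta_1^T\xi_1+\Theta_2^T\xi_2 = 0$, so $\widehat{d\sigma}$ is evaluated near the origin and has no decay at all. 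This is a $d$-dimensional family inside $\R^{2d}$, and any neighborhood of it has nonzero measure in the annulus product, so you cannot discard it by a pigeonhole in orientations. Pairwise linear independence of the $\Theta_j$ does not prevent this cancellation. The paper sidesteps it entirely by exploiting positivity of $\mathcal{A}_\cS^\Theta$: one bounds $|\mathcal{A}_\cS^\Theta(\rF)(x)|\leq \int_\cS |f_{j_0}(x+\Theta_{j_0}y)|\,d\sigma(y)\prod_{l\neq j_0}\|f_l\|_\infty$, applies Plancherel to the \emph{single-factor} convolution (where the Fourier decay of $d\sigma$ is unambiguous), and then multilinearly interpolates over the choice of $j_0$ to land in the Besov scale. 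This is how the paper produces the factor $2^{-k|\mathbf{n}|/2}$ without ever bounding the full multilinear symbol pointwise.

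\textbf{Gap 2: the interpolation anchor must be quasi-Banach.} You propose interpolating the $L^2$-smoothing estimate (with $\sum_j 1/p_j = 1/2$) against the trivial $L^{r_1}\times\cdots\times L^{r_m}\to L^1$ H\"older bound (with $\sum_j 1/r_j = 1$). But the target of the lemma is an $L^1$ estimate with $\sum 1/r_j = 1$, which is exactly the $\theta=0$ endpoint of that interpolation segment; the decay factor $2^{-\delta_0|\mathbf{n}|}$ carries weight $\theta$ and therefore evaporates. You cannot "descend to $L^1$ while retaining an exponential rate" by interpolating between $L^2$ and $L^1$: the decay survives only at points strictly between the two endpoints, which all have $\sum 1/r_j < 1$. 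The paper instead interpolates the $L^2$-smoothing estimate against the quasi-Banach estimate of Proposition~\ref{prop-qbanach esti} (output $L^p$ with $p = \frac{k+2}{2(k+1)} < 1$ and $\sum 1/q_j = \frac{2(k+1)}{k+2} > 1$). Because that anchor lies \emph{past} $L^1$, the $L^1$ target sits at a strictly positive interpolation parameter ($\theta = 2k/(3k+2)$), so a positive $\delta$ survives. This is precisely why the quasi-Banach estimates are established first — they are not a luxury but the essential second endpoint.

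Finally, note that the paper works with the unrescaled operator $\mathcal{A}_\cS^\Theta(P_{n_1}f_1,\dots,P_{n_m}f_m)$ and transfers to $A_\ell^{\mathbf{n}}$ by the exact dilation identity $\| \int_\cS \prod_j P_{\ell+n_j}f_j(x-2^{-\ell}\Theta_j y)\,d\sigma\|_{L^1} = 2^{-\ell d}\|\mathcal{A}_\cS^\Theta(P_{n_1}f_{1,-\ell},\dots)\|_{L^1}$; this scaling identity, combined with $\sum 1/r_j = 1$, is what makes the $\ell$-sum scale-invariant before applying H\"older and the square-function bound.
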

Proofs of Lemmas~\ref{lem-qba esti} and \ref{lem-smoothing} will be given in Section~\ref{sec-qba n smthng} and note that Lemma~\ref{lem-smoothing} is an easy consequence of the assumption \eqref{ineq_reg_cM'}.
Since $\mathcal{M}_\mathbf{n} \leq \mathcal{S}_\mathbf{n}$ by definition, it follows from interpolation between Lemmas~\ref{lem-qba esti} and \ref{lem-smoothing} that
\begin{align}
  \| \mathcal{M}_\mathbf{n}(\rF) \|_{L^p} \lesssim 2^{-\delta'|\mathbf{n}|} \prod_{j=1}^m\|f_j\|_{L^{p_j}},\quad \delta'>0,
\end{align}
whenever $\frac{1}{p} <\frac{2(\kappa+1)}{\kappa+2}$ and $(1/p_1, \dots, 1/p_m)$ is in an interior of convexhull of $\textrm{conv}(\mathcal{V}_\kappa)$ and $(1/r_1, \dots, 1/r_m)$.
Since $2^{-\delta'|\mathbf{n}|}$ is summable over $\mathbf{n}\in\mathbb{N}_0^{m}$, this proves the theorem for $m=2$ case inside of the convexhull.
Then together with interpolation with trivial $L^\infty\times\cdots\times L^\infty \to L^\infty$ estimates, we prove the theorem for $m=2$.

For the induction, we assume that Theorem~\ref{thm-lac} holds for $N$-linear operators with $N=2,\cdots,m-1$. 
Note that we already showed that $m=2$-case holds.
By the assumption, we have the following lemma:
\begin{lemma}\label{lem-induction}
  For $\alpha=1, \dots, m$, we have
  $$
    \mathcal{A}_\ell^\alpha(\rF)(x) 
    \lesssim 
    \prod_{\mu=1}^\alpha M_{HL}(f_\mu)(x) \times 
    \sup_{\ell\in\Z}\int_{\mathcal{S}}\Big| \prod_{\nu=\alpha+1}^m f_\nu(x - 2^{-\ell}\Theta_\nu y)\Big|~~\mathrm{d}\sigma(y).
  $$
  Moreover, if we assume that Theorem~\ref{thm-lac} holds for $N$-linear operators with $N=2, 3, \dots, m-1$, then it follows that
  $$
  \sup_{\ell\in\Z}\int_{\mathcal{S}} \Big| \prod_{\nu=\alpha+1}^m f_\nu(x - 2^{-\ell}\Theta_\nu y)\Big|~~\mathrm{d}\sigma(y)
  $$
  satisfies multilinear estimates of Theorem~\ref{thm-lac}.
\end{lemma}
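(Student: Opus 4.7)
The lemma combines two assertions: a pointwise product inequality for $\mathcal{A}_\ell^\alpha(\rF)$ and the claim that the residual supremum obeys Theorem~\ref{thm-lac}. I would handle them separately---the first by directly extending the argument of Lemma~\ref{lem-m2}, the second by invoking the induction hypothesis, under which Theorem~\ref{thm-lac} is assumed for all arities $N = 2, \dots, m-1$.

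For the pointwise bound, I would iterate the reasoning of Lemma~\ref{lem-m2} once for each index $\mu = 1, \dots, \alpha$. Writing the low-frequency projection as a convolution
$$
P_{<\ell} f_\mu(x - 2^{-\ell}\Theta_\mu y) = \int_{\R^d} f_\mu(x + 2^{-\ell} z)\,\phi(\Theta_\mu y - z)~\mathrm{d}z
$$
and using compactness of $\mathcal{S}$ together with the Schwartz decay of $\phi$, each such factor is uniformly controlled in $y \in \mathcal{S}$ and $\ell \in \Z$ by $C\, M_{HL}(f_\mu)(x)$. Because these bounds depend on neither $y$ nor $\ell$, they factor out of the surface integral and out of the supremum in $\ell$, leaving exactly
$$
\sup_{\ell\in\Z} \Big| \int_{\mathcal{S}} \prod_{\nu=\alpha+1}^m f_\nu(x - 2^{-\ell}\Theta_\nu y)~\mathrm{d}\sigma(y) \Big| = \mathcal{M}_{\mathcal{S}}^{\Theta'}(f_{\alpha+1}, \dots, f_m)(x),
$$
where $\Theta' = (\Theta_{\alpha+1}, \dots, \Theta_m)$ inherits linear independence from $\Theta$.

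Since $\alpha \geq 1$ forces the arity $m - \alpha$ to be at most $m-1$, the residual operator $\mathcal{M}_{\mathcal{S}}^{\Theta'}$ falls within the inductive regime, and the induction hypothesis delivers the relevant $L^{p_{\alpha+1}} \times \cdots \times L^{p_m} \to L^q$ bounds of Theorem~\ref{thm-lac}. The main obstacle is not analytic but organizational: to close the main theorem one must match Hölder exponents so that each $M_{HL}(f_\mu)$ is placed in an $L^{p_\mu}$ space with $p_\mu > 1$ (which is legal since the hypothesis $p > \frac{k+2}{2(k+1)}$ keeps all relevant $p_\mu$ strictly above $1$) while the residual tuple $(1/p_{\alpha+1}, \dots, 1/p_m)$ lies in the analogous $\mathrm{conv}(\mathcal{V}_k)$ for arity $m - \alpha$. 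This is feasible because each vertex of $\mathcal{V}_k$ has only two nonzero coordinates, so after a permutation of indices the $\mu$-coordinates that carry $M_{HL}$ can be chosen to correspond to the zero coordinates of a suitable convex combination; the reduction then closes via Hölder's inequality. The boundary case $\alpha = m$ is trivial since the residual integral reduces to a constant, and the resulting pointwise bound $\prod_{\mu=1}^m M_{HL}(f_\mu)(x)$ is controlled in the desired $L^p$ range by the Hardy-Littlewood maximal theorem alone.
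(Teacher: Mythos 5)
Your proof takes essentially the same approach as the paper: the paper's justification is simply the one-line remark that the pointwise bound follows by iterating the argument of Lemma~\ref{lem-m2} and the residual claim follows from the inductive hypothesis, which is exactly what you carry out in detail. The extra discussion you give about Hölder exponent matching and the $\alpha = m$ boundary case concerns how the lemma is subsequently used in the induction for Theorem~\ref{thm-lac} rather than the lemma itself, but it is a helpful and correct observation.
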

\begin{proof}
	The first assertion of the lemma follows directly by the proof of Lemma~\ref{lem-m2}.
	For the second assertion, it is just an $(m-\alpha)$-sublinear average, hence the conclusion follows directly by the assumption.
\end{proof}

We assume that Theorem~\ref{thm-lac} is true for $N$-linear operators with $N=2,\cdots,m-1$ and prove the $N=2$ case.
For general $m$, by Lemma~\ref{lem-induction} we have
\begin{align}\label{230102_1603}
\begin{split}
	&\|\mathcal{A}_\ell^\alpha(\rF)(x) \|_{L^{p}(\R^d)}\\
	\lesssim &\Big\|\prod_{\mu=1}^\alpha M_{HL}(f_\mu)(x) \times \sup_{\ell\in\Z}\int_{\Sigma} \Big|\prod_{\nu=\alpha+1}^m f_\nu(\cdot - 2^{-\ell}y_\nu)\Big|~\mathrm{d}\sigma(y) \Big\|_{L^{p}(\R^d)}\\
	\leq &\Big\|\prod_{\mu=1}^\alpha M_{HL}(f_\mu)\Big\|_{L^{1/\alpha_1}(\R^d)}
	\times \Big\|  \sup_{\ell\in\Z}\int_{\Sigma} \Big|\prod_{\nu=\alpha+1}^m f_\nu(\cdot - 2^{-\ell} y_\nu)\Big|~\mathrm{d}\sigma(y)\Big\|_{L^{1/\alpha_2}(\R^d)}\\
	\lesssim &\prod_{\mu=1}^\alpha \| f_\mu\|_{L^{p_\mu}(\R^d)}
	\times \prod_{\nu=\alpha+1}^m \| f_\nu \|_{L^{p_\nu}(\R^d)},
\end{split}
\end{align}
where $\alpha_1 = 1/p_1 +\cdots+1/p_\alpha$ and $\alpha_2 = 1/p_{\alpha+1} +\cdots+ 1/p_m$.

Since we already proved Lemmas~\ref{lem-qba esti} and \ref{lem-smoothing} for general $m$, together with \eqref{230102_1603} we show that Theorem~\ref{thm-lac} holds for $m$-linear lacunary maximal averages under the assumption that $N$ cases hold for $N=2,\dots,m-1$.
This closes the induction hence proves the theorem.

\section{Proofs of Lemmas~\ref{lem-qba esti} and \ref{lem-smoothing}}\label{sec-qba n smthng}

\subsection{Proof of Lemma~\ref{lem-qba esti}}

For the $L^{p,\infty}$-estimate of Lemma~\ref{lem-qba esti}, we assume $\|f_j\|_{p_j}=1$ and show the following inequality:
\begin{align}\label{ineq-levelset esti}
  \meas\Bigl(\{x : \mathcal{M}_{\mathbf{n}}(\rF)(x) > \lambda\}\Bigr) \lesssim |\mathbf{n}|^{m}\lambda^{-p}.
\end{align}
To obtain \eqref{ineq-levelset esti}, we exploit the approach of Chirst and Zhou \cite{Christ_Zhou2022}, which is based on the multilinear Calder\'on-Zygmund decomposition.
We apply the Calder\'on-Zygmund decomposition at height $C\lambda^{\frac{p}{p_j}}$ to each $f_j$, $j=1, \dots, m$ for some $C>0$ so that for each $j$ we have $f_j = g_j + b_j$ such that
\begin{align}
  &\|g_j\|_{\infty} \leq C \lambda^{\frac{p}{p_j}},\label{CZ-height}\\
  &b_j = \sum_{\gamma} b_{j, \gamma},\quad \supp(b_{j,\gamma})\subset Q_{j, \gamma},\label{CZ-bad}\\
  &\|b_{j, \gamma}\|_{L^{p_j}}^{p_j} \lesssim \lambda^{p}\meas(Q_{j, \gamma}), \quad \sum_{\gamma}\meas(Q_{j, \gamma}) \lesssim \lambda^{-p},\label{CZ-bad esti}\\
  &\int b_{j, \gamma} = 0\label{CZ-cancel}.
\end{align}
Note that $Q_{j,\gamma}$ denotes a dyadic cube.
Then we have
\begin{align*}
  \meas\Bigl(\{x : \mathcal{M}_{\mathbf{n}}(\rF)(x) > \lambda\}\Bigr)
  \lesssim &\meas\Bigl(\{x : \mathcal{M}_{\mathbf{n}}(g_1, \dots, g_m)(x) > 2^{-m}\lambda\}\Bigr)\\
  &+ \meas\Bigl(\{x : \mathcal{M}_{\mathbf{n}}(g_1, \dots, g_{m-1}, b_m)(x) > 2^{-m}\lambda\}\Bigr)\\
  &+\cdots+ \meas\Bigl(\{x : \mathcal{M}_{\mathbf{n}}(b_1, \dots, b_m)(x) > 2^{-m}\lambda\}\Bigr).
\end{align*}
For $C_\mathcal{S} = 5\max(1, \diam(\mathcal{S}))$ we define $\mathcal{E} = \cup_{j=1}^m \cup_{\gamma} C_\mathcal{S}Q_{j, \gamma}$ so that $\meas(\mathcal{E}) \lesssim \lambda^{-p}$.
Note that $C_\mathcal{S} Q$ is a cube whose side-length is $C_\mathcal{S}$ times of that of $Q$ with the same center as $Q$.
Thus we estimate each level set for $x\in \R^d\setminus\mathcal{E}$.

\subsubsection{Estimates for $\mathcal{M}_{\mathbf{n}}(b_1, \dots, b_m)$}\label{sssec-bad}

Let $b_j^i = \sum_{\gamma : s(Q_{j, \gamma}) = 2^{-i}} b_{j, \gamma}$ where $s(Q)$ denotes a side length of $Q$.
Then $| \mathcal{M}_{\mathbf{n}}(b_1, \dots, b_m) |^p$ with $p = \frac{\kappa+2}{2(\kappa+1)}$ is bounded by
\begin{align}
  \sum_{i_1, \dots, i_m \in \Z} \sum_{\ell\in\Z} | \mathcal{A}_{\ell}^\mathbf{n}(b_1^{i_1}, \dots, b_m^{i_m})|^p,
\end{align}
where 
$$
\mathcal{A}_{\ell}^\mathbf{n}(f_1, \dots, f_m)(x) = \int_{\mathcal{S}} \prod_{j=1}^m P_{n_j +\ell} f_j(x - 2^{-\ell}\Theta_j y)~~\mathrm{d}\sigma(y).
$$
To proceed further, we introduce the following lemmas whose proofs will be given in the last part of this subsection:
\begin{lemma}\label{lem-decay for b}
  For $(\frac{1}{p_1}, \dots, \frac{1}{p_m})\in \text{conv}(\mathcal{V}_{\kappa})$ with $\frac{1}{p} = \sum_{j=1}^m \frac{1}{p_j}$, we have
  $$
  \| \mathcal{A}_\ell^\mathbf{n}(b_{1}^{i_1}, \dots, b_{m}^{i_m}) \|_{L^p(\R^d\setminus\mathcal{E})}^p
  \lesssim \min_{j=1, \dots, m} \min(1, 2^{(n_j+\ell - i_j)(1+\frac{d}{p_j'})}, 2^{i_j - \ell}) \prod_{j=1}^m \|b_j^{i_j}\|_{p_j}^p.
  $$
\end{lemma}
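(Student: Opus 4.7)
The plan is to prove each of the three candidate bounds---$1$, $2^{(n_j + \ell - i_j)(1 + d/p_j')}$, and $2^{i_j - \ell}$---separately, for any fixed index $j \in \{1, \ldots, m\}$; taking the minimum of the resulting estimates over $j$ then yields the claim.

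For the trivial factor $1$, the $L^{p_j}$-boundedness of $P_{n_j + \ell}$, which is uniform in $n_j + \ell$, combined with Proposition~\ref{prop-qbanach esti} applied to the tuple $(P_{n_j+\ell}b_j^{i_j})_{j=1}^m$, yields $\|\mathcal{A}_\ell^{\mathbf{n}}(\vec{b})\|_{L^p}^p \lesssim \prod_j \|b_j^{i_j}\|_{L^{p_j}}^p$. The restriction to $\R^d \setminus \mathcal{E}$ is harmless here.

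For the factor $2^{(n_j + \ell - i_j)(1 + d/p_j')}$, I would exploit the mean-zero property $\int b_{j, \gamma} = 0$ together with the smoothness of the Littlewood--Paley kernel $\psi_{n_j + \ell}$. Writing
$$
P_{n_j + \ell} b_{j, \gamma}(x) = \int_{Q_{j, \gamma}} \bigl[\psi_{n_j + \ell}(x - y) - \psi_{n_j + \ell}(x - y_\gamma)\bigr] b_{j, \gamma}(y)\, dy
$$
and Taylor expanding around the center $y_\gamma$ of $Q_{j, \gamma}$, using $\|\nabla \psi_{n_j + \ell}\|_{\infty} \lesssim 2^{(n_j + \ell)(d+1)}$ together with Schwartz decay, yields the pointwise estimate
$$
|P_{n_j + \ell} b_{j, \gamma}(x)| \lesssim 2^{-i_j}\, 2^{(n_j + \ell)(d+1)} \bigl(1 + 2^{n_j + \ell}|x - y_\gamma|\bigr)^{-N} \|b_{j, \gamma}\|_{L^1}.
$$
Taking the $L^{p_j}$ norm and converting $L^1$ to $L^{p_j}$ by H\"older on $Q_{j, \gamma}$ (which contributes $|Q_{j, \gamma}|^{1/p_j'} = 2^{-i_j d/p_j'}$), together with a standard almost-orthogonality argument to sum over the disjoint cubes, produces $\|P_{n_j + \ell} b_j^{i_j}\|_{L^{p_j}} \lesssim 2^{(n_j + \ell - i_j)(1 + d/p_j')} \|b_j^{i_j}\|_{L^{p_j}}$. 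Plugging this single-slot gain into Proposition~\ref{prop-qbanach esti} and bounding the remaining slots trivially delivers the second factor.

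The third factor $2^{i_j - \ell}$ is relevant precisely when the cube scale $2^{-i_j}$ exceeds the averaging scale $2^{-\ell}$, and here the exclusion $x \in \R^d \setminus \mathcal{E}$ is decisive. Since $\mathcal{E}$ removes all enlarged cubes $C_\mathcal{S} Q_{j, \gamma}$ and $|2^{-\ell}\Theta_j y| \lesssim 2^{-\ell} \operatorname{diam}(\mathcal{S})$ for $y \in \mathcal{S}$, the translate $x - 2^{-\ell}\Theta_j y$ can reach a cube $Q_{j, \gamma}$ only for $y$ lying in a thin subset of $\mathcal{S}$ whose normalized surface measure is bounded by $C\, 2^{i_j - \ell}$. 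Translating this geometric restriction into an effective pointwise control on the $j$-th factor and feeding it through Proposition~\ref{prop-qbanach esti} yields the third estimate.

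The main technical difficulty I expect is the passage from the per-cube cancellation bound to the global bound on $b_j^{i_j}$ in the second step: the kernels $\psi_{n_j+\ell}$ have Schwartz tails that may overlap when the cubes $Q_{j,\gamma}$ are smaller than $2^{-(n_j+\ell)}$, so an almost-orthogonality/maximal-function argument is needed to preserve the full gain $2^{(n_j+\ell - i_j)(1+d/p_j')}$. A secondary subtlety is that the one-slot gains must be inserted into the multilinear Proposition~\ref{prop-qbanach esti} in a way compatible with the constraint $(1/p_1, \ldots, 1/p_m) \in \mathrm{conv}(\mathcal{V}_k)$; this is handled by splitting the $L^{p_j}$ norm as $L^{p_j} = L^{p_j} \cdot L^\infty$ where convenient, or equivalently by bilinear interpolation between trivial and cancellation estimates.
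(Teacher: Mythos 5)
Your treatment of the first two factors follows the same strategy as the paper: the factor $1$ comes from $\|P_{n_j+\ell}\|_{L^{p_j}\to L^{p_j}}\lesssim 1$ plus Proposition~\ref{prop-qbanach esti}, and the factor $2^{(n_j+\ell-i_j)(1+d/p_j')}$ comes from subtracting $\psi_{n_j+\ell}(x-c_Q)$ using the mean-zero property of $b_{j,\gamma}$, estimating the gradient kernel, and H\"older on each cube to trade $L^1(Q)$ for $L^{p_j}(Q)$. You are right to flag that re-assembling the per-cube bounds into a bound on $b_j^{i_j}=\sum_\gamma b_{j,\gamma}$ requires care because the $P_{n_j+\ell}b_{j,\gamma}$ are not disjointly supported; this is the same point the paper glosses over slightly in \eqref{ineq-orthog}.

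The third factor is where your proposal goes wrong. You describe the gain as arising because ``the translate $x-2^{-\ell}\Theta_j y$ can reach a cube $Q_{j,\gamma}$ only for $y$ lying in a thin subset of $\mathcal{S}$ whose normalized surface measure is bounded by $C\,2^{i_j-\ell}$.'' This is not what happens, and in fact it is false in the relevant regime. When $\ell>i_j$ one has $|2^{-\ell}\Theta_j y|\leq 2^{-\ell}\diam(\mathcal{S})<2^{-i_j}\diam(\mathcal{S})=s(Q_{j,\gamma})\diam(\mathcal{S})$, while $x\notin C_\mathcal{S}Q_{j,\gamma}$ with $C_\mathcal{S}=5\max(1,\diam(\mathcal{S}))$ forces $\dist(x,Q_{j,\gamma})\gtrsim 2\max(1,\diam(\mathcal{S}))\,s(Q_{j,\gamma})$. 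Consequently $x-2^{-\ell}\Theta_j y$ stays at distance $\geq s(Q_{j,\gamma})$ from $Q_{j,\gamma}$ for \emph{every} $y\in\mathcal{S}$; there is no ``thin subset,'' the translate simply never comes close to the cube, and no $\sigma$-measure estimate of the type you describe can deliver the gain.

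The mechanism actually used is analytic, not geometric: because the argument of $b_{j,\gamma}$ is always at distance at least $s(Q)=2^{-i_j}$ from $\supp b_{j,\gamma}$, one may replace $P_{n_j+\ell}$ by the truncated operator $\widetilde{P}_{n_j+\ell}$ whose kernel is $\psi_{n_j+\ell}\,\mathbbm{1}_{\{|\cdot|\geq s(Q)\}}$ without changing the value of $P_{n_j+\ell}b_{j,\gamma}(x-2^{-\ell}\Theta_j y)$ for $x\notin\mathcal{E}$. The rapid (Schwartz) decay of $\psi$ then gives $\|\widetilde{P}_{n_j+\ell}\|_{L^1\to L^1}\lesssim 2^{-N(n_j+\ell-i_j)}\leq 2^{i_j-\ell}$ for any $N\geq 1$, since $n_j\geq 0$. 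Feeding this single-slot $L^{p_j}\to L^{p_j}$ bound into Proposition~\ref{prop-qbanach esti} produces the factor $2^{i_j-\ell}$. Your proposal would need to be rewritten along these lines; as stated, the third factor is unproved.
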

For $m=2$ and $p=\frac{1}{2}$, Lemma~\ref{lem-decay for b} is given in \cite{Christ_Zhou2022}. 
The proof for general $m\geq2$ and $p = \frac{\kappa+2}{2(\kappa+1)}$ case is given in similar manner.

\begin{lemma}\label{lem-sum}
Under the same condition in Lemma~\ref{lem-decay for b},
  $$
    \sum_{\ell\in\Z} \min_{i_1, \dots, i_m} \min(1, 2^{(n_j+\ell - i_j)(1+\frac{d}{p_j'})}, 2^{i_j - \ell}) 
    \lesssim |\mathbf{n}| \prod_{j, j', j\not=j'} \min(1, 2^{|\mathbf{n}| - |i_j - i_{j'}|})^{\frac{1}{m(m-1)}}.
  $$
\end{lemma}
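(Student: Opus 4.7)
The plan is to exploit the simple piecewise structure of the single-index factor
$$
F_j(\ell):=\min\bigl(1,\;2^{(n_j+\ell-i_j)(1+d/p_j')},\;2^{i_j-\ell}\bigr),
$$
and to reduce the minimum over $j$ into pair-wise pieces from which the pair-wise decay factors on the right-hand side naturally emerge. Each $F_j$ is dominated by $1$; it grows geometrically on $\ell<i_j-n_j$, equals $1$ on the plateau $[i_j-n_j,\,i_j]$ of length $n_j$, and decays geometrically with ratio $2^{-1}$ on $\ell>i_j$. In particular $\sum_\ell F_j(\ell)\lesssim n_j+1\leq|\mathbf{n}|+1$.

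The central observation is that for every ordered pair $(j,j')$ with $j\neq j'$ one has the elementary bound
$$
\min_{j''}F_{j''}(\ell)\leq \min\bigl(F_j(\ell),F_{j'}(\ell)\bigr).
$$
Summing in $\ell$, taking the minimum over pairs, and then applying the elementary inequality $\min_\alpha a_\alpha\leq(\prod_\alpha a_\alpha)^{1/N}$ with $N=m(m-1)$ yields
$$
\sum_\ell\min_{j''}F_{j''}(\ell)\leq \prod_{j\neq j'}\Bigl(\sum_\ell\min(F_j,F_{j'})(\ell)\Bigr)^{\frac{1}{m(m-1)}}.
$$
Thus the problem reduces to the single pair-wise estimate
$$
\sum_\ell \min(F_j,F_{j'})(\ell)\lesssim |\mathbf{n}|\cdot \min\bigl(1,\,2^{|\mathbf{n}|-|i_j-i_{j'}|}\bigr).
$$

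The pair-wise estimate is proved by a case analysis on the separation $|i_j-i_{j'}|$. When $|i_j-i_{j'}|\leq|\mathbf{n}|$, the plateaus of $F_j$ and $F_{j'}$ lie within distance $|\mathbf{n}|$ of one another, so $\min(F_j,F_{j'})\leq 1$ on an interval of length $\lesssim|\mathbf{n}|$ and decays geometrically on either side, giving a sum of size $O(|\mathbf{n}|)$. When $|i_j-i_{j'}|>|\mathbf{n}|$ the two plateaus are disjoint and the minimum is analysed separately on each of the two plateau regions, the intermediate band between them, and the far tails; on each region one uses the slower tail ratio $2^{-1}$ rather than the steeper $2^{1+d/p_j'}$ to obtain the uniform exponential bound $\lesssim |\mathbf{n}|\cdot 2^{|\mathbf{n}|-|i_j-i_{j'}|}$, independent of the individual exponents $p_j'$.

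The main technical hurdle lies in the disjoint case: one must verify that the intermediate band between the two separated plateaus does not contribute an unwanted factor whose decay rate is merely the square root of the claimed one. This is the reason for working with the pointwise minimum $\min(F_j,F_{j'})$ rather than the more symmetric geometric mean $(F_jF_{j'})^{1/2}$: the latter would yield a crossing-point bound of order $2^{(|\mathbf{n}|-|i_j-i_{j'}|)/2}$, insufficient for the statement, whereas the minimum forces one of the two exponentially small factors to dominate throughout and delivers the full exponential decay $2^{|\mathbf{n}|-|i_j-i_{j'}|}$. Once the pair-wise estimate is in hand, the $|\mathbf{n}|$ factors combine under the $m(m-1)$-th root to yield exactly one linear factor $|\mathbf{n}|$ on the right-hand side, and the pair-wise decay factors assemble with the advertised exponent $1/(m(m-1))$.
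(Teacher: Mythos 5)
Your overall strategy—reducing the $m$-fold minimum to pair-wise sums via $\min_\alpha a_\alpha \leq \bigl(\prod_\alpha a_\alpha\bigr)^{1/N}$, and then analysing the growth/plateau/decay structure of each factor—is the same as the paper's. However, your resolution of the intermediate band, which you correctly single out as the main technical point, is wrong. When $i_2 > i_1 + |\mathbf{n}|$ and $i_1 < \ell < i_2 - |\mathbf{n}|$, one has $\min(F_1,F_2)(\ell) \leq \min\bigl(2^{i_1-\ell},\,2^{a(|\mathbf{n}|-i_2+\ell)}\bigr)$ with $a = 1 + d/p_2'$. Writing $\delta = \ell - i_1$ and $D = i_2 - i_1$, this is $\min\bigl(2^{-\delta},\,2^{-a(D-|\mathbf{n}|-\delta)}\bigr)$; its sum over $0<\delta<D-|\mathbf{n}|$ is governed by the crossing point, where both branches equal $2^{-\frac{a}{1+a}(D-|\mathbf{n}|)}$. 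In the range of Lemma~\ref{lem-decay for b} one has $a \in [1+d/(k+2),\,1+d]$, hence $\frac{a}{1+a} < 1$ strictly, so the band contributes $\approx 2^{\frac{a}{1+a}(|\mathbf{n}|-D)}$, which for $D\gg |\mathbf{n}|$ is \emph{larger} than the asserted $|\mathbf{n}|\,2^{|\mathbf{n}|-D}$. Your assertion that the pointwise minimum ``forces one of the two exponentially small factors to dominate throughout and delivers the full exponential decay $2^{|\mathbf{n}|-|i_j-i_{j'}|}$'' is therefore false: at the crossing point the minimum \emph{equals} both competitors, and that shared value has the exponent $\frac{a}{1+a}(|\mathbf{n}|-D)$, not $|\mathbf{n}|-D$. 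Replacing the steeper rate by the flatter $2^{-1}$ rate, as you suggest at the end, gives exactly the geometric-mean exponent $\tfrac12(|\mathbf{n}|-D)$, the very outcome you wished to avoid; the minimum buys you only the absence of a harmless polynomial factor, not a doubling of the decay rate.

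It is worth noting that the paper's own proof treats only the two plateau intervals $[i_j-|\mathbf{n}|,\,i_j]$ and is silent about the intermediate band and the far tails, so as written the lemma is slightly overstated: the honest conclusion of the pair-wise analysis is $\sum_\ell \min(F_1,F_2)(\ell)\lesssim |\mathbf{n}|\,\min\bigl(1,\,2^{c(|\mathbf{n}|-|i_1-i_2|)}\bigr)$ for some $c\in(\tfrac12,1)$ depending on $d,k$. That weaker form is all that the downstream step in the proof of Lemma~\ref{lem-qba esti} requires, since $\sum_{i_l}\min\bigl(1,2^{c(|\mathbf{n}|-|i_j-i_l|)}\bigr)^{p_j/(m(m-1))}\lesssim |\mathbf{n}|$ holds for any fixed $c>0$. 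So the structure of your proof (and the paper's) is sound, but the intermediate-band step must be replaced by the crossing-point computation above, and the exponent $1$ on the right-hand side of the lemma should be relaxed to $c<1$.
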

By using Lemmas~\ref{lem-decay for b} and \ref{lem-sum}, we have
\begin{align*}
  \| \mathcal{M}_\mathbf{n}(b_1, \dots, b_m) \|_{L^p(\R^d\setminus\mathcal{E})}^p
  &\lesssim \sum_{i_1 \in \Z} \cdots \sum_{i_m \in\Z} \sum_{\ell \in\Z} \| \mathcal{A}_\ell^\mathbf{n}(b_{1}^{i_1}, \dots, b_{m}^{i_m}) \|_{L^p(\R^d\setminus\mathcal{E})}^p\\
  &\lesssim \sum_{i_1, \dots, i_m}  |\mathbf{n}| \prod_{j, j', j\not=j'} \min(1, 2^{|\mathbf{n}| - |i_j - i_{j'}|})^{\frac{p}{m(m-1)}} \prod_{j=1}^m \|b_j^{i_j}\|_{p_j}^p.
\end{align*}
We apply H\"older's inequality to the last line and obtain
\begin{align*}
  \| \mathcal{M}_\mathbf{n}(b_1, \dots, b_m) \|_{L^p(\R^d\setminus\mathcal{E})}^p
  \lesssim |\mathbf{n}| \prod_{j=1}^m \Bigl( \sum_{i_1, \dots, i_m} \prod_{l\not=j} \min(1, 2^{|\mathbf{n}| - |i_j - i_{l}|})^{\frac{p_j}{m(m-1)}} ~ \|b_j^{i_j}\|_{p_j}^{p_j} \Bigr)^{\frac{p}{p_j}}.
\end{align*}
Oberserve that the summation over $i_1, \dots,i_{j-1}, i_{j+1},\dots, i_m$ yields
$$
  \sum_{i_j\in\Z} |\mathbf{n}|^{m-1} \|b_j^{i_j}\|_{p_j}^{p_j}.
$$
It is because we have $\sum_{i_l} \min(1, 2^{|\mathbf{n}| - |i_j - i_{l}|})^{\frac{p_j}{m(m-1)}} \lesssim |\mathbf{n}|$ since 
\begin{eqnarray*}
  \min(1, 2^{|\mathbf{n}| - |i_j - i_{l}|}) = \left\{
    \begin{array}{lll}
      2^{|\mathbf{n}| + i_j - i_l}, &i_l>i_j+|\mathbf{n}|;\\
      1, &i_j-|\mathbf{n}|\leq i_l \leq i_j+|\mathbf{n}|;\\
      2^{|\mathbf{n}| - i_j + i_l}, &i_l<i_j - |\mathbf{n}|.
    \end{array}
  \right.
\end{eqnarray*}
Therefore $\| \mathcal{M}_\mathbf{n}(b_1, \dots, b_m) \|_{L^p(\R^d\setminus\mathcal{E})}$ is bounded by a constant multiple of
\begin{align}\label{ineq-mn bad esti}
  |\mathbf{n}|^{\frac{m}{p}} \prod_{j=1}^m \Bigl( \sum_{i_j\in\Z}\|b_j^{i_j}\|_{p_j}^{p_j} \Bigr)^{\frac{1}{p_j}}
  =|\mathbf{n}|^{\frac{m}{p}} \prod_{j=1}^m \|b_j\|_{p_j}.
\end{align}
With help of \eqref{ineq-mn bad esti}, we finally estimate the levelset of $\mathcal{M}_\mathbf{n}(b_1, \dots, b_m)$
\begin{align*}
  \meas\Bigl(\{x : \mathcal{M}_{\mathbf{n}}(b_1, \dots, b_m)(x) > 2^{-m}\lambda\}\Bigr)
  &\lesssim \lambda^{-p} \| \mathcal{M}_{\mathbf{n}}(b_1, \dots, b_m)\|_{L^p(\R^d\setminus\mathcal{E})}^p\\
  &\leq \lambda^{-p} |\mathbf{n}|^{m} \prod_{j=1}^m \|b_j\|_{p_j}^p.
\end{align*}
Since $\|b_j\|_{p_j} \lesssim 1$, we obtain \eqref{ineq-levelset esti} for $b_1, \dots, b_m$.

\subsubsection{Estimates for other terms}

The cases $\mathcal{M}_\mathbf{n}(g_1, \dots, g_m)$, $\mathcal{M}_\mathbf{n}(g_1, \dots,g_{m-1}, b_m )$, $\dots$, $\mathcal{M}_\mathbf{n}(b_1, \dots, b_{m-1}, g_m)$ follow from simplified arguments given in \ref{sssec-bad}. 
We first consider the cases except for $\mathcal{M}_{\mathbf{n}}(g_1, \dots, g_m)$.
Thus we define for $\alpha + \beta = m$ and $1\leq \alpha, \beta \leq m-1$
\begin{align}
  \mathcal{M}_{\mathbf{n}}(\mathbf{g}^\alpha, \mathbf{b}^{\beta})
  =\mathcal{M}_{\mathbf{n}}(g_1, \dots, g_\alpha, b_{\alpha+1}, \dots, b_m).
\end{align}
Note that one can modify the proofs of Lemmas~\ref{lem-decay for b}, \ref{lem-sum} to obtain $\mathbf{b}^\beta$-analogue. 
Let  $(0, \dots, 0, \frac{1}{r_{\alpha+1}}, \dots, \frac{1}{r_m}) \in \text{conv}(\mathcal{V}_{\kappa})$ with $\frac{1}{p} = \sum_{\nu=\alpha+1}^m \frac{1}{r_\nu}$.
Then the proofs of Lemma~\ref{lem-decay for b} yields that
\begin{equation}\label{ineq-b beta 1}
\begin{aligned}
  &\| \mathcal{A}_\ell^\mathbf{n}(\mathbf{g}^\alpha, \mathbf{b}^\beta) \|_{L^p(\R^d\setminus\mathcal{E})}\\
  &\lesssim \prod_{\mu=1}^\alpha \|g_\mu\|_{L^\infty} \min_{\nu=\alpha+1, \dots, m} \min(1, 2^{(n_\nu+\ell - i_\nu)(1+\frac{d}{r_\nu'})}, 2^{i_\nu - \ell}) \prod_{\nu=\alpha+1}^m \|b_\nu^{i_\nu}\|_{r_\nu}.
\end{aligned}
\end{equation}
We have 
\begin{equation}\label{ineq-b beta 2}
\begin{aligned}
    &\sum_{\ell\in\Z} \min_{i_{\alpha+1}, \dots, i_m} \min(1, 2^{(n_\nu+\ell - i_\nu)(1+\frac{d}{r_\nu'})}, 2^{i_\nu - \ell}) \\
    &\lesssim |\mathbf{n}| \prod_{\alpha+1 \leq \nu, \nu'\leq m, j\not=j'} \min(1, 2^{|\mathbf{n}| - |i_\nu - i_{\nu'}|})^{\frac{1}{\beta(\beta-1)}}.
\end{aligned}
\end{equation}
With help of \eqref{ineq-b beta 1} and \eqref{ineq-b beta 2}, we estimate $\|\mathcal{M}_{\mathbf{n}}(\mathbf{g}^\alpha, \mathbf{b}^{\beta})\|_{L^p(\R^d \setminus \mathcal{E})}$ as following:
\begin{align}\label{ineq-Mn beta}
  \|\mathcal{M}_{\mathbf{n}}(\mathbf{g}^\alpha, \mathbf{b}^{\beta})\|_{L^p(\R^d \setminus \mathcal{E})}
  \lesssim |\mathbf{n}|^{\frac{\beta}{p}} \prod_{\mu=1}^\alpha \|g_\mu\|_{L^\infty} \times \prod_{\nu=\alpha+1}^m \|b_\nu\|_{L^{r_\nu}}.
\end{align}
Since $\supp(b_\nu) \subset \cup_{\gamma}Q_{\nu, \gamma}$ and $\sum_{\gamma} \meas(Q_{\nu, \gamma}) \lesssim \lambda^{-p}$ due to \eqref{CZ-bad}, \eqref{CZ-bad esti}, 
the right-hand side of \eqref{ineq-Mn beta} is bounded by a constant multiple of 
\begin{align}\label{ineq-230220 1436}
  |\mathbf{n}|^\frac{\beta}{p} \lambda^{\sum_{\mu=1}^\alpha \frac{p}{p_j}} \times \lambda^{-p(\sum_{\nu=\alpha+1}^m \frac{1}{r_\nu} - \frac{1}{p_\nu})}
  = |\mathbf{n}|^\frac{\beta}{p}.
\end{align}
Here the left-hand side of \eqref{ineq-230220 1436} is a consequence of H\"older's inequality on $\|b_\nu\|_{L^{r_\nu}}$.
Finally, by making use of \eqref{ineq-Mn beta} and \eqref{ineq-230220 1436}, we have
\begin{align*}
  \meas\Bigl(\{x : \mathcal{M}_{\mathbf{n}}(\mathbf{g}^\alpha, \mathbf{b}^\beta)(x) > 2^{-m}\lambda\}\Bigr)
  &\lesssim \lambda^{-p} \| \mathcal{M}_{\mathbf{n}}(\mathbf{g}^\alpha, \mathbf{b}^\beta)\|_{L^p(\R^d\setminus\mathcal{E})}^p\\
  &\leq \lambda^{-p} |\mathbf{n}|^{\beta}.
\end{align*}

For $\mathcal{M}_n(g_1, \dots, g_m)$, we simply choose $C < 2^{-1}$ in \eqref{CZ-height} so that
$$
  |\mathcal{M}_n(g_1, \dots, g_m)| \leq \prod_{j=1}^m \|g_j\|_{L^\infty} \leq C^m \lambda < 2^{-m}\lambda.
$$
Thus we have 
$$
\meas\Bigl(\{x : \mathcal{M}_{\mathbf{n}}(g_1, \dots, g_m)(x) > 2^{-m}\lambda\}\Bigr) = 0.
$$

\subsubsection{Proof of Lemma~\ref{lem-decay for b}}

  For simplicity, let $n_j+\ell = \tau$ and $b_j^{i_j} =b = \sum_{Q} b_Q$ where $Q = Q_{j, \gamma}$ whose sidelength is $2^{-i}$.
  Then thanks to Proposition~\ref{prop-qbanach esti}, it suffices for the first and second term in the minimum to show that
  \begin{align}
    \| P_\tau b_Q \|_{p} \lesssim \min(1, (2^\tau s(Q))^{1+\frac{d}{p'}}) \| b_Q\|_p.
  \end{align}
  The first term, $1$, is directly given by the fact that $\|\psi_\tau\|_{1}=1$ and Young's inequality.

  For the second term, we make use of the vanishing property of $b_Q$.
  Let $c_Q$ be the center of $Q$.
  \begin{align*}
    P_\tau b_Q(x) 
    &= \int_{\R^d} (\psi_\tau(x-y) - \psi_\tau(x-c_Q)) b_Q(y)~~\mathrm{d}y\\
    &= \int_{\R^d} \int_0^1 \langle \nabla_y(\psi_\tau)(x-c_Q - t(y-c_Q)), y-c_Q \rangle~~\mathrm{d}t~ b_Q(y)~~\mathrm{d}y.
  \end{align*}
  Since $\psi$ is of Schwartz class, it follows that $\frac{1}{|y-c_Q|}\int_0^1 \langle \nabla_y(\psi_\tau)(x-c_Q - t(y-c_Q)), y-c_Q \rangle~~\mathrm{d}t$ is bounded by a constant multiple of
  $$
    \frac{2^{\tau(d+1)}}{(1+2^\tau|x-c_Q - t(y-c_Q)|)^N},
  $$
  for any $ N>0$.
  Thus we apply Minkowski's integral inequality  to obtain
  \begin{align*}
    \| P_\tau b_Q\|_{L^p(\R^d)}
    &\lesssim \Big(\int_{\R^d} \frac{2^{\tau(d+1)p}}{(1+2^\tau|x|)^{pN}}~~\mathrm{d}x \Big)^{\frac{1}{p}}\times \int_{Q} |y-c_Q||b_Q(y)|~~\mathrm{d}y \\
    &\leq 2^{\tau(d+1)}2^{-\tau \frac{d}{p}} s(Q) s(Q)^{d-\frac{d}{p}} \|b_Q\|_p.
  \end{align*}
  This establishes
  $$
  \| P_\tau b_Q\|_{L^p(\R^d)} \lesssim 2^{\tau(1+\frac{d}{p'})} s(Q)^{1+\frac{d}{p'}} \|b_Q\|_p.
  $$
  Therefore we have
  \begin{equation}\label{ineq-orthog}
  \begin{aligned}
    \| P_\tau b\|_{L^p(\R^d)} 
    &= \Bigl(\sum_Q \|P_\tau b_Q\|_p^p\Bigr)^{\frac{1}{p}}\\
    &\lesssim (2^{\tau}s(Q))^{1+\frac{d}{p'}} \Bigl(\sum_Q \|b_Q\|_p^p\Bigr)^{\frac{1}{p}} = (2^{\tau}s(Q))^{1+\frac{d}{p'}} \|b\|_p.
  \end{aligned}
  \end{equation}
  The first and the last equalities follow from the disjointness of $Q$'s. This gives a decay estimate when $n_j+\ell<i_j$.

  Lastly, we assume that $\ell>i_j$ so that for $x\in (C_\mathcal{S}Q)^\complement$ and $z\in Q$, we have $\dist(x-2^{-\ell}y, z) \geq s(Q)$ uniformly in $y\in\mathcal{S}$, because we choose $C_\cS = 5 \max(1, \diam(\cS))$.
  Thus it follows that
  \begin{align*}
    P_{n_j+\ell}b_{j,\gamma}^{i_j}(x-2^{-\ell}\Theta_j y) = \widetilde{P}_{n_j+\ell}b_{j,\gamma}^{i_j}(x-2^{-\ell}\Theta_j y),
  \end{align*}
  where the kernel of $\widetilde{P}_\tau$ is given by 
  $$
    \psi_\tau(y) \mathbbm{1}_{|y|\geq s(Q)}(y).
  $$
  Therefore, we obtain with help of \eqref{ineq-orthog} that 
  \begin{align*}
    \| \mathcal{A}_\ell^\mathbf{n}(b_{1}^{i_1}, \dots, b_{m}^{i_m}) \|_{L^p(\R^d\setminus\mathcal{E})}
    \lesssim \min_{j=1, \dots, m} \|\widetilde{P}_{n_j +\ell} \|_{p\to p} \prod_{j=1}^m \|b_{j}^{i_j}\|_{p_j}.
  \end{align*}
  Observe that for any $N>0$
  \begin{align*}
    \|\widetilde{P}_{n_j +\ell} \|_{p\to p} \leq  \int_{\R^d} |\psi_{n_j+\ell - i_j}(x)| \mathbbm{1}_{|x|\geq 1}(x)~~\mathrm{d}x \lesssim 2^{-N(n_j+\ell -i_j)}.
  \end{align*}
  Thus we have $\|\widetilde{P}_{n_j +\ell} \|_{p\to p} \leq 2^{-\ell +i_j}$ regardless of $n_j\geq0$.
  This proves the lemma.

\subsubsection{Proof of Lemma~\ref{lem-sum}}

It suffices to show that for any $i_1$, $i_2$ 
\begin{equation}\label{ineq-min sum}
\begin{aligned}
    \sum_{\ell\in\Z} \min_{i_1, i_2} \min(1, 2^{(n_j+\ell - i_j)(1+\frac{d}{p_j'})}, 2^{i_j - \ell}) 
    \lesssim |\mathbf{n}| \min(1, 2^{|\mathbf{n}| - |i_1 - i_2|}).
\end{aligned}
\end{equation}
Note that 
\begin{eqnarray*}
  \min(1, 2^{(n_j+\ell - i_j)(1+\frac{d}{p_j'})}, 2^{i_j - \ell})  \leq \left\{
    \begin{array}{lll}
      2^{i_j -\ell}, &i_j< \ell;\\
      1, &i_j-|\mathbf{n}|\leq \ell \leq i_j;\\
      2^{(|\mathbf{n}| -i_j + \ell)(1+\frac{d}{p_j'})}, &\ell<i_j - |\mathbf{n}|.
    \end{array}
  \right.
\end{eqnarray*}
When $i_1 \sim i_2$, then the left side of \eqref{ineq-min sum} is bounded by a costant multiple of $|\mathbf{n}|$.
Thus we consider the case of $i_2$ is greater than $i_1+|\mathbf{n}|$.
Since $i_2 >i_1+|\mathbf{n}|$, it follows that for $i_1-|\mathbf{n}| \leq \ell \leq i_1$
\begin{align}\label{i2 i1 1}
  \min_{i_1, i_2} \min(1, 2^{(n_j+\ell - i_j)(1+\frac{d}{p_j'})}, 2^{i_j - \ell}) \leq 2^{(|\mathbf{n}| -i_2 +\ell)(1+\frac{d}{p_j'})}\leq 2^{(|\mathbf{n}| -|i_1 - i_2|)(1+\frac{d}{p_j'})}.
\end{align}
Similarly, we have for $i_2-|\mathbf{n}| \leq \ell \leq i_2$
\begin{align}\label{i2 i1 2}
  \min_{i_1, i_2} \min(1, 2^{(n_j+\ell - i_j)(1+\frac{d}{p_j'})}, 2^{i_j - \ell}) \leq 2^{i_1 -\ell}\leq 2^{|\mathbf{n}| -|i_1 - i_2|}.
\end{align}
One can obtain the same bounds when $i_1>i_2 +|\mathbf{n}|$ by changing roles of $i_1, i_2$ in \eqref{i2 i1 1}, \eqref{i2 i1 2}.
Therefore we conclude that
$$
  \sum_{\ell\in\Z} \min_{i_1, i_2} \min(1, 2^{(n_j+\ell - i_j)(1+\frac{d}{p_j'})}, 2^{i_j - \ell}) 
  \lesssim |\mathbf{n}| \min(1, 2^{|\mathbf{n}| - |i_1 - i_2|}).
$$

\subsection{Proof of Lemma~\ref{lem-smoothing}}
  By the assumption \eqref{ineq_reg_cM'}, for $1\leq r_1, \dots, r_m<\infty$ with $1=\sum_{j=1}^m \frac{1}{r_j}$, we have
  \begin{align}
    \| \mathcal{A}_\mathcal{S}^\Theta(P_{n_1}f_1, \dots, P_{n_m}f_m) \|_{L^1}
    \lesssim 2^{-\delta|\mathbf{n}|} \prod_{j=1}^m \|P_{n_j} f_j \|_{L^{r_j}},\quad \delta>0.
  \end{align}
  We make use of the following scaling invariance of $\mathcal{A}_\mathcal{S}^\Theta$:
  \begin{align*}
    \Big\| \int_{\mathcal{S}} \prod_{j=1}^m P_{\ell + n_j}f_j(x - 2^{-\ell}\Theta_j y)~~\mathrm{d}\sigma(y) \Big\|_{L^1(\mathrm{d}x)}
    =2^{-\ell d} \| \mathcal{A}_\mathcal{S}^\Theta(P_{n_1}f_{1,-\ell}, \dots, P_{n_m}f_{m,-\ell}) \|_{L^1},
  \end{align*}
  where $f_{j,-\ell}(x) = f_j(2^{-\ell}x)$.
  Then it follows that 
  \begin{align*}
    \| \mathcal{S}_{\mathbf{n}}(\rF)\|_{L^1} 
    &\leq \sum_{\ell\in\Z} 2^{-\ell d} \| \mathcal{A}_\mathcal{S}^\Theta(P_{n_1}f_{1,-\ell}, \dots, P_{n_m}f_{m,-\ell}) \|_{L^1}\\
    &\lesssim \sum_{\ell\in\Z} 2^{-\ell d} 2^{-\delta|\mathbf{n}|} \prod_{j=1}^m \| P_{n_j}f_{j, -\ell} \|_{L^{r_j}}\\
    &= \sum_{\ell\in\Z} 2^{-\ell d} 2^{-\delta|\mathbf{n}|} \prod_{j=1}^m 2^{\frac{\ell d}{r_j}}\| P_{n_j+\ell}f_{j} \|_{L^{r_j}}\\
    &= \sum_{\ell\in\Z}  2^{-\delta|\mathbf{n}|} \prod_{j=1}^m \| P_{n_j+\ell}f_{j} \|_{L^{r_j}}.
  \end{align*}
  We apply H\"older's inequality to the last line and obtain
  \begin{align*}
    \| \mathcal{S}_{\mathbf{n}}(\rF)\|_{L^1} 
    \lesssim 2^{-\delta|\mathbf{n}|} \prod_{j=1}^m \Bigl(\sum_{\ell\in\Z}\| P_{n_j+\ell}f_{j} \|_{L^{r_j}}^{r_j}\Bigr)^{\frac{1}{r_j}}.
  \end{align*}
  Note that $(\sum_j \|P_jf\|_p^p)^{1/p} \lesssim \|f\|_p$ for $p\geq2$, which gives
  $$
    \| \mathcal{S}_{\mathbf{n}}(\rF)\|_{L^1} \lesssim 2^{-\delta|\mathbf{n}|} \prod_{j=1}^m \|f_j\|_{L^{r_j}}.
  $$
  This proves the lemma.

  \section{Proof of Theorem~\ref{thm-lac-cM}}

Recall that for an $(md-1)$-dimensional hypersurface $\Sigma$ in $\R^{md}$ with $\kappa$ non-vanishing principal curvatures and $\kappa>(m-1)d$, we define $\textsl{A}_{\Sigma}(\rF)(x)$ as following:
\begin{align}
  \int_{\Sigma}\prod_{j=1}^m f_j(x - y_j)~~\mathrm{d}\sigma_\Sigma(\textsl{y}),\quad \textsl{y} = (y_1, \dots, y_m) \in \R^{md}.
\end{align}

By making use of the dyadic decomposition of Section~\ref{sec_thm_lac} satisfying \eqref{proj<ell}, \eqref{proj_ell}, \eqref{proj_identity}, and \eqref{proj_mlinear_id},
we define the following quantities similar to \eqref{A_ell_F}, \eqref{M_n_F}, \eqref{S_n_F}:
\begin{align}
	 \textsl{A}_{\ell}^{\alpha,\tau}(\rF)(x)
    &:=\int_{\Sigma} \Bigl(\prod_{i=1}^\alpha P_{<\ell}f_{\tau(i)}(x-2^{-\ell}y_{\tau(i)}) \Bigr) \Bigl( \prod_{i=\alpha+1}^m f_{\tau(i)} (x - 2^{-\ell}y_{\tau(i)})\Bigr)~~\mathrm{d}\sigma(\textsl{y}),\label{scrA_ell_F}\\
    \widetilde{\textsl{A}}_{\ell}^{\alpha,\tau}(\rF)(x)
    &:=\int_{\Sigma} \Bigl(\prod_{i=1}^\alpha f_{\tau(i)}(x-2^{-\ell}y_{\tau(i)}) \Bigr) \Bigl( \prod_{i=\alpha+1}^m  P_{<\ell}f_{\tau(i)} (x - 2^{-\ell}y_{\tau(i)})\Bigr)~~\mathrm{d}\sigma(\textsl{y}),\\
	 \textsl{M}_{\bf{n}}(\rF)
    &:=\sup_{\ell\in\mathbb{Z}} \Big|\int_{\Sigma} \prod_{j=1}^m P_{\ell+n_j} f_j(x - 2^{-\ell}y_j)~\mathrm{d}
    \sigma(\textsl{y}) \Big|,\label{scrM_n_F}\\
	 \textsl{S}_{\bf{n}}(\rF)
    &:=\sum_{\ell\in\mathbb{Z}}\Big|\int_{\Sigma} \prod_{j=1}^m P_{\ell+n_j} f_j(x - 2^{-\ell} y_j)~\mathrm{d}
    \sigma(\textsl{y}) \Big|.\label{scrS_n_F}
\end{align}
Therefore, the lacunary maximal operator $\mathfrak{M}_{\Sigma}$ is bounded by a constant mutiple of 
\begin{align}\label{ineq_230303_1415}
  \sum_{\alpha=1}^m \sum_{\tau\in S_m} \sup_{\ell\in\Z}( |\textsl{A}_\ell^{\alpha,\tau}(\rF)| + | \widetilde{\textsl{A}}_{\ell}^{\alpha,\tau}(\rF)|) + \sum_{\mathbf{n}\in \mathbb{N}_0^m}  \textsl{M}_{\mathbf{n}}(\rF),\quad \mathbb{N}_0 = \mathbb{N} \cup \{0\}.
\end{align}
As in the previous section, instead of the first summation in \eqref{ineq_230303_1415} it suffices to consider estimates for $\textsl{A}_\ell^{\alpha}(\rF)$, which is given by
$$
	\textsl{A}_{\ell}^{\alpha}(\rF)(x)
    :=\int_{\Sigma} \Bigl(\prod_{j=1}^\alpha P_{<\ell}f_{j}(x-2^{-\ell}y_{j}) \Bigr) \Bigl( \prod_{j=\alpha+1}^m f_{j} (x - 2^{-\ell}y_{j})\Bigr)~~\mathrm{d}\sigma(\textsl{y}).
$$
Then the proof will be completed by combination of the following lemmas and an induction argument which is slightly different from the argument in Section~\ref{sec_thm_lac}:
\begin{lemma}\label{lem_m2_frakM}
  Let $F = (f_1, f_2, 1, \dots, 1)$ and $\alpha=1$. 
  Then we have
  $$
     \textsl{A}_\ell^\alpha(\rF)(x) \leq M_{HL}(f_1)(x) \times {M}_{\Sigma}(f_2)(x),
  $$
  where $ {M}_{\Sigma}(f)(x) = \sup_{\ell\in\Z}|\int_{\Sigma} f(x-2^{-\ell}y_2)~\mathrm{d}\sigma(\textsl{y})|$.
\end{lemma}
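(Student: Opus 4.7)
The plan is to mimic the argument used for Lemma~\ref{lem-m2}, adapted to the surface $\Sigma\subset\R^{md}$ and the one-coordinate evaluation that appears in $\textsl{A}_\ell^1$. Setting $\alpha=1$ and $F=(f_1,f_2,1,\dots,1)$ in \eqref{scrA_ell_F} reduces the quantity to
\[
 \textsl{A}_\ell^1(\rF)(x)=\int_{\Sigma} P_{<\ell}f_1(x-2^{-\ell}y_1)\, f_2(x-2^{-\ell}y_2)\,\mathrm{d}\sigma_\Sigma(\textsl{y}),
\]
so the task is to control the first factor pointwise in $y_1$ by $M_{HL}(f_1)(x)$ and then pull it out of the integral.

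First, I would write
\[
P_{<\ell}f_1(x-2^{-\ell}y_1)=\int_{\R^d} f_1(z)\,2^{\ell d}\phi\!\bigl(2^\ell(x-2^{-\ell}y_1-z)\bigr)\,\mathrm{d}z
=\int_{\R^d} f_1(x+2^{-\ell}w)\,\phi(y_1-w)\,\mathrm{d}w,
\]
by translating and rescaling the variable of integration. Since $\Sigma$ is compact in $\R^{md}$, its projection $\pi_1(\Sigma)=\{y_1:(y_1,\dots,y_m)\in\Sigma\}$ onto the first $d$ coordinates is also compact, so $|y_1|$ is uniformly bounded on the domain of integration. Using the Schwartz decay of $\phi$, one has for every $N>0$
\[
|\phi(y_1-w)|\lesssim (1+|w|)^{-N}\qquad\text{uniformly for }y_1\in\pi_1(\Sigma),
\]
whence
\[
\bigl|P_{<\ell}f_1(x-2^{-\ell}y_1)\bigr|\lesssim \int_{\R^d}|f_1(x+2^{-\ell}w)|\,(1+|w|)^{-N}\,\mathrm{d}w \lesssim M_{HL}(f_1)(x).
\]

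With this uniform bound in hand, I can factor out $M_{HL}(f_1)(x)$ from the integral over $\Sigma$ and obtain
\[
|\textsl{A}_\ell^1(\rF)(x)|\leq M_{HL}(f_1)(x)\int_{\Sigma} |f_2(x-2^{-\ell}y_2)|\,\mathrm{d}\sigma_\Sigma(\textsl{y})\leq M_{HL}(f_1)(x)\,M_{\Sigma}(f_2)(x),
\]
where the last inequality is just the definition of $M_\Sigma$ after replacing $f_2$ by $|f_2|$ (absolute values may be inserted inside the defining integral by positivity). I do not anticipate any real obstacle: the only mildly nontrivial point is ensuring that the Schwartz bound is uniform over the relevant coordinate slice of $\Sigma$, which is immediate from compactness of $\Sigma$ and hence of $\pi_1(\Sigma)$. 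The same argument works coordinate-wise if one wished to extend beyond $F=(f_1,f_2,1,\dots,1)$, but the stated version suffices for the induction step needed later.
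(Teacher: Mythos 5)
Your proof is correct and follows exactly the argument the paper intends: the authors state that the proof is identical to that of Lemma~\ref{lem-m2}, and you have carried out precisely that adaptation — rescaling the kernel of $P_{<\ell}$, exploiting Schwartz decay together with the boundedness of the first-coordinate slice $\pi_1(\Sigma)$ of the compact surface $\Sigma$ to dominate $|P_{<\ell}f_1(x-2^{-\ell}y_1)|$ by $M_{HL}(f_1)(x)$, and then factoring it out of the $\Sigma$-integral to recover $M_\Sigma(f_2)$.
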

The proof of Lemma~\ref{lem_m2_frakM} is same with that of Lemma~\ref{lem-m2}, so we omit it.
Note that $M_{HL}, {M}_{\Sigma}$ are bounded on $L^p$ for $p\in(1, \infty]$, 
hence we need the boundedness of the second term in \eqref{ineq_230303_1415}.

\begin{lemma}\label{lem-qba esti_scrM}
  Let $\mathbf{n} \in \mathbb{N}^m$ and $\frac{m+1}{2} \leq \frac{1}{p} < \frac{2d+\kappa}{2d}$.
  For $p_j \in [1,2], j=1\cdots,m$ with $\sum_{j=1}^m \frac{1}{p_j} = \frac{1}{p}$, we have
  $$
    \|  \textsl{M}_{\mathbf{n}}(\rF) \|_{L^{p, \infty}} \leq C(1+|\mathbf{n}|^{m}) \prod_{j=1}^m \|f_j\|_{L^{p_j}}.
  $$
\end{lemma}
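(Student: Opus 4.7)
The plan is to follow the scheme used in proving Lemma~\ref{lem-qba esti} almost verbatim, with Proposition~\ref{prop_qbesti_cM} substituted for Proposition~\ref{prop-qbanach esti} wherever the quasi-Banach estimate was invoked. After normalizing $\|f_j\|_{p_j}=1$, I would reduce matters to the weak-type level set estimate
$$
\meas\bigl(\{x : \textsl{M}_\mathbf{n}(\rF)(x) > \lambda\}\bigr) \lesssim |\mathbf{n}|^m \lambda^{-p}.
$$
Apply a multilinear Calder\'on-Zygmund decomposition at height $C\lambda^{p/p_j}$ to each $f_j$, writing $f_j = g_j + b_j$ with $b_j = \sum_\gamma b_{j,\gamma}$ supported on disjoint dyadic cubes $Q_{j,\gamma}$ satisfying the analogues of \eqref{CZ-height}--\eqref{CZ-cancel}. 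Setting $\mathcal{E} = \bigcup_{j,\gamma} C_0 Q_{j,\gamma}$ with $C_0 = 5\max(1, \diam(\Sigma))$ gives $\meas(\mathcal{E}) \lesssim \lambda^{-p}$, reducing the problem to the $2^m$ mixed good/bad contributions on $\R^d \setminus \mathcal{E}$.

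For the all-good term $\textsl{M}_\mathbf{n}(g_1,\dots,g_m)$, choosing $C$ sufficiently small makes the corresponding level set empty. For the mixed terms $\textsl{M}_\mathbf{n}(\mathbf{g}^\alpha, \mathbf{b}^\beta)$, I would extract the $L^\infty$ bounds on the $\alpha$ good factors and apply the quasi-Banach estimate \eqref{ineq_qba_cM} to the remaining $\beta$-linear piece in the bad factors, exactly as in the argument leading to \eqref{ineq-Mn beta} and \eqref{ineq-230220 1436}. This yields a level-set contribution of order $|\mathbf{n}|^\beta \lambda^{-p}$, which is absorbed into the target bound.

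The main task is the all-bad term $\textsl{M}_\mathbf{n}(b_1,\dots,b_m)$. Mirroring Section~\ref{sec-qba n smthng}, I would decompose $b_j = \sum_{i_j\in\Z} b_j^{i_j}$ according to the side-lengths of supporting cubes and write
$$
|\textsl{M}_\mathbf{n}(b_1,\dots,b_m)|^p \leq \sum_{i_1,\dots,i_m,\ell} |\textsl{A}_\ell^\mathbf{n}(b_1^{i_1},\dots,b_m^{i_m})|^p,
$$
where $\textsl{A}_\ell^\mathbf{n}$ is the $\Sigma$-analogue of $\mathcal{A}_\ell^\mathbf{n}$. The two critical inputs are the $\Sigma$-analogue of Lemma~\ref{lem-decay for b},
$$
\|\textsl{A}_\ell^\mathbf{n}(b_1^{i_1},\dots,b_m^{i_m})\|_{L^p(\R^d \setminus \mathcal{E})}^p \lesssim \min_{j} \min\bigl(1,\, 2^{(n_j+\ell-i_j)(1+d/p_j')},\, 2^{i_j-\ell}\bigr) \prod_{j} \|b_j^{i_j}\|_{p_j}^p,
$$
and the purely combinatorial summation estimate of Lemma~\ref{lem-sum}. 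The cancellation-moment calculation producing the factor $2^{(n_j+\ell-i_j)(1+d/p_j')}$ relies only on the Schwartz kernels $\psi_\tau$ and the mean-value theorem applied to each $b_{j,\gamma}$, so it transfers unchanged. The complementary factor $2^{i_j-\ell}$ uses only the fact that $x\in\R^d\setminus\mathcal{E}$ and $y\in\Sigma$ force $\dist(x-2^{-\ell}y_j, Q_{j,\gamma}) \gtrsim s(Q_{j,\gamma})$, which is immediate from compactness of $\Sigma$. The base quasi-Banach estimate $\|\textsl{A}_\ell^\mathbf{n}\|_{L^p} \lesssim \prod_j\|\cdot\|_{p_j}$ used to anchor the minimum is supplied by \eqref{ineq_qba_cM} of Proposition~\ref{prop_qbesti_cM}. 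Combining these with Lemma~\ref{lem-sum} (applied unchanged) and H\"older's inequality in $(i_1,\dots,i_m)$ gives $\|\textsl{M}_\mathbf{n}(b_1,\dots,b_m)\|_{L^p(\R^d\setminus\mathcal{E})}^p \lesssim |\mathbf{n}|^m$.

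The principal obstacle I anticipate is exponent bookkeeping: unlike the $\mathrm{conv}(\mathcal{V}_k)$ region for $\mathcal{A}_\mathcal{S}^\Theta$, the admissible range in Proposition~\ref{prop_qbesti_cM} is the strip $p_j \in [1,2]$, $\sum 1/p_j \in [\frac{m+1}{2}, \frac{2d+k}{2d})$. At each application — both in the mixed cases and in the H\"older summation over $(i_1,\dots,i_m)$ — I will need to verify that the interpolation exponents remain inside this strip. Since the strip is open on the upper side and the set of admissible tuples has nonempty interior, this can be arranged by a small perturbation of the exponents, and the three level-set contributions combine to give the claimed weak-type estimate.
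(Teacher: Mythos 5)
Your proposal is correct and takes essentially the same route as the paper, which simply notes that the proof of Lemma~\ref{lem-qba esti} carries over with Proposition~\ref{prop_qbesti_cM} in place of Proposition~\ref{prop-qbanach esti}, the only point to check being that the $\Sigma$-analogue of Lemma~\ref{lem-decay for b} holds, which follows from compactness of $\Sigma$. You have merely spelled out that remark in more detail, including the correct observation that the cancellation estimate and the far-field kernel decay are insensitive to whether the average is taken over a hypersurface in $\R^d$ acting via rotations $\Theta_j$ or over a hypersurface in $\R^{md}$ acting via the projections $y \mapsto y_j$.
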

The proof of Lemma~\ref{lem-qba esti_scrM} is to repeat the proof of Lemma~\ref{lem-qba esti}. 
The only difference occurs in showing Lemma~\ref{lem-decay for b} in terms of $ \textsl{A}_\ell^\mathbf{n}$ which corresponds to $\mathcal{A}_\ell^\mathbf{n}$, 
since $ \textsl{A}_\ell^\mathbf{n}$ is an average over $\Sigma$ which is $(md-1)$-dimensional and each $f_j$ depends on $x-y_j$ not $x-\Theta_j y$.
This difference is harmless, however, because only the compactness of $\cS$ does matter in the proof of Lemma~\ref{lem-decay for b} and $\Sigma$ is a compact hypersurface.
On the other hand, the range $\frac{m+1}{2} \leq \frac{1}{p} < \frac{2d+\kappa}{2d}$ follows from Proposition~\ref{prop_qbesti_cM}.

\begin{lemma}\label{lem-smoothing_scrS}
  Let $\mathbf{n} \in \mathbb{N}^m$ and $1= \sum_{j=1}^m \frac{1}{r_j}$. Then it holds that
  $$
    \| \textsl{S}_{\mathbf{n}}(\rF)\|_{L^1} \lesssim 2^{-\delta|\mathbf{n}|} \prod_{j=1}^m \|f_j\|_{L^{r_j}}.
  $$
\end{lemma}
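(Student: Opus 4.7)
I would follow the template of the proof of Lemma~\ref{lem-smoothing}, substituting the averaging operator $\textsl{A}_{\Sigma}$ for $\mathcal{A}_{\mathcal{S}}^{\Theta}$ throughout and using the two endpoint estimates furnished by Proposition~\ref{prop_qbesti_cM}.

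First I would apply the Sobolev smoothing estimate \eqref{ineq_reg_cM} to frequency-localized inputs. Since $\|P_{n}g\|_{B^{-k/p}_{p}} \lesssim 2^{-kn/p}\|P_{n}g\|_{L^{p}}$, one obtains a bound of the form
$$
\|\textsl{A}_{\Sigma}(P_{n_1}f_1,\dots,P_{n_m}f_m)\|_{L^{2}} \lesssim 2^{-c_1|\mathbf{n}|}\prod_{j=1}^{m}\|P_{n_j}f_j\|_{L^{p_j}},
$$
where $p_j\geq 2$ and $\sum_j 1/p_j = 1/2$ (for instance, the symmetric choice $p_j = 2m$) and $c_1>0$. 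Next I would apply the quasi-Banach estimate \eqref{ineq_qba_cM} at an exponent $1/p_0\in[(m+1)/2,(2d+k)/(2d))$ to obtain
$$
\|\textsl{A}_{\Sigma}(P_{n_1}f_1,\dots,P_{n_m}f_m)\|_{L^{p_0}} \lesssim \prod_{j=1}^{m}\|P_{n_j}f_j\|_{L^{q_j}},
$$
with $q_j\in[1,2]$ and $\sum_j 1/q_j = 1/p_0$ (e.g.\ the symmetric choice $q_j = 2m/(m+1)$ at the endpoint $1/p_0=(m+1)/2$).

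Interpolating between these two estimates at parameter $\theta = 1/m$ (which is admissible since $p_0 = 2/(m+1)<1$ for $m\geq 2$), I would land at an $L^{1}$ bound
$$
\|\textsl{A}_{\Sigma}(P_{n_1}f_1,\dots,P_{n_m}f_m)\|_{L^{1}} \lesssim 2^{-\delta|\mathbf{n}|}\prod_{j=1}^{m}\|P_{n_j}f_j\|_{L^{r_j}},
$$
with $\delta = c_1(1-\theta)>0$ and $\sum_j 1/r_j = 1$, at least for $(1/r_1,\dots,1/r_m)$ in a neighborhood of the symmetric point $(1/m,\dots,1/m)$. In particular $r_j\geq 2$ when $r_j$ is near $m\geq 2$, which will be needed below.

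Finally I would implement the scaling step exactly as in Lemma~\ref{lem-smoothing}: a change of variables yields
$$
\Big\|\int_{\Sigma}\prod_{j=1}^{m}P_{\ell+n_j}f_j(x-2^{-\ell}y_j)\,\mathrm{d}\sigma_\Sigma(\textsl{y})\Big\|_{L^{1}} = 2^{-\ell d}\,\|\textsl{A}_{\Sigma}(P_{n_1}f_{1,-\ell},\dots,P_{n_m}f_{m,-\ell})\|_{L^{1}},
$$
where $f_{j,-\ell}(x)=f_j(2^{-\ell}x)$. Applying the interpolated $L^{1}$ estimate, using $\|P_{n_j}f_{j,-\ell}\|_{L^{r_j}} = 2^{\ell d/r_j}\|P_{n_j+\ell}f_j\|_{L^{r_j}}$, and summing over $\ell$, the factors $2^{-\ell d}\prod_j 2^{\ell d/r_j}$ cancel thanks to $\sum_j 1/r_j = 1$. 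A final application of H\"older in $\ell$ together with the Littlewood--Paley square function estimate $\bigl(\sum_\ell \|P_\ell f\|_{L^{r_j}}^{r_j}\bigr)^{1/r_j}\lesssim \|f\|_{L^{r_j}}$, valid for $r_j\geq 2$, then closes the argument.

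\textbf{Main obstacle.} The delicate point is verifying that the interpolation delivers strictly positive decay $\delta>0$ at a target $(r_1,\ldots,r_m)$ for which all $r_j\geq 2$. This relies on Proposition~\ref{prop_qbesti_cM} providing a quasi-Banach range with $1/p_0>1/2$, which in turn requires the non-degeneracy condition $k>(m-1)d$ assumed in the lemma; for $m\geq 2$ the symmetric choices above give $\theta = 1/m\in(0,1)$ and $r_j = m\geq 2$, so the interpolation parameters and Littlewood--Paley hypotheses are all simultaneously met.
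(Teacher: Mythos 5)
Your proposal is correct and takes essentially the same route as the paper: the paper's own proof of Lemma~\ref{lem-smoothing_scrS} explicitly states that it is identical to the proof of Lemma~\ref{lem-smoothing}, with \eqref{ineq_reg_cM} and \eqref{ineq_qba_cM} from Proposition~\ref{prop_qbesti_cM} substituted for the two endpoint estimates, and then the same interpolation, scaling, and Littlewood--Paley steps close the argument. You have merely spelled out the interpolation parameter $\theta=1/m$, the resulting $r_j=m\geq 2$, and the positivity of the decay constant a bit more carefully than the paper does.
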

The proof of Lemma~\ref{lem-smoothing_scrS} is the same with that of Lemma~\ref{lem-smoothing} together with \eqref{ineq_reg_cM}, so we omit it.

Due to $ \textsl{M}_\mathbf{n} \leq  \textsl{S}_\mathbf{n}$ by definitions \eqref{scrM_n_F}, \eqref{scrS_n_F}, it follows from interpolation between Lemmas~\ref{lem-qba esti_scrM} and \ref{lem-smoothing_scrS} that
\begin{align}\label{ineq_230406_1923}
  \|  \textsl{M}_\mathbf{n}(\rF) \|_{L^p} \lesssim 2^{-\delta'|\mathbf{n}|} \prod_{j=1}\|f_j\|_{L^{p_j}},\quad \delta'>0,
\end{align}
whenever $\frac{1}{p_1}+\cdots+\frac{1}{p_m}=\frac{1}{p} <\frac{2d+\kappa}{2d}$.
It should be noted that Lemmas~\ref{lem-qba esti_scrM} and \ref{lem-smoothing_scrS} are still valid for $F = (f_1, \dots, f_N, 1, \dots, 1)$ with $m$ replaced by $N$ and taking $L^\infty$ norms for $1$'s.
That is, for $\rF=(f_1, \dots, f_N, 1,\dots, 1)$ we have
\begin{align}
	 \|  \textsl{M}_{\mathbf{n}}(\rF) \|_{L^{p, \infty}} &\leq C(1+|\mathbf{n}|^{N}) \prod_{j=1}^N \|f_j\|_{L^{p_j}},\quad \frac{1}{p} = \frac{1}{p_1} + \cdots \frac{1}{p_N},\\
	  \| \textsl{S}_{\mathbf{n}}(\rF)\|_{L^1} &\lesssim 2^{-\tilde{\delta}|\mathbf{n}|} \prod_{j=1}^N \|f_j\|_{L^{r_j}},\quad 1 = \frac{1}{r_1} + \cdots \frac{1}{r_N},
\end{align}
for some $\tilde{\delta}>0$.
Thus instead of \eqref{ineq_230406_1923}, we have for $F= (f_1, \dots, f_N, 1, \dots, 1)$
$$
	\|  \textsl{M}_\mathbf{n}(\rF) \|_{L^p} \lesssim 2^{-\tilde{\delta}'|\mathbf{n}|} \prod_{j=1}^N \|f_j\|_{L^{p_j}},\quad \text{for some $\tilde{\delta}'>0$}.
$$
Since $2^{-\delta'|\mathbf{n}|}$ is summable over $\mathbf{n}\in\mathbb{N}_0^{m}$, this together with Lemma~\ref{lem_m2_frakM} proves the theorem for $F= (f_1, f_2, 1, \dots, 1)$.

For the induction, we assume that Theorem~\ref{thm-lac-cM} holds for $F= (f_1, \dots, f_N, 1, \dots, 1)$ for $N=2, \dots, m-1$ with $1/p=1/p_1 +\cdots+1/p_N$.
Note that we have shown $N=2$-case.
By the assumption, we have the following lemma:
\begin{lemma}\label{lem-induction_frakM}
  For $\alpha=1, \dots, m$, we have
  $$
     \textsl{A}_\ell^\alpha(\rF)(x) 
    \lesssim 
    \prod_{\mu=1}^\alpha M_{HL}(f_\mu)(x) \times 
    \sup_{\ell\in\Z}\Big|\int_{\Sigma} \prod_{\nu=\alpha+1}^m f_\nu(x - 2^{-\ell} y_\nu)~\mathrm{d}\sigma_\Sigma(\textsl{y})\Big|.
  $$
  Moreover, if we assume that Theorem~\ref{thm-lac-cM} is true for $F= (f_1, \dots, f_N, 1, \dots, 1)$ with $N=2, \dots, m-1$ and $1/p=1/p_1 +\cdots+1/p_N$, then it follows that
  $$
  \sup_{\ell\in\Z}\Big|\int_{\Sigma} \prod_{\nu=\alpha+1}^m f_\nu(x - 2^{-\ell} y_\nu)~~\mathrm{d}\sigma_\Sigma(\textsl{y})\Big|
  $$
  satisfies multilinear estimates of Theorem~\ref{thm-lac-cM} for $(m-\alpha)$-linear operators.
\end{lemma}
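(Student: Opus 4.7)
The plan is to prove both assertions by direct analogy with the argument used for Lemma~\ref{lem-induction} in Section~\ref{sec_thm_lac}: the pointwise bound will follow from repeating the one-step estimate behind Lemma~\ref{lem_m2_frakM}, and the multilinear bound on the residual supremum will follow immediately from the induction hypothesis of Theorem~\ref{thm-lac-cM}.

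For the pointwise estimate, I would pull the ``low frequency'' factors $P_{<\ell}f_\mu(x-2^{-\ell}y_\mu)$ for $\mu = 1, \dots, \alpha$ out of the integral that defines $\textsl{A}_\ell^\alpha(\rF)(x)$. Writing $P_{<\ell}f_\mu(x-2^{-\ell}y_\mu)$ as a convolution against $\phi_\ell$, rescaling, and using that $y_\mu$ is constrained to the compact surface $\Sigma$, the Schwartz decay of $\phi$ yields
\[
\sup_{y_\mu \in \Sigma}\, \bigl|P_{<\ell}f_\mu(x-2^{-\ell}y_\mu)\bigr| \;\lesssim\; M_{HL}(f_\mu)(x),
\]
with constant independent of $\ell$. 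This is precisely the one-step argument carried out in the proof of Lemma~\ref{lem_m2_frakM}, now iterated over $\mu = 1, \dots, \alpha$. Pulling the $\alpha$ factors outside and then taking $\sup_\ell$ of what remains produces the claimed pointwise inequality.

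For the second assertion, observe that
\[
\sup_{\ell\in\Z}\Big|\int_{\Sigma} \prod_{\nu=\alpha+1}^m f_\nu(x - 2^{-\ell} y_\nu)\,\mathrm{d}\sigma_\Sigma(\textsl{y})\Big| = \mathfrak{M}_\Sigma(\widetilde{F})(x),
\]
where $\widetilde{F}$ has $1$ in the first $\alpha$ slots and $f_{\alpha+1}, \dots, f_m$ in the last $m-\alpha$ slots. Since $m - \alpha \le m-1$, this is exactly an $(m-\alpha)$-linear instance of Theorem~\ref{thm-lac-cM}, which is covered by the inductive hypothesis on $N = 2, \dots, m-1$. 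Applying the hypothesis to $\widetilde{F}$ delivers the required multilinear bounds for this supremum.

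Since both parts reduce to tools already in hand, there is no substantive technical obstacle. The only subtle point to verify is that the inductive framework really accommodates tuples of the form $(1,\dots,1,f_{\alpha+1},\dots,f_m)$; this is guaranteed by the remark made after \eqref{ineq_230406_1923}, where Lemmas~\ref{lem-qba esti_scrM} and \ref{lem-smoothing_scrS} were noted to remain valid with trivial slots, so that the whole induction on $N$ proceeds coherently.
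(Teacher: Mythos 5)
Your proposal is correct and follows essentially the same route as the paper: the pointwise bound is obtained by pulling the low-frequency factors $P_{<\ell}f_\mu(x-2^{-\ell}y_\mu)$ out of the integral via the Schwartz-tail argument of Lemma~\ref{lem_m2_frakM} (iterated over $\mu=1,\dots,\alpha$), and the residual supremum is recognized as $\mathfrak{M}_\Sigma$ acting on the tuple $(1,\dots,1,f_{\alpha+1},\dots,f_m)$, to which the inductive hypothesis applies thanks to the remark after \eqref{ineq_230406_1923}. One small point worth noting, as it is left implicit in both your write-up and the paper: when $\alpha=m-1$ (so $m-\alpha=1$) the residual supremum is the linear lacunary maximal operator $M_\Sigma$, which is bounded on $L^p$ for $1<p\le\infty$ but is not literally an instance of the inductive hypothesis $N=2,\dots,m-1$; and when $\alpha=m$ the supremum is a constant, so both edge cases are immediate but should be acknowledged.
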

\begin{proof}
The first assertion of the lemma follows directly by the proof of Lemma~\ref{lem-m2}.
For the second assertion, recall that $|\widehat{\mathrm{d}\sigma_\Sigma}(\xi)|\lesssim (1+|\xi|)^{-\kappa/2}$ for $(m-1)d<\kappa\leq md-1$.
Theorem~\ref{thm-lac-cM} holds for $(m-\alpha)$-linear maximal averages when $\kappa>(m-\alpha-1)d$, which is already affirmative.
Thus the assertion holds from the assumption that Theorem~\ref{thm-lac-cM} is true for $F= (f_1, \dots, f_N, 1, \dots, 1)$ with $N=2, \dots, m-1$ and $1/p=1/p_1 +\cdots+1/p_N$.
\end{proof}
Since we already proved Lemmas~\ref{lem-qba esti} and \ref{lem-smoothing} for general $m$, Theorem~\ref{thm-lac-cM} for $m$-linear operators holds under the assumption that $N=2, \dots, m-1$ cases hold.
This closes the induction hence proves the theorem.

We end this section by suggesting the proof of Remark~\ref{bilinearspherical}.
\begin{proof}[Proof of Remark \ref{bilinearspherical}]
	Note that, for dimension $d=1$, proof of this remark is already given in \cite{Christ_Zhou2022}.
	Although the proof for $d\geq 2$  case is given in \cite{Bo_Fo2023}, we present a different proof  by exploiting ideas of \cite{Christ_Zhou2022} to prove Remark \ref{bilinearspherical} in higher dimensions $d\geq2$. 
	%which can work for general compact hypersurface with nonvanishing $k$ principal curvatures.
	In fact, the proof follows from Theorem~\ref{thm-lac-cM} with minor modifications in the Lemma \ref{lem-qba esti_scrM} and \ref{lem-smoothing_scrS}. Indeed, note that in \cite{Iosevich} the authors proved $L^{1}\times L^{1}\rightarrow L^{1/2}$ estimate of the bilinear spherical average $\textsl{A}^{1}_{\mathbb{S}^{2d-1}}$. Using this estimate in Lemma \ref{lem-qba esti_scrM} we get 
	 $$
	\|  \textsl{M}_{\mathbf{n}}(\rF) \|_{L^{\frac{1}{2}, \infty}} \leq C(1+|\mathbf{n}|^{2}) \prod_{j=1}^2 \|f_j\|_{L^{1}}.
	$$
	Further, using the estimate in Lemma \ref{lem-smoothing_scrS} with $\Sigma=\mathbb{S}^{2d-1}$, we get 
	 $$
	\| \textsl{S}_{\mathbf{n}}(\rF)\|_{L^1} \lesssim 2^{-\delta|\mathbf{n}|} \prod_{j=1}^m \|f_j\|_{L^{2}}
	$$ for some $\delta>0$. The rest of the proof follows by imitating the machinery of Theorem \ref{thm-lac-cM}.
\end{proof}

\section*{Acknowledgement}

All three authors have been partially supported by NRF grant no. 2022R1A4A1018904 funded by the Korea government(MSIT) .
They are supported individually by NRF no. RS-2023-00239774(C. Cho), no. 2021R1C1C2008252(J. B. Lee), and BK21 Postdoctoral fellowship of Seoul National University(K. Shuin).
The authors are sincerely grateful to referees for their comments which make this article more readable.

\end{document}